\title[Abelian instances of nonabelian symplectic reduction]{Abelian instances of nonabelian symplectic reduction}  
\author[A.\ Bravo-Doddoli]{Alejandro\ Bravo-Doddoli} 
\address{Alejandro Bravo: Department of Mathematics, University of Michigan, Ann Arbor, MI 48109, U.S.}
\email{Abravodo@umich.edu}
\author[L.C.\ Garc\'ia-Naranjo]{Luis\ C.\ Garc\'ia-Naranjo} 
\address{Dipartimento di Matematica “Tullio Levi-Civita”, Universit\`a di Padova,
Via Trieste 63, 35121 Padova, Italy}
\email{luis.garcianaranjo@math.unipd.it}
\author[E.\ Rigato]{Enzo\ Rigato} 
\address{Dipartimento di Matematica “Tullio Levi-Civita”, Universit\`a di Padova,
Via Trieste 63, 35121 Padova, Italy}
\email{enzo.rigato@studenti.unipd.it}
\keywords{symplectic reduction, coadjoint orbits, metabelian groups, Carnot groups}
\date{May 2026}
\pgfplotsset{width=11cm,compat=1.9}
\DeclareMathOperator{\B}{B}
\DeclareMathOperator{\ad}{ad}
\DeclareMathOperator{\Ad}{Ad}
\DeclareMathOperator{\SE}{SE}
\DeclareMathOperator{\SO}{SO}
\DeclareMathOperator{\T}{T}
\DeclareMathOperator{\Z}{Z}
\DeclareMathOperator{\C}{C}
\DeclareMathOperator{\poj}{p}
\DeclareMathOperator{\V}{V}
\def\R{\mathbb{R}}
\def\J{\mathcal{J}}
\numberwithin{figure}{section}
\def\ma{metabelian }
\newtheorem{theorem}{Theorem}
\newtheorem{thm}{Theorem}
\newtheorem{proposition}[thm]{Proposition}
\newtheorem{lemma}[thm]{Lemma}
\newtheorem{definition}[thm]{Definition}
\theoremstyle{definition}
\newtheorem{remark}{Remark}
\newcommand{\Laz}{\mathfrak{z}}
\newcommand{\Lag}{\mathfrak{g}}
\newcommand{\Laa}{\mathfrak{a}}
\newcommand{\Lah}{\mathfrak{h}}
\newcommand{\Ag}{\mathbb{A}}
\newcommand{\G}{\mathbb{G}}
\newcommand{\Hg}{\mathbb{H}}
\newcommand{\Lagr}{\mathfrak{g}_{\mathrm{\tiny reg}}^*}
\newcommand{\dslash}{/\!\!/}
\begin{document}

\begin{abstract}

 Let $\G$ be a Lie group with a normal abelian subgroup $\Ag$, and let $(M,\omega)$ be a symplectic manifold endowed with a Hamiltonian $\G$-action. We investigate conditions under which symplectic reduction by $\G$ coincides with the symplectic reduction by the abelian subgroup $\Ag$. 
Using the reduction-by-stages framework (Marsden et al Springer Notes in Math., 1913, (2007)\cite{stages}), we prove that, under a mild assumption, the corresponding reduced spaces are symplectomorphic if and only if they have the same dimension. Both this assumption and the dimension condition depend only on the groups $\G$ and $\Ag$, and on the momentum value
$\mu\in \Lag^*$ at which the symplectic reduction by $\G$ is performed; in particular, they are independent of the symplectic manifold $(M,\omega)$. 
We then provide a broad class
of examples by identifying a 
large family of nilpotent Lie groups, including classical Carnot groups such as the Heisenberg group and jet-space $\J^k(\R^n,\R^m)$, for which the two reduced spaces are symplectomorphic for  generic momentum values.

\end{abstract}


\maketitle

\section{Introduction}

Let  $\G$ be a Lie group with  
a regular, normal, abelian subgroup $\Ag$. 
Suppose that $\G$ acts in a Hamiltonian fashion
on a symplectic manifold $(M,\omega)$. This
action can be restricted to a Hamiltonian
action of the abelian subgroup $\Ag$.
In this paper, we investigate conditions that lead to the equivalence
of the symplectic reduction by the (generally nonabelian) full Lie group $\G$ and the 
abelian subgroup $\Ag$. Our inspiration was the observation that this equivalence holds for generic momentum values for a large number of low-dimensional nilpotent groups (see Table \ref{Table:examples} below).

\subsection{Main contributions} Denote by $\Lag$
and $\Laa$ the  Lie algebras of $\G$ and $\Ag$, 
and by
$i:\Laa \hookrightarrow \Lag$ the natural inclusion map. Let 
$i^*:\Lag^*\to \Laa^*$ be the corresponding dual map (which
is the restriction of elements of $\Lag^*$ to $\Laa$). We prove and present a wide class of examples of the following.

\begin{theorem}
\label{th:main-new}
    Let $\G$ be a Lie group with  a regular, normal, and abelian subgroup $\Ag$. Suppose $\G$ acts freely and properly in a Hamiltonian fashion on
    the symplectic manifold $(M,\omega)$ with equivariant
    momentum map $J_\G:M\to \Lag^*$. Let $\mu\in J_\G(M)\subset \Lag^*$ and suppose  the coadjoint isotropy subgroup $\G_\mu$ is  connected. Then, there is a symplectic diffeomorphism between 
    the symplectic reduced spaces $M\dslash_\mu\G$ and $M\dslash_{i^*(\mu)}\Ag$ if and only if they have the same dimension. 
\end{theorem}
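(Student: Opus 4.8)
The easy implication is immediate: a symplectic diffeomorphism is in particular a diffeomorphism, so the two reduced spaces must have the same dimension. The substance is the converse, and the plan is to manufacture an explicit symplectomorphism from the inclusion of momentum level sets. Write $\nu := i^*(\mu)\in\Laa^*$. Since the restricted momentum map is $J_\Ag = i^*\circ J_\G$, equivariance gives $J_\G^{-1}(\mu)\subseteq J_\Ag^{-1}(\nu)$, and more precisely $J_\Ag^{-1}(\nu) = J_\G^{-1}(\mu + \Laa^\circ)$, where $\Laa^\circ = \ker i^*$ is the annihilator of $\Laa$ in $\Lag^*$. Composing the inclusion $\iota\colon J_\G^{-1}(\mu)\hookrightarrow J_\Ag^{-1}(\nu)$ with the quotient $\pi_\Ag\colon J_\Ag^{-1}(\nu)\to M\dslash_\nu\Ag$ yields $\Phi := \pi_\Ag\circ\iota$, and I want $\Phi$ to descend through $\pi_\G\colon J_\G^{-1}(\mu)\to M\dslash_\mu\G$ to the desired $\bar\Phi\colon M\dslash_\mu\G\to M\dslash_\nu\Ag$. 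For this to even make sense one needs $\G_\mu\subseteq\Ag$, so that $\G_\mu$-orbits sit inside $\Ag$-orbits; extracting this from the dimension hypothesis is the first real step.

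First I would draw the structural consequence of the equal-dimension assumption. By freeness, $\dim(M\dslash_\mu\G) = \dim M - \dim\G - \dim\G_\mu$ and $\dim(M\dslash_\nu\Ag) = \dim M - 2\dim\Ag$, so the gap is $\Delta = \dim\G + \dim\G_\mu - 2\dim\Ag$. The key point is that the isotropy $\G_\nu$ of $\nu$ for the $\G$-action on $\Laa^*$ has Lie algebra equal to the $\omega_\mu$-orthogonal complement of $\Laa$, where $\omega_\mu(X,Y) := \langle\mu,[X,Y]\rangle$: since $\Laa$ is an ideal, the infinitesimal action of $X$ on $\nu$ is $Y\mapsto -\langle\mu,[X,Y]\rangle$ on $\Laa$. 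The standard formula for the orthogonal of a subspace under a possibly degenerate skew form (whose radical here is $\Lag_\mu$) gives $\dim\G_\nu = \dim\G - \dim\Ag + \dim(\Ag\cap\G_\mu)$, and substituting into $\Delta$ produces
\begin{equation*}
\Delta = \bigl(\dim\G_\nu - \dim\Ag\bigr) + \bigl(\dim\G_\mu - \dim(\Ag\cap\G_\mu)\bigr).
\end{equation*}
Both summands are nonnegative, since $\Ag\subseteq\G_\nu$ (an abelian group acts trivially on its own $\Laa^*$) and $\Ag\cap\G_\mu\subseteq\G_\mu$. Hence $\Delta=0$ forces both to vanish; in particular $\dim\G_\mu = \dim(\Ag\cap\G_\mu)$, i.e. $\Lag_\mu\subseteq\Laa$, and since $\G_\mu$ is assumed connected this upgrades to $\G_\mu\subseteq\Ag$.

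With $\G_\mu\subseteq\Ag$ in hand, $\Phi$ is $\G_\mu$-invariant and descends to $\bar\Phi$. I would verify it is symplectic by the usual pullback identity: writing $\iota_\G,\iota_\Ag$ for the inclusions of the level sets into $M$ and using $\iota_\G = \iota_\Ag\circ\iota$ together with $\bar\Phi\circ\pi_\G = \pi_\Ag\circ\iota$, the defining relations $\pi_\bullet^*\omega_{\mathrm{red}} = \iota_\bullet^*\omega$ give $\pi_\G^*\bar\Phi^*\omega_{\mathrm{red}}^\Ag = \pi_\G^*\omega_{\mathrm{red}}^\G$, and injectivity of $\pi_\G^*$ finishes it. Injectivity of $\bar\Phi$ is clean: if $m,m'\in J_\G^{-1}(\mu)$ lie in one $\Ag$-orbit, say $m' = a\cdot m$, then $\mu = J_\G(m') = \Ad^*_{a^{-1}}\mu$ forces $a\in\G_\mu$, so $[m] = [m']$ already in $M\dslash_\mu\G$.

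The step that looks hardest is surjectivity, since a priori $\bar\Phi$ is only an open embedding (a symplectic, hence immersive, injection between equidimensional manifolds). The resolution is to exploit that $\Laa$ is an \emph{abelian} ideal, so $\ad_Y^2 = 0$ for $Y\in\Laa$; consequently the $\Ag$-coadjoint orbit of $\mu$ is the affine subspace $\Ag\cdot\mu = \mu + \{\ad_Y^*\mu : Y\in\Laa\}\subseteq\mu + \Laa^\circ$. Its dimension is $\dim\Ag - \dim(\Ag\cap\G_\mu)$, which by the first step equals $\dim\G - \dim\Ag = \dim\Laa^\circ$; being an affine subspace of $\mu + \Laa^\circ$ of full dimension, it is all of $\mu + \Laa^\circ$. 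Hence every $p\in J_\Ag^{-1}(\nu) = J_\G^{-1}(\mu + \Laa^\circ)$ can be carried by some $a\in\Ag$ into $J_\G^{-1}(\mu)$, which is precisely surjectivity of $\bar\Phi$. A bijective symplectic local diffeomorphism is a symplectomorphism, so this would complete the proof. The two places demanding genuine care are the nonnegativity factorization of $\Delta$ (which pins down $\G_\mu\subseteq\Ag$) and the affine-orbit identity (which delivers surjectivity with no properness or connectedness input).
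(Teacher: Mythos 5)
Your proposal is correct, and its skeleton is the same as the paper's: establish $\G_\mu\subset\Ag$ from the dimension count, establish that the $\Ag$-coadjoint orbit of $\mu$ fills all of $\mu+\Laa^\circ$ (the ``stages hypothesis''), and then push the inclusion of level sets down to the quotients. The two places where you diverge are worth recording. First, where the paper proves $\Lag_\mu\subset\Laa$ by showing $\Laa$ is a \emph{maximal isotropic} subspace of $\Omega_\mu(W_1,W_2)=\langle\mu,[W_1,W_2]\rangle$, you use the rank formula for the $\Omega_\mu$-orthogonal of $\Laa$ to split the dimension defect into two nonnegative summands; and where the paper packages surjectivity of the orbit map as surjectivity of the adjoint $\T_\mu^*$ of an auxiliary operator, you count the dimension of the affine orbit $\exp(\Laa)\cdot\mu=\mu-\{\ad_Y^*\mu:Y\in\Laa\}$ directly. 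These are the same linear algebra in different clothing. Second, and more substantively, your ending is genuinely more economical: the paper devotes the longest portion of its proof to constructing an explicit inverse $\phi$ via local sections of $\pi_\nu$ and smooth lifts through $\Ag\to\Ag/\G_\mu$, and verifying that $\phi$ is well defined and smooth; you instead observe that a symplectic map is an immersion, that an immersive bijection between manifolds of equal dimension is a diffeomorphism, and that equal dimension is exactly the standing hypothesis. This shortcut is valid and cleanly exploits the specific hypothesis of the theorem, whereas the paper's explicit inverse is inherited from the more general reduction-by-stages machinery where the dimensions need not agree. One point to keep explicit in a final write-up: the containment $\Ag\cdot\mu\subset\mu+\Laa^\circ$ uses that $\Ad_a|_{\Laa}=\mathrm{id}$ for all $a\in\Ag$ (conjugation is trivial on an abelian group), not merely that $\Laa$ is abelian at the infinitesimal level; with that noted, the affine-orbit identity holds for all of $\Ag$, connected or not.
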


The condition that the reduced symplectic spaces have the same dimension is obviously necessary, so our contribution is
to  show
that it is sufficient. 
This condition only depends on $\G$, $\Ag$, and
$\mu$, and is independent of the symplectic manifold $(M,\omega)$, as follows from Eq.~\eqref{eq:dim-cond} below.

Our proof, presented in Section \ref{s:EqSympRed},
is a consequence of the reduction-by-stages theory developed by Marsden et al.~\cite{Marsden1998,stages}, together with the work of Mykytyuk and Stepin~\cite{Mykytyuk2008663}, which establishes the so-called ``stages hypothesis". The key observation is that  the hypotheses of Theorem~\ref{th:main-new}
imply that the second reduction step in the reduction-by-stages framework is trivial. We also present a direct proof of Theorem~\ref{th:main-new} in Appendix~\ref{app}, 
adapting the constructions in~\cite{stages} to our setting, and
independently establishing the stages hypothesis,
to make our paper self-contained.


Our treatment of examples relies on a technical reformulation of Theorem~\ref{th:main-new}, presented as Proposition~\ref{the:main-1} in Section~\ref{ss:alt-thm}.
This enables us to show that Theorem \ref{th:main-new} applies, for generic $\mu\in \Lag^*$, to a class of metabelian nilpotent Lie groups $\G$ that we call $\Ag$-simple (Definition \ref{def:A-simple}). This class contains several low-dimensional nilpotent groups (see Table \ref{Table:examples}), the Heisenberg groups of arbitrary dimension, and the jet spaces $\J^k(\R^n,\R^m)$. The corresponding result on the symplectic equivalence of the reduced spaces $M\dslash_\mu\G$ and $M\dslash_{i^*(\mu)}\Ag$
for $\Ag$-simple metabelian nilpotent Lie groups is stated and proved in Theorem~\ref{th:generic} of Section~\ref{sss:A-sim}.

\begin{table}[!h]
\begin{center}
    \begin{tabular}{| p{0.5cm} | p{0.8cm} | p{2.5cm} | p{2.5cm} |  p{4.2cm} |}
    \hline
    \tiny{R/D} & 4 & 5 & 6 & 7 \\ \hline
    2 & $N_{4,2}$ & $N_{5,1}$, $N_{5,2,1}$, $N_{5,2,2}$, \fbox{ $N_{5,2,3}$} & $N_{6,1,1}$,   \fbox{$N_{6,1,2}^*$}, $N_{6,1,3}$, $N_{6,2,1}$, \fbox{$N_{6,2,2}^*$}, $N_{6,2,5}$, $N_{6,2,5a}$, $N_{6,2,7}$ & $2457L$, $2457L1$, $2457M$, $23457A$, \fbox{$2347B^*$}, \fbox{$2347C^*$}, \fbox{$12457H^*$}, \fbox{$12457L^*$}, \fbox{$12457L1^*$}, $12457A$ 
     \\ \hline
    3 &  & $N_{5,3,2}$& $N_{6,1,4}$, $N_{6,2,3}$, $N_{6,2,4}$, $N_{6,2,6}$, $N_{6,2,8}$, $N_{6,2,9}$, $N_{6,2,9a}$, $N_{6,2,10}$, $N_{6,3,1}$, $N_{6,3,1a}$, $N_{6,3,2}$, $N_{6,3,3}$, $N_{6,3,4}$, $N_{6,3,5}$, \fbox{$N_{6,3,6}$}, $N_{6,4,4a}$ &  $357A$, $357B$, $247A$, $247B$, $247C$, \fbox{$247D$}, \fbox{$247E$}, \fbox{$247E1$}, \fbox{$247F$}, \fbox{$247F1$}, \fbox{$247G$}, \fbox{$247H$}, \fbox{$247H1$}, \fbox{$247I$}, \fbox{$247J$}, \fbox{$247K$}, $247N$, $247P$, $247P1$, $2457A$, $2457B$, $147D$, $147E$, $147E1$,    \\ \hline
    4 &  & $N_{5,3,1}$ & & $37A$, $37B$, $37B1$, $37C$, $37D$, \fbox{$37D1$}, $257B$, $137A$, $137A1$, $137C$ \\
    \hline
    5 & & & & $27A$, $27B$ \\ \hline
     6 & & & & $17$   \\ \hline
    \end{tabular}
    \vspace{0.2cm}
    \caption{All the stratified Lie algebras up to dimension $7$ and nilpotent Lie algebras up to dimension $6$ were classified in \cite{Cornucopia}. They are listed in the table, where the first column and the first row indicate the rank and dimension of the group, respectively. Our theory applies to those groups whose Lie algebras are {\em not} contained within a box; we refer to them as $\mathbf{\mathbb{A}}$-simple (see Definition \ref{def:A-simple}). Those which are not metabelian have an asterisk in addition to the box.}
 \label{Table:examples}
\end{center}
\end{table}

\subsection{The dimension condition and an example}
The assumption that the action of $\G$ on $M$ is free 
implies that the momentum maps $J_\G:M\to \Lag^*$ and 
$J_\Ag:M\to \Laa^*$ are submersions. As a consequence, 
$\dim J_\G^{-1}(\mu)=\dim M- \dim \G$ and $\dim J_\Ag^{-1}(i^*(\mu))=\dim M- \dim \Ag$. Therefore, 
$$
\dim (M\dslash_\mu\G)= \dim (J_\G^{-1}(\mu)/\G_\mu)=\dim M - \dim \G - \dim \G_\mu,
$$
and, since $\Ag$ is abelian, 
$$
\dim (M\dslash_{i^*(\mu)}\Ag) =\dim (J_\Ag^{-1}(i^*(\mu))/\Ag)=\dim M - 2\dim \Ag. 
$$
Therefore, the condition that the symplectic reduced spaces have the same dimension is
    \begin{equation}
    \label{eq:dim-cond}
    \dim \G+\dim \G_\mu=2\dim \Ag.
    \end{equation}

In Sections~\ref{s:sem-pro} and~\ref{s:nilpotent} we present  examples of  Lie groups $\G$, possessing
a normal abelian subgroup $\Ag$, such that the above condition holds for generic momentum values $\mu\in \Lag^*$. In contrast, note that 
Eq.~\eqref{eq:dim-cond}  holds for the exceptional value $\mu=0$
only in 
the extreme case in which $\G$ and $\Ag$
have the same dimension. 

 The condition \eqref{eq:dim-cond} can be restated in terms
    of the dimension of the coadjoint orbit $\mathcal{O}_\mu$ 
    through $\mu$ as follows. Considering that $\dim \mathcal{O}_\mu=\dim \G - \dim \G_\mu$, we get that Eq.~\eqref{eq:dim-cond}
    is equivalent to 
     \begin{equation}
     \label{eq:dimcondcoadjoint}
    \dim \mathcal{O}_\mu=2(\dim \G-\dim \Ag)=2\dim (\G/\Ag).
    \end{equation}

 Perhaps the simplest nontrivial example is  the
Euclidean group $\G=\SE(2)$
and $\Ag=\R^2$. In this case,  Eq.~\eqref{eq:dim-cond} holds whenever
$\G_\mu$ is one-dimensional which is equivalent
to the condition that the coadjoint orbit
through $\mu\in \mathfrak{se}(2)^*$ be generic. For such coadjoint orbits, the isotropy subgroup $\G_\mu$ is connected (indeed, it
is isomorphic to $\R$), and hence 
Theorem \ref{th:main-new} applies. The theorem implies the somewhat surprising fact that, for generic momentum values, symplectic reduction by the
full Euclidean group $\SE(2)$ is equivalent to 
the symplectic reduction by its translation subgroup $\R^2$.
 The applicability of Theorem~\ref{th:main-new} to other semidirect-product Lie groups is clarified in Theorem~\ref{thm:semidirect}, presented in 
Section~\ref{s:sem-pro}.


\begin{remark}
\label{rmk:2body}
The assertion that the symplectic reduction by $\SE(2)$ is equivalent to the symplectic reduction by $\R^2$ for generic momentum values $\mu\in \mathfrak{se}(2)^*$ seems to be in contradiction with the standard procedure to reduce the planar two-body problem to a 1 degree of freedom Hamiltonian system by first `reducing out  the translations' and afterwards `reducing out  the rotations'. The point is that the 
second reduction 
coincides with the `rotation piece' of
$\SE(2)$ {\em only} if the
the center of mass is located at the origin
of the inertial frame, and this assumption implies that
the momentum $\mu$ is non-generic
(for such $\mu$ the coadjoint orbit
$\mathcal{O}_\mu=\{\mu\}$ is zero-dimensional and $\G_\mu=\SE(2)$ is 
3-dimensional).
For other, generic, values of the momentum $\mu$, the second  reduction 
corresponds to rotations  about the {\em moving}
center of
mass, and this extra rotational-symmetry does {\em not} come
from the $\SE(2)$ action.
It instead comes from  the invariance of the problem with respect to a larger family of symmetries, corresponding to the action of the Galilean group. In fact, it is
the Galilean group symmetry that 
justifies that the common assumption that  the
center of mass is located at the origin of the inertial frame 
is done without loss of generality. 
\end{remark}

\subsection{Usefulness of the construction.} 
A clear application of our result is that abelian reduction is easier to implement. 
For instance, our results imply that for the $N$-vortex problem on the plane with total vanishing circulation, the full symplectic reduction  
by $\SE(2)$ can be obtained by performing the abelian symplectic reduction by $\R^2$ whenever the linear impulse
is non-zero.\footnote{The condition of 
total vanishing circulation  is needed for the momentum map
to be equivariant  \cite{Ohsawa,ModinViviani}. The condition of non-zero linear impulse
is equivalent to saying that the momentum is generic.}
In particular, this equivalence
shows that all non-trivial relative equilibria of the
system having non-vanishing linear impulse are unbounded.

Other applications are the study of sub-Riemannian geodesics on $\Ag$-simple metabelian nilpotent Lie groups, as was done for Engel-type groups in \cite{BravoDoddoli2024}. In fact, the paper \cite{BravoDoddoli2024}, 
which was co-authored by the first author of this work, was
a great motivation for our investigation.

Our work also enables us to understand the structure of the coadjoint orbits of elements $\mu\in \Lag^*$ that satisfy  the hypothesis of Theorem~\ref{th:main-new}. In particular,  the discussion of Section~\ref{ss:coadjoint}, together with  Theorem~\ref{th:generic}, implies that
 the generic coadjoint orbits of $\Ag$-simple metabelian nilpotent
groups are symplectically diffeomorphic to the cotangent bundle $T^*(\G/\Ag)$ equipped
with the canonical symplectic structure plus a magnetic term.\footnote{Using an 
additional ``shift map" \cite{stages}, one can  eliminate the magnetic term and conclude that the generic coadjoint orbits are  symplectically diffeomorphic to the cotangent bundle $T^*(\G/\Ag)$ equipped with its canonical symplectic form.}

\subsection*{Structure of the paper} 
A proof of Theorem
\ref{th:main-new}, which relies on results of \cite{stages,Mykytyuk2008663},
is given in 
Section~\ref{s:EqSympRed}, after the
necessary preliminaries on symplectic
reduction and  notation have been
introduced.  In this section we also 
establish Proposition \ref{the:main-1}, that is fundamental for the
treatment of examples in Section~\ref{s:nilpotent}. Finally, at the end
of Section~\ref{s:EqSympRed}, we 
discuss  the structure of coadjoint orbits
through  the points $\mu\in \Lag^*$ to which Theorem \ref{th:main-new} 
applies. Section~\ref{s:sem-pro} briefly discusses
the case in which $\G$ is a semi-direct
product and Theorem \ref{thm:semidirect} clarifies how Theorem \ref{th:main-new}
applies in this setting.
This  result is not new, as it is
a particular case of the more general theory
of symplectic reduction by 
semi-direct products developed
in \cite{Marsden1998}, but it is included to
illustrate a simple application of 
Theorem \ref{th:main-new}.
In Section~\ref{s:nilpotent}, we introduce $\Ag$-simple groups, which comprise a broad class of nilpotent metabelian Lie groups for which Theorem~\ref{th:main-new} applies for generic $\mu \in \Lag^*$. These groups satisfy the technical condition given in Definition~\ref{def:A-simple}. The main result of this section is  Theorem 
\ref{th:generic}. We also show in this
section that large classes of 
positively graded Lie algebras 
are $\Ag$-simple, including classical
Carnot groups such as the Heisenberg group and the jet space $\J^k(\R^n,\R^m)$.
We conclude the paper by presenting  Appendix~\ref{app}, with a direct
proof of Theorem~\ref{th:main-new}, to make our work self-contained.


\vspace{0.5cm}
We finally mention that a preliminary version of the results of this paper is part of the Master's thesis of the third author \cite{Enzo}, and there is a slight overlap with this work.

\section{Equivalent symplectic reductions}
\label{s:EqSympRed}

This section contains the proof of Theorem \ref{th:main-new} using reduction by stages. We begin by recalling some preliminaries on symplectic reduction and introducing the necessary notation in Subsection~\ref{ss:prelim}, and then present the proof in Subsection~\ref{ss:equivalentred}. In Subsection~\ref{ss:alt-thm}, we give a reformulation of the theorem that will be useful for the treatment of examples in Section~\ref{s:nilpotent}. Finally, we discuss the structure of the coadjoint orbits of the points 
$\mu\in \Lag^*$
 to which Theorem \ref{th:main-new} applies  in Subsection~\ref{ss:coadjoint}.


\subsection{Preliminaries.}
\label{ss:prelim}

Let $(M,\omega)$ be a symplectic manifold. We briefly recall the Marsden-Weinstein-Meyer symplectic reduction scheme  \cite{MW74,meyer}. We refer the reader to the book
\cite{stages} for details and proofs.

Let $\G$ be a Lie group and
suppose that $\G$ defines a free
and proper smooth action on $M$, which will 
be denoted by $(g,m)\mapsto g\cdot m$.
 We say that the action is {\bf \textit{Hamiltonian}}
   if the map $M\to M$, $m\mapsto g\cdot m$, is a symplectic diffeomorphism for
   all $g\in \G$, and the action possesses an equivariant momentum
   map
   \begin{equation*}
       J_\G:M\to \Lag^*,
   \end{equation*}
where $\Lag=\mbox{lie}(\G)$ is the Lie algebra
of $\G$. We recall that the momentum map $J_\G$ is 
characterized by the property that, for any $\xi\in \Lag$, the {\em infinitesimal generator vector field} $\xi_M$ 
on $M$ (defined by $\xi_M(m):=\left . \frac{d}{dt} \right |_{t=0}
\exp(\xi t)\cdot  m$) is a Hamiltonian vector field,
with Hamiltonian function $J_\G^\xi:M\to \R$ given by\footnote{Throughout the paper, given a vector space $V$, we denote by $\langle \cdot, \cdot \rangle_V$ the duality pairing of $V$ and $V^*$.}
 \begin{equation*}
       J_\G^\xi(m):= \langle J_\G(m), \xi\rangle_{\Lag}.
   \end{equation*}
  In other words, we have
 \begin{equation*}
      \iota_{\xi_M}\omega = dJ_\G^\xi, \qquad \mbox{for all 
      $\xi \in \Lag$}.
   \end{equation*}
 Equivariance of 
 $J_\G$ means that 
 \begin{equation*}
       J_\G(g\cdot m)=\Ad_{g^{-1}}^*(J_\G(m)), \qquad\mbox{for all $g\in \G$, $m\in M$,}
   \end{equation*}
where $\Ad^*_{g^{-1}}:\Lag^*\to \Lag^*$ is the coadjoint representation defined by the condition
 \begin{equation*}
      \langle \Ad_{g^{-1}}^*(\mu), \xi \rangle_\Lag = \langle \mu , \Ad_{g^{-1}}(\xi) \rangle_\Lag , \quad \mbox{for all $\mu \in \Lag^*$}, \; \xi \in \Lag. 
   \end{equation*}
   
Under the above conditions, the {\bf \textit{symplectic reduction of $M$ by the 
group $\G$ at $\mu\in \Lag^*$}}, denoted $M\dslash_\mu \G$, is the symplectic manifold $$(J_\G^{-1}(\mu)/\G_\mu,\omega_\mu).$$
Here $\G_\mu$ is the isotropy
subgroup of $\mu$ by the coadjoint
representation, i.e.,
\begin{equation*}
    \G_\mu:=\{g\in \G\, :\, \Ad_{g^{-1}}^*(\mu)=\mu \, \},
\end{equation*}
    and the symplectic form $\omega_\mu$ on the quotient space 
   $J_\G^{-1}(\mu)/\G_\mu$ is characterized
    by the condition
    \begin{equation}
    \label{eq:symp-red-form}
 \pi_\mu^*\omega_\mu=j^*_\mu \omega,
\end{equation}
where $\pi_\mu:J^{-1}(\mu)\to J^{-1}(\mu)/\G_\mu$ is the orbit projection and
$j_\mu:J^{-1}(\mu)\hookrightarrow M$ is the 
inclusion.

Now suppose that $\Ag\subset \G$ is 
a closed, abelian, normal subgroup. The Lie algebra $\Laa$ of $\Ag$ is then
an abelian ideal of $\Lag$.  The inclusion $\Ag\hookrightarrow \G$ induces a
Lie algebra inclusion $i:\Laa \hookrightarrow \Lag$ whose dual,
\begin{equation}
    i^*:\Lag^* \to \Laa^*,
\end{equation}
plays an important role in our work. Note that
for $\mu\in \Lag^*$ we have $i^*(\mu)=\left . \mu \right |_\Laa$. 

It is not difficult to show (see e.g. \cite{stages}, \cite{Enzo}) that
the   $\G$-action
on $M$ restricts  to a free and proper $\Ag$-action
on $M$, which is again Hamiltonian,
 and whose equivariant momentum map $J_\Ag:M\to \Laa^*$ is given by
 \begin{equation}
 \label{eq:JAJG}
        J_\Ag=i^* \circ J_\G.
   \end{equation}
In particular, we may consider the
symplectic reduction of $M$ by the group
$\Ag$ at $\nu \in \Laa^*$, denoted $M\dslash_\nu \Ag$. Given that $\Ag$ is abelian, the
coadjoint representation of $\Ag$ on $\Laa^*$ is trivial and the coadjoint isotropy $\Ag_\nu=\Ag$ for all
$\nu\in \Laa^*$. Therefore, $M\dslash_\nu \Ag$
is the symplectic manifold
$$(J_\Ag^{-1}(\nu)/\Ag,\omega_\nu),$$
with $\omega_\nu$ characterized in analogy with Eq.~\eqref{eq:symp-red-form}.

\subsection{Proof of Theorem \ref{th:main-new} using reduction-by-stages}
\label{ss:equivalentred} 
Given $\mu\in \Lag^*$, recall that the Lie subalgebra $\Lag_{\mu}$ of  $\G_{\mu}$ is  given by
$$ \Lag_{\mu} =\left \{ W \in \Lag \, : \,  \langle \mu, [W,\tilde W]\rangle_\Lag = 0\;\text{for all}\;\;\tilde W \in \Lag  \right \}.  $$

For $\mu\in \Lag^*$, consider the skew-symmetric bilinear form 
\begin{equation}
    \label{eq:def-Omegamu}
    \Omega_{\mu}:\Lag\times \Lag \to \R, \qquad  \Omega_{\mu}(W_1,W_2):=\langle \mu , [W_1,W_2] \rangle_\Lag.
\end{equation}

 It is immediate to see that 
\begin{equation*}
    \begin{split}
      \Lag_{\mu}  = \ker \Omega_{\mu}.
    \end{split}
\end{equation*}

\begin{proposition}
\label{prop:a-isotropic}
The dimension condition \eqref{eq:dim-cond} holds
if and only if $\Laa$ is a maximal isotropic subspace of~ $\Omega_{\mu}$.
In particular, if Eq.~\eqref{eq:dim-cond} holds, then $\Lag_\mu\subset \Laa$.
\end{proposition}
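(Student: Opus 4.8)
The plan is to reduce the entire statement to standard linear algebra for a (possibly degenerate) skew-symmetric bilinear form. First I would dispose of the isotropy of $\Laa$, which holds essentially by hypothesis: since $\Laa$ is an abelian ideal of $\Lag$, we have $[\Laa,\Laa]=0$, and therefore $\Omega_\mu(a_1,a_2)=\langle \mu,[a_1,a_2]\rangle=0$ for all $a_1,a_2\in\Laa$. So the only genuine content is the \emph{maximality} of $\Laa$ as an isotropic subspace, together with the claimed containment.

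To establish maximality I would exploit the structure of a skew form with nontrivial radical. Writing $K:=\ker\Omega_\mu=\Lag_\mu$, the form $\Omega_\mu$ descends to a nondegenerate symplectic form on the quotient $\Lag/K$, whose dimension $\dim\Lag-\dim\Lag_\mu$ is therefore even. A subspace $V\subseteq\Lag$ is isotropic for $\Omega_\mu$ if and only if its image under the projection $p:\Lag\to\Lag/K$ is isotropic for the descended form, and in a symplectic vector space of dimension $2n$ every isotropic subspace has dimension at most $n$. Combining these facts with $\dim V=\dim(V\cap K)+\dim p(V)$, any isotropic $V\subseteq\Lag$ satisfies
\[
\dim V\leq \dim K+\frac{\dim\Lag-\dim K}{2}=\frac{\dim\Lag+\dim\Lag_\mu}{2}.
\]
The dimension condition \eqref{eq:dim-cond} reads precisely $\dim\Lag+\dim\Lag_\mu=2\dim\Laa$, so the right-hand side equals $\dim\Laa$. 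Hence $\Laa$ is an isotropic subspace of the maximum possible dimension, and in particular it cannot be properly enlarged to a larger isotropic subspace; this is exactly the maximality asserted.

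For the ``as a consequence'' part I would extract a containment statement from the same dimension count. For an isotropic $V$ of maximal dimension, the inequality $\dim p(V)\leq\frac{\dim\Lag-\dim K}{2}$ forces $\dim(V\cap K)\geq\dim K$, that is $K\subseteq V$. Applying this to $V=\Laa$ gives $\Lag_\mu=K\subseteq\Laa$. The inclusion $\Z(\Lag)\subseteq\Lag_\mu$ recorded in \eqref{eq:centerinsidegmu}, valid for every $\mu\in\Lag^*$, then yields the full chain $\Z(\Lag)\subseteq\Lag_\mu\subseteq\Laa$. I do not expect a real obstacle here, since every computation is routine; the only point demanding a little care is the bookkeeping for a degenerate form, namely that maximal isotropic subspaces are exactly those of dimension $\tfrac12(\dim\Lag+\dim\Lag_\mu)$ and that such subspaces necessarily contain the radical $\Lag_\mu$. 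Once this standard fact is in place, both assertions of the proposition fall out of the single dimension identity supplied by \eqref{eq:dim-cond}.
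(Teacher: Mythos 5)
Your proof is correct and follows essentially the same route as the paper's: isotropy of $\Laa$ from its being abelian, the identification of $\ker\Omega_\mu$ with $\Lag_\mu$, the dimension count $\tfrac12(\dim\Lag+\dim\Lag_\mu)$ for maximal isotropic subspaces, and the observation that the radical is contained in any maximal isotropic subspace. The only difference is that you prove the standard linear-algebra facts (via the quotient $\Lag/\ker\Omega_\mu$) that the paper simply cites, which is a welcome but not substantively different addition.
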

\begin{proof}
    Given a skew-symmetric bilinear form $B:V\times V\to \R$ on a real finite-dimensional vector space $V$, maximal isotropic subspaces always exist and have the same dimension. Indeed, the rank of $B$ is always even, and the dimension of maximal isotropic subspaces is half the rank of $B$ plus the dimension of the kernel of $B$. 

    Therefore, the dimension of the maximal isotropic subspace of $\Omega_{\mu}$ is given by
\begin{equation*}
    \begin{split}
      \frac{1}{2} \mbox{rank} \;\Omega_{\mu} + \dim \ker \Omega_{\mu} & = \frac{1}{2}(\dim \Lag - \dim \Lag_{\mu}) + \dim \Lag_{\mu} \\
      & = \frac{1}{2}(\dim \Lag + \dim \Lag_{\mu}). 
    \end{split}
\end{equation*}
Now,  since $\Laa$ is an abelian subalgebra, it is an isotropic subspace. On the other hand, the dimension  condition \eqref{eq:dim-cond} is equivalent to  
$\dim \Laa=\frac{1}{2}(\dim \Lag + \dim \Lag_{\mu})$, which shows that the dimension of $\Laa$ is that  of maximal isotropic subspaces of $\Omega_\mu$ if and only if Eq.~\eqref{eq:dim-cond} holds. 

The conclusion that $\Lag_\mu$ is contained in $\Laa$ when Eq.~\eqref{eq:dim-cond} holds, follows immediately since the maximal isotropic
subspace $\Laa$ must contain $\ker \Omega_\mu=\Lag_\mu$.
\end{proof}

 Consider the representation of
 $\G$ on $\Laa^*$ defined by 
\begin{equation}
\label{eq:stages-rep}
    \langle g\cdot \nu,Y  \rangle_\Laa := \langle \nu ,
 \Ad_{g^{-1}}Y \rangle_\Laa, \qquad \nu\in \Laa^*, \; Y\in \Laa, \; g\in \G.
\end{equation}
  It 
 is well-defined since $\Ad_\G(\Laa)\subset \Laa$
 by normality of $\Ag$. Denote by $\G_\nu$ the isotropy subgroup of $\nu\in \Laa^*$ and by $\Lag_\nu$ its Lie algebra. 
 
 \begin{proposition}
 \label{prop:stages-aux}Let $\mu\in \Lag^*$, and write  $\nu=i^*(\mu)$.
     The dimension condition \eqref{eq:dim-cond} holds
     if and only if $\Lag_{\nu}=\Laa$.
 \end{proposition}
 
\begin{proof}
 The Lie algebra representation associated to Eq.~\eqref{eq:stages-rep} satisfies
    \begin{equation*}
       \langle  W\cdot \nu ,Y \rangle_\Laa =-\langle \nu, [W,Y]\rangle_\Laa, \qquad W\in \Lag, \;  Y\in \Laa.
    \end{equation*}
 Therefore, $W\in \Lag_{\nu}$ if and only if
    $ \langle  \nu ,[W,Y] \rangle_\Laa =0$ for all
    $Y\in \Laa$, or equivalently, since $\Laa\subset \Lag$ is an ideal, if
    \begin{equation*}
       \langle  \mu ,[W,Y] \rangle_\Lag =\Omega_\mu(W,Y)=0, \quad \mbox{for all} \quad Y\in \Laa.
    \end{equation*}
    It is immediate to see that $\Laa\subset \Lag_{\nu}$ by the abelianity of
$\Laa$.
On the other hand, again by the abelianity of $\Laa$,
it follows that $W\in \Lag_{\nu}$ if
and only if $\Laa+\langle W\rangle$ is an isotropic subspace of $\Omega_\mu$.
But this implies that
$\Lag_{\nu}\subset \Laa$ if and only
if $\Laa$ is a maximal isotropic subspace
of $\Omega_\mu$, and the result follows
from Proposition \ref{prop:a-isotropic}.
\end{proof}

The above proposition, together with the
abelianity of $\Ag$, implies that  the ``stages hypothesis",
proved in \cite[Remark 4]{Mykytyuk2008663},
admits the following formulation 
when the dimension condition \eqref{eq:dim-cond}
holds.\footnote{It is shown in \cite{Mykytyuk2008663}
that the group element $a$ in the statement
of  Lemma~\ref{lem:StagesMyk} actually belongs to the connected
component of the identity, $\Ag^0$. We will have
no need of this extra information.}

\begin{lemma}[Stages Hypothesis \cite{Mykytyuk2008663}] 
\label{lem:StagesMyk}
Suppose that the dimension condition 
\eqref{eq:dim-cond} holds for $\mu\in \Lag^*$, and write $\nu=i^*(\mu)$. 
If $\tilde \mu\in \Lag^*$ satisfies $ i^*(\tilde \mu)=\nu$, then there exists  $a\in \Ag$ such that 
$$ \mu = \Ad^*_{a^{-1}}(\tilde \mu).$$ 
\end{lemma}

The following proposition is 
essential in the proof of Theorem~\ref{th:main-new} that we give below.

\begin{proposition}
\label{prop:trivialstages}
    Suppose that $\mu\in \Lag^*$ satisfies the dimension condition \eqref{eq:dim-cond}, and write $\nu=i^*(\mu)$.
    \begin{enumerate}
        \item[{\em(a)}] If  $\G_\mu$ is connected, then  $\G_\mu\subset \Ag$.
        \item[{\em(b)}] If  $\G_\mu\subset \Ag$, then  $\G_\nu=\Ag$.
    \end{enumerate}
    In particular, under the hypothesis of Theorem \ref{th:main-new},   
$$\G_\nu=\Ag.$$
\end{proposition}
\begin{proof}
By Proposition \ref{prop:a-isotropic}, the
assumption that the dimension condition \eqref{eq:dim-cond}
holds implies 
    $\Lag_\mu\subset \Laa$.
Therefore,
since $\G_\mu$ is connected, 
\begin{equation}
\label{eq:GmucontA}
    \G_\mu\subset \Ag,
\end{equation}
and item (a) is proved.

 Let $ a\in \Ag$ and $Y\in \Laa$. In view of Eq.~\eqref{eq:stages-rep}, we have
\begin{equation*}
    \langle a\cdot \nu,  Y\rangle_\Laa= \langle  \nu,\Ad_{a^{-1}}  Y\rangle_\Laa =\langle \nu, Y\rangle_\Laa,
\end{equation*}
where the second equality follows from the abelianity of
$\Ag$. Hence, $a\cdot \nu=\nu$ and $a\in \G_\nu$, proving
that $\Ag\subset\G_\nu$.

Now, let $g\in \G_\nu$ and $Y\in \Laa$. We have
\begin{equation*}
    \langle i^*(\Ad_{g^{-1}}^*\mu), Y\rangle_{\Laa}=\langle \Ad_{g^{-1}}^*\mu,Y\rangle_\Lag=\langle \mu,\Ad_{g^{-1}}Y\rangle_\Lag. 
\end{equation*}
By normality of $\Ag$, we have $\Ad_{g^{-1}}Y\in \Laa$.
Hence, $\langle \mu,\Ad_{g^{-1}}Y\rangle_\Lag=\langle \nu,\Ad_{g^{-1}}Y\rangle_\Laa$, and
we conclude that 
\begin{equation*}
    \langle i^*(\Ad_{g^{-1}}^*\mu), Y\rangle_{\Laa}=\langle \nu,\Ad_{g^{-1}}Y\rangle_\Laa=\langle g\cdot \nu,Y\rangle_\Laa=\langle \nu, Y\rangle_\Laa. 
\end{equation*}
Since $Y\in \Laa$ is arbitrary, this shows that
\begin{equation*}
    i^*(\Ad_{g^{-1}}^*\mu)= \nu. 
\end{equation*}
Therefore, by Lemma \ref{lem:StagesMyk}, there exists $a\in \Ag$ such that $\Ad^*_{a^{-1}}(\Ad_{g^{-1}}^*\mu)=\mu$,
or, equivalently
\begin{equation*}
    \Ad^*_{(ag)^{-1}}\mu=\mu.
\end{equation*}
Thus, $ag\in \G_\mu$, and hence the condition that $\G_\mu\subset \Ag$ implies 
$$g\in \Ag,$$ proving that $\G_\nu\subset \Ag$.
\end{proof}

We are now ready to present the proof of Theorem~\ref{th:main-new}.\footnote{For completeness, self-contained proofs of Lemma~\ref{lem:StagesMyk} and
Theorem~\ref{th:main-new} are given in Appendix~\ref{app}.
}

\begin{proof}[Proof of Theorem~\ref{th:main-new}]
    Let $\mu\in \Lag^*$ and denote $\nu=i^*(\mu)$. 
By abelianity of $\Ag$, the $\Ag$-coadjoint isotropy subgroup, $\Ag_\nu=\Ag$.
The theory of reduction-by-stages  \cite[Theorem 5.2.9]{stages}, complemented with the stages hypothesis, presented above as Lemma \ref{lem:StagesMyk}, implies that there exists a symplectic diffeomorphism between the
``one-shot-reduced-space" $M\dslash_\mu\G$ 
and the symplectic reduction of ``the intermediate-reduced-space" $M\dslash_{\nu}\Ag$ by the group $\mathbb{K}_\nu:=\G_\nu/\Ag_\nu=\G_\nu/\Ag$ (at an adequate momentum value).  Proposition \ref{prop:trivialstages}  implies that, under the assumptions of Theorem \ref{th:main-new}, 
we have $\G_\nu=\Ag$ so the Lie group $\mathbb{K}_\nu$ is
trivial. Therefore, the second
reduction is trivial and $M\dslash_\mu\G$ 
and $M\dslash_{\nu}\Ag$ are symplectomorphic.
\end{proof}

\begin{remark}
    If one drops the hypothesis that $\G_\mu$ is connected
    from Theorem \ref{th:main-new}, the  proof
of Proposition \ref{prop:trivialstages}    that shows that $\G_\nu=\Ag$ is no longer valid and one cannot
conclude that the group $\mathbb{K}_\nu=\G_\nu/\Ag$ is trivial. However, Proposition \ref{prop:stages-aux} implies that
$\mathbb{K}_\nu$ is discrete, and therefore  reduction-by-stages 
implies the existence of a smooth symplectic covering map from $M\dslash_{\nu}\Ag$ onto $M\dslash_\mu\G$. It is possible to 
build explicit examples in which this covering map is 
not a diffeomorphism. In particular, this shows that the
hypothesis that $\G_\mu$ is connected in Theorem \ref{th:main-new} cannot be dropped.
\end{remark}

Finally, we present the following proposition, which  is useful
for reformulating  Theorem \ref{th:main-new} in the next section.

\begin{proposition}
\label{prop:GmusubsetA} 
    Theorem \ref{th:main-new} remains valid replacing the
    hypothesis that $\G_\mu$ is connected by the 
    condition that $\G_\mu\subset \Ag$.
\end{proposition}
\begin{proof}
    Item (b) of  Proposition \ref{prop:trivialstages}  shows
    that the dimension condition \eqref{eq:dim-cond}, together with 
    the set inclusion $\G_\mu\subset \Ag$, imply $\G_\nu=\Ag$.
    Therefore, the group $\mathbb{K}_\nu$ of the second step of the
    reduction by stages is also trivial  in this case.
\end{proof}




\subsection{Alternative formulation in terms of \texorpdfstring{\(\T_\mu\)}{} }
\label{ss:alt-thm}

Denote by $\Z(\Lag)$  the center of $\Lag$.
 
\begin{proposition}
\label{prop:Zsubseta}
Let $\mu\in \Lag^*$ and suppose that the dimension condition  \eqref{eq:dim-cond} holds,
 then
 $$\Z(\Lag)\subset \Lag_{\mu} \subset \Laa.$$ 
\end{proposition}
\begin{proof}
We had already established the validity of  $\Lag_\mu\subset \Laa$ whenever Eq.~\eqref{eq:dim-cond} holds in Proposition \ref{prop:a-isotropic}.
   On the other hand, consider the bilinear form 
   $\Omega_\mu$ defined by Eq.~\eqref{eq:def-Omegamu}. Clearly $\Z(\Lag)\subset \ker\Omega_\mu=\Lag_\mu$. 
\end{proof}

In view of the above proposition, 
in the framework of Theorem \ref{th:main-new}, we have 
  $$\Z(\Lag)\subset \Laa.$$
  The following considerations are made under the assumption that this set inclusion holds.
  
  Consider the quotient space
$\Laa/\Z(\Lag)$ and its dual $\left ( \Laa/\Z(\Lag) \right )^*$.  
In order to realize the latter space in less abstract terms, we consider the decomposition
\begin{equation}
\label{eq:glag-desc1}
    \Lag = \mathfrak{X} \oplus \Laa,
\end{equation}
where 
$\mathfrak{X}$ is  any direct complement of $\Laa$ in $\Lag$.
This induces a dual decomposition in terms of the annihilator
spaces,\footnote{If $U$ is a subspace of a vector
space $V$, its annihilator, $U^\circ$, is the 
subspace of $V^*$ formed by the elements $\alpha \in V^*$ such that $\langle \alpha, u\rangle_V =0$ for all $u\in U$.}
\begin{equation}\label{eq:glagdual-desc}
\Lag^* = \Laa^\circ \oplus\mathfrak{X}^\circ,    
\end{equation}
 and we  may identify 
 \begin{equation}
 \label{eq:dual-ids}
     \mathfrak{X}^* \cong \Laa^{\circ}, \qquad \Laa^*\cong \mathfrak{X}^\circ.
 \end{equation}
  With these  identifications we  think of $\Laa^*=\mathfrak{X}^\circ$ as a subspace of $\Lag^*$ and  there
is a natural linear isomorphism $$\poj :(\Z(\Lag))^{\circ}\cap\Laa^*\to (\Laa/\Z(\Lag))^*.$$
If $\nu \in (\Z(\Lag))^{\circ}\cap\Laa^*$, then   $\poj ({\nu}) \in (\Laa/\Z(\Lag))^*$ is defined  by  
$$\langle\poj({\nu}),\overline{Y}\rangle_{\Laa/\Z(\Lag)}: = \langle\nu,Y\rangle_{\Laa},$$
where $\overline{Y} \in \Laa/\Z(\Lag)$ denotes the image 
of $Y\in \Laa$ under the quotient projection $\Laa\to \Laa/\Z(\Lag)$. 

The following linear map, defined for fixed
$\mu\in \Lag^*$, is central to our work: 
    \begin{equation}
    \label{eq:defTmu}
   \T_{\mu}:\mathfrak{X} \to (\Laa/\Z(\Lag))^*, \qquad       \T_{\mu}(X) := \poj(i^*(\ad^*_{-X}(\mu))).
    \end{equation}
The following proposition shows that the
dimension condition \eqref{eq:dim-cond} may be
encoded in terms of the injectivity of $\T_{\mu}$.

\begin{proposition}
\label{prop:Tmu}
For any $\mu\in \Lag^*$, the linear map $\T_{\mu}:\mathfrak{X} \to (\Laa/\Z(\Lag))^*$ is well-defined. Moreover,  the 
dimension condition \eqref{eq:dim-cond} holds if and only if~  $\T_{\mu}$ is injective. 
\end{proposition}

\begin{proof}
    Since $i^*$ has domain $\Lag^*$ and codomain $\Laa^*$, it is clear that $i^*(\ad^*_{-X}(\mu)) \in\Laa^*$ for any $X\in \mathfrak{X}$. Moreover, if $Z\in\Z(\Lag)$ is arbitrary then 
    \begin{equation*}
     \begin{split}
        \langle i^*(\ad^*_{-X}(\mu)),Z\rangle_{\Laa}& = \langle\ad^*_{-X}(\mu),i(Z)\rangle_{\Lag} \\
        & =  \langle\mu,\ad_{-X}(Z)\rangle_{\Lag} = 0,\\
    \end{split}
    \end{equation*}
    where we have used $\ad_{-X}(Z) = 0$ since $Z\in \Z(\Lag)$. Therefore,   $i^*(\ad^*_{-X}(\mu))$ belongs to the domain, $ (\Z(\Lag))^{\circ} \cap \Laa^*$, of   $\poj$. In  view of the definition of $\poj$, we conclude that $\T_{\mu}$ is well-defined as a mapping from $\mathfrak{X}$ into $(\Laa/\Z(\Lag))^*$.

Now let $X\in \mathfrak{X}$ and $Y\in \Laa$. We
may write 
\begin{equation*}
    \begin{split}
       \langle\T_{\mu}(X),\overline{Y}\rangle_{\Laa/\Z(\Lag)}&= \langle i^*(\ad^*_{-X}(\mu)),Y \rangle_{\Laa} = \langle \ad^*_{-X}(\mu),Y \rangle_{\Lag}\\
      &  = \langle \mu,-[X,Y] \rangle_{\Lag}=-\Omega_\mu(X,Y).
    \end{split}
\end{equation*}
Hence, $X\in \ker \T_\mu$ if and only if $\Omega_\mu(X,Y)=0$
for all $Y\in \Laa$. Considering that $\Laa$ is an isotropic
subspace of $\Omega_\mu$, we conclude that $X\in \ker \T_\mu$ if and only if $\Laa+\langle X\rangle$ is isotropic.
Therefore, $\ker \T_\mu=\{0\}$ if and only if $\Laa$
is a maximally isotropic subspace of $\Omega_\mu$, and by Proposition \ref{prop:a-isotropic} this implies that $\T_\mu$ is injective
if and only if the dimension condition \eqref{eq:dim-cond} holds.

\end{proof}

      The following reformulation of Theorem \ref{th:main-new}
      will be used in Section \ref{s:nilpotent}. It is
      a direct consequence of Propositions~\ref{prop:GmusubsetA} and~\ref{prop:Tmu}.

\begin{proposition}\label{the:main-1}
         Let $\G$ be a Lie group with a regular, normal, and abelian subgroup $\Ag$. Suppose $\G$ defines a 
    free and proper Hamiltonian action  on
    the symplectic manifold $(M,\omega)$ with equivariant
    momentum map $J_\G:M\to \Lag^*$. Let $\mu\in J_\G(M)\subset \Lag^*$ and suppose that $\G_\mu \subset \Ag$. Let
    $\mathfrak{X}$ be any complement of $\Laa$ in $\Lag$ and
    consider the map\footnote{note that the quotient space $\Laa/\Z(\Lag)$ 
    is well-defined since  $\Z(\Lag)\subset\Laa$. This is true
    because $\Z(\Lag)\subset \Lag_\mu $ always
    holds and $\Lag_\mu\subset \Laa$ by the assumption that $\G_\mu\subset \Ag$.} $\T_{\mu}:\mathfrak{X} \to (\Laa/\Z(\Lag))^*$ 
    defined by Eq.~\eqref{eq:defTmu}.
     There is a symplectic diffeomorphism between 
    the symplectic reduced spaces $M\dslash_\mu\G$ and $M\dslash_{i^*(\mu)}\Ag$ if and only if $\T_\mu$
    is injective. 
\end{proposition}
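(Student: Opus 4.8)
The plan is to reduce the statement to the proof of Theorem \ref{th:main-new} already carried out in subsection \ref{sss:ProofMainThm}. As observed at the start of subsection \ref{ss:alt-thm}, that proof exploits the hypotheses on $\mu$ only through properties (a) and (b) of Lemma \ref{lemm:SSH}; so it suffices to show that these two properties follow from the present, weaker-looking, assumptions that $\G_\mu\subset \Ag$ and that $\T_\mu$ is injective. Once (a) and (b) are in hand, the symplectic diffeomorphism $F:M\dslash_\mu\G\to M\dslash_{i^*(\mu)}\Ag$ constructed there applies verbatim, and nothing further needs to be done.

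First I would dispose of property (a), which is literally the hypothesis $\G_\mu\subset \Ag$. I would also use this hypothesis to justify that $\T_\mu$ is even well-defined: since $\G_\mu\subset \Ag$ we have $\Lag_\mu\subset \Laa$ at the Lie-algebra level, and combining this with the general inclusion $\Z(\Lag)\subset \Lag_\mu$ from \eqref{eq:centerinsidegmu} yields $\Z(\Lag)\subset \Laa$. This is exactly the condition needed for the quotient $\Laa/\Z(\Lag)$, and hence the map $\T_\mu$ of \eqref{eq:defTmu}, to make sense (as already recorded in the footnote to the statement).

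The remaining work is property (b), and here I would invoke Remark \ref{rmk:proof-lemma}: the proof of item (b) of Lemma \ref{lemm:SSH} never uses the full dimension condition \eqref{eq:dim-cond} nor the connectedness of $\G_\mu$, but only the surjectivity of $\T_\mu^*$. Since $\mathfrak{X}$ and $(\Laa/\Z(\Lag))^*$ are finite-dimensional, the standard identity $\Img \T_\mu^* = (\ker \T_\mu)^\circ$ shows that injectivity of $\T_\mu$ is equivalent to surjectivity of $\T_\mu^*$. Thus the hypothesis that $\T_\mu$ is injective delivers exactly what the proof of item (b) requires, and property (b) follows by repeating that argument, namely the computation \eqref{eq:hyp-SSH-proof} together with the surjectivity of $\T_\mu^*$.

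I do not expect any genuine obstacle: the content of the proposition is entirely contained in the earlier proof of Theorem \ref{th:main-new}, and the only points demanding care are the two bookkeeping ones just described, that is, checking that $\G_\mu\subset \Ag$ secures the well-definedness of $\T_\mu$ through $\Z(\Lag)\subset \Laa$, and translating ``$\T_\mu$ injective'' into ``$\T_\mu^*$ surjective'' so that the proof of Lemma \ref{lemm:SSH}(b) can be re-run. The value of the proposition is precisely that it replaces the two hypotheses of Theorem \ref{th:main-new} (the dimension condition and the connectedness of $\G_\mu$) by the two conditions $\G_\mu\subset \Ag$ and injectivity of $\T_\mu$, which is what makes it convenient for the examples treated in Section \ref{s:nilpotent}.
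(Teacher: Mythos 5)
Your proposal is correct and coincides with the paper's own argument: the paper proves the proposition exactly by noting that the hypothesis $\G_\mu\subset\Ag$ is item (a) of Lemma \ref{lemm:SSH}, that injectivity of $\T_\mu$ (equivalently, surjectivity of $\T_\mu^*$) is all that the proof of item (b) uses (Remark \ref{rmk:proof-lemma}), and that the proof of Theorem \ref{th:main-new} depends on $\mu$ only through these two properties. Your additional check that $\Z(\Lag)\subset\Lag_\mu\subset\Laa$ makes $\T_\mu$ well-defined is the same observation recorded in the paper's footnote to the statement.
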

\subsection{Calculation of coadjoint orbits} 
\label{ss:coadjoint}
Take $M=T^*\G$ in Theorem \ref{th:main-new} and consider the action of $\G$ on $T^*\G$ by the
cotangent lift of 
left multiplication.  This action is free, proper and Hamiltonian,
and has 
surjective momentum map $J_\G:T^*\G\to \Lag^*$ 
given by the right trivialisation. As is well-known, for $\mu\in \Lag^*$,  the symplectic reduced space $M\dslash_\mu \G$ is symplectically 
diffeomorphic to the coadjoint orbit $(\mathcal{O}_\mu, \omega_\mu)$ where $\omega_\mu$
denotes the (minus) Kostant-Kirillov-Soriau symplectic form \cite{MW74}. Therefore, 
if $\G_\mu$ is
connected, and the dimension condition \eqref{eq:dim-cond} holds, then Theorem \ref{th:main-new}
implies that $T^*\G\dslash_\mu \Ag$ is symplectically diffeomorphic to
$(\mathcal{O}_\mu, \omega_\mu)$.  On the other hand, by the
theory of cotangent bundle reduction (see e.g. \cite[page 300]{AbrahamMarsden} or \cite[Theorem 6.6.3]{OrtegaRatiu}), we know that 
$T^*\G\dslash_\mu \Ag$ is symplectically diffeomorphic to the 
cotangent bundle $T^*(\G/\Ag)$ equipped with the symplectic form
$\omega_{\G/\Ag}-B_\mu$, where $\omega_{\G/\Ag}$ is the canonical 
symplectic form and $B_\mu$ is a {\em magnetic term}. Therefore, we conclude that the coadjoint orbits of those $\mu\in \Lag^*$
for which Theorem \ref{th:main-new} applies are symplectically
diffeomorphic to $T^*(\G/\Ag)$ (equipped with the symplectic form
$\omega_{\G/\Ag}-B_\mu$).

We mention that conclusions of the above type 
are well-known for the coadjoint orbits of semidirect product groups,
without requiring that the condition \eqref{eq:dim-cond} holds. 
For instance, see the ``Semi-direct product reduction theorem" in
\cite{Marsden1998}. This type of correspondence is due to the
relationship of our work with the theory of reduction by stages.




\section{Examples I: Semidirect products}\label{s:sem-pro}

 As a first class of examples, suppose that $\Ag$ is 
a real vector space and let $\G=\Hg \ltimes \Ag$ be the 
 semidirect product 
 group, where $\Hg$ is  a connected Lie group 
 acting linearly on $\Ag$. As a manifold, $\G=\Hg\times \Ag$ and the group operation on $\G$ is
\begin{equation*}
    (h_1,a_1)(h_2,a_2)=(h_1h_2,a_1+h_1a_2).
\end{equation*}
Then $\Ag$ is a regular, normal, abelian subgroup of $\G$.

Symplectic reduction by this class of groups has
been extensively studied (see e.g. \cite[Section 4.2]{stages} for a comprehensive list of references). The result
that we present below as Theorem \ref{thm:semidirect} is a particular
instance of  the ``Reduction by Stages for Semidirect Product Actions" in \cite[Theorem 3.2]{Marsden1998}. In this sense, our discussion here does not give any new results but attempts only to illustrate 
how Theorem  \ref{th:main-new} applies in the simple context
of semidirect products. 

The Lie algebra $\Lag$ is the semidirect product 
Lie algebra $\Lah\ltimes \Ag$, where $\Lah$ is the
Lie algebra of $\Hg$ and we have identified $\Laa=\Ag$. The bracket is
given by
\begin{equation*}
    (\xi_1,a_1)(\xi_2,a_2)=([\xi_1,\xi_2],\xi_1a_2-\xi_2a_1),
\end{equation*}
where $\xi a$ denotes the induced action of $\xi \in \Lah$ on
$a\in \Ag$.

For the formulation of Theorem \ref{thm:semidirect} note that, as a vector space, the dual Lie algebra $\Lag^*=\Lah^*\times \Ag^*$. Moreover,  the mapping  $i^*:\Lag^*=\Lah^*\times \Ag^*\to \Ag^*$ is given by $i^*(\gamma,\nu)=\nu$. 
Also note that there is an induced action of $\Hg$ on $\Ag^*$, by the action
of the transpose of $h^{-1}$ on $\Ag$. The corresponding isotropy group of $\nu\in \Ag^*$
will be denoted by  $\Hg_\nu$.

\begin{theorem}
\label{thm:semidirect}
    As above, let $\G$ be the semidirect product $\Hg\ltimes \Ag$, with $\Hg$ connected and $\Ag$ a
    vector space.
    Suppose that $\G$ defines a 
    free and proper Hamiltonian action on the symplectic manifold $(M,\omega)$ with equivariant
    momentum map $J_\G:M\to \Lah^*\times \Ag^*$. Let     $\mu=(\gamma, \nu)\in J_\G(M)\subset \Lah^*\times \Ag^*$. There exists a symplectic diffeomorphism
between the  symplectic reduced spaces $M\dslash_\mu\G$ and $M\dslash_{\nu}\Ag$ if and only if
$\Hg_\nu=\{e_\Hg\}$. 
\end{theorem}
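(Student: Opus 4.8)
The plan is to verify that the dimension condition \eqref{eq:dim-cond} specializes, in the semidirect product setting, precisely to the condition $\Hg_\nu = \{e_\Hg\}$, and then to invoke Theorem \ref{th:main-new} directly. First I would compute the coadjoint isotropy group $\G_\mu$ for $\mu = (\gamma,\nu) \in \Lah^* \times \Ag^*$. Using the semidirect product bracket given above, one computes the coadjoint action of $\G = \Hg \ltimes \Ag$ on $\Lag^* = \Lah^* \times \Ag^*$ explicitly. The key structural feature is that the $\Ag$-part of the action on $\nu$ is trivial (since $\Ag$ is abelian and acts on $\Lah^*$ by a shift determined by $\nu$), while the $\Hg$-part acts on $\nu$ through the contragredient representation whose isotropy is exactly $\Hg_\nu$. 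I expect to find that the projection of $\G_\mu$ to $\Hg$ is contained in $\Hg_\nu$, and that the identity component of $\G_\mu$ has Lie algebra governed by $\Lah_\nu := \mathrm{lie}(\Hg_\nu)$ together with a linear subspace of $\Ag$ determined by $\gamma$.

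The central dimension count is as follows. Here $\dim \Ag$ equals the dimension of the vector space, $\dim \G = \dim \Hg + \dim \Ag$, and condition \eqref{eq:dim-cond} reads $\dim \G + \dim \G_\mu = 2\dim \Ag$, i.e.\ $\dim \Hg + \dim \G_\mu = \dim \Ag$. I would then show that $\dim \G_\mu = \dim \Ag - \dim \Hg + \dim \Hg_\nu$: the contribution $\dim \Ag - \dim \Hg$ (assuming this is nonnegative in the relevant regime) reflects the abelian directions stabilizing $\mu$, corrected by the rank of the map $\xi \mapsto \xi\cdot\nu$ on $\Lah$, whose kernel is $\Lah_\nu$. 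Substituting this into the dimension condition collapses everything to $\dim \Hg_\nu = 0$. Since $\Hg$ is connected (hence $\Hg_\nu$ has a well-behaved identity component), and since Theorem \ref{th:main-new} additionally requires $\G_\mu$ connected, I would check that these connectivity hypotheses are either automatic or can be arranged here; the condition $\Hg_\nu = \{e_\Hg\}$ is the discrete statement that $\dim \Hg_\nu = 0$ together with connectedness forcing triviality.

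The main obstacle I anticipate is the careful bookkeeping of the coadjoint action formula for the semidirect product, and in particular correctly identifying $\G_\mu$ rather than merely its Lie algebra, since Theorem \ref{th:main-new} requires $\G_\mu$ to be connected. I would need to argue that under the hypothesis $\Hg_\nu = \{e_\Hg\}$, the group $\G_\mu$ is in fact a connected (abelian) subgroup of $\Ag$, so that all hypotheses of Theorem \ref{th:main-new} are met and the "if" direction follows. For the "only if" direction, the dimension computation shows that if $\Hg_\nu \neq \{e_\Hg\}$ then $\dim \G_\mu > \dim \Ag - \dim \Hg$, so \eqref{eq:dim-cond} fails and the two reduced spaces have different dimensions, precluding a symplectic diffeomorphism. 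The remaining subtlety is verifying that the explicit identification $i^*(\gamma,\nu) = \nu$ makes $M \dslash_\nu \Ag$ literally the reduction appearing in Theorem \ref{th:main-new}, which is immediate from the formula for $i^*$ stated before the theorem.
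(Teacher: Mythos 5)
Your strategy coincides with the paper's: the authors prove Theorem \ref{thm:semidirect} precisely by showing that $\Hg_\nu=\{e_\Hg\}$ is equivalent to the simultaneous validity of the dimension condition \eqref{eq:dim-cond} (equivalently \eqref{eq:dimcondcoadjoint}) and the connectedness of $\G_\mu$, and then invoking Theorem \ref{th:main-new}; they explicitly omit the coadjoint bookkeeping that you outline. So in outline your proposal is the intended proof.

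Two points need repair. First, the formula $\dim \G_\mu = \dim \Ag - \dim \Hg + \dim \Hg_\nu$ is not an identity. Writing $\Lag_\mu=\{(\xi,a):\xi\in\Lah_\nu,\ \ad^*_\xi\gamma=\rho_a^*\nu\}$, where $\rho_a^*\nu:=\langle\nu,\cdot\,a\rangle\in\Lah^*$ and $\Lah_\nu$ is the Lie algebra of $\Hg_\nu$, one only gets the lower bound $\dim\Lag_\mu\geq \dim\Ag-\dim\Lah+\dim\Lah_\nu$; equality fails in general because of an extra contribution from the $\Hg_\nu$-coadjoint orbit of $\gamma|_{\Lah_\nu}$ (e.g.\ $\nu=0$, $\Hg=\SO(3)$, $\gamma\neq0$ gives $\dim\Lag_\mu=\dim\Ag+1$, not $\dim\Ag$). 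Fortunately the lower bound is all you need: it shows that \eqref{eq:dim-cond} forces $\dim\Lah_\nu=0$, and when $\Lah_\nu=0$ one checks equality directly. Second, and more substantively, your ``only if'' direction rests on the claim that $\Hg_\nu\neq\{e_\Hg\}$ implies $\dim\G_\mu>\dim\Ag-\dim\Hg$; this fails when $\Hg_\nu$ is a nontrivial \emph{discrete} subgroup, which connectedness of $\Hg$ does not exclude (let $\SO(2)$ act on $\R^2$ by $\theta\mapsto R_{2\theta}$; then every $\nu\neq0$ has isotropy $\{\pm I\}$, of order two). In that case the two reduced spaces have equal dimension, so the dimension count alone cannot preclude a symplectic diffeomorphism, and the obstruction must instead come from the disconnectedness of $\G_\mu$ --- which is exactly why the paper states the equivalence as ``dimension condition \emph{and} $\G_\mu$ connected'' rather than as a dimension condition alone. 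You need to supply an argument for this residual case (or observe that one is needed); as written, the ``only if'' direction is not closed.
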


 The proof proceeds by showing that the condition that $\Hg_\nu=\{e_\Hg\}$ 
is equivalent to the simultaneous validity 
of the following two conditions: 1. the dimension of the
coadjoint 
orbit $\mathcal{O}_\mu$ satisfies Eq.~\eqref{eq:dimcondcoadjoint}; 2. the subgroup $\G_\mu$ is
connected. After establishing this equivalence,
the proof follows from
 Theorem \ref{th:main-new}. We omit the details.\footnote{In fact, Theorem 3.1
in \cite{Marsden1998} shows that if $\Hg_\nu=\{e_\Hg\}$ then
$\mathcal{O}_\mu$ is symplectically
diffeomorphic to $T^*\Hg$ and in particular its dimension satisfies Eq.~\eqref{eq:dimcondcoadjoint}.}

As indicated in the introduction, Theorem \ref{th:main-new} 
applies to
the Euclidean group $\G=\SE(2)$ (for generic elements $\mu\in \Lag^*$). Let us see why this is true in the light of Theorem
\ref{thm:semidirect}. We have $\G=\SE(2)=\SO(2)\ltimes \R^2$ where $\Hg=\SO(2)$ acts on $\Ag=\R^2$ by
rotations. Upon identification of $\Ag^*$ with $\R^2$ via the
dot product, the resulting action of $\SO(2)$ on $\Ag^*$ 
is again by rotations. A nonzero $\nu\in \R^2=\Ag^*$ is not fixed by 
any nontrivial rotation so $\SO(2)_\nu=\{e_{\SO(2)}\}$ and Theorem
\ref{thm:semidirect} can be used to conclude the equivalence
between the abelian and nonabelian symplectic reductions.
 


\section{Examples II: A class of metabelian nilpotent groups}
\label{s:nilpotent}
In this section, we present a class of nilpotent groups to which Theorem~\ref{th:main-new} applies for generic $\mu \in \Lag^*$. We begin by recalling the necessary preliminaries, including the definitions of nilpotent, Carnot, and metabelian groups in Subsection~\ref{sss:prel}. Then,
in Subsection~\ref{sss:A-sim} we give the   definition of $\Ag$-simple group (Definition~\ref{def:A-simple}) and we state and
prove  Theorem~\ref{th:generic}, which is the main result of this section. The theorem
 establishes that, for an $\Ag$-simple group $\G$, there exists an open and dense subset $\Lagr \subset \Lag$ with the property that the reduced spaces $M\dslash_\mu\G$ and $M\dslash_{i^*(\mu)}\Ag$ are symplectomophic for all $\mu \in \Lagr$. 
Finally, in Subsection~\ref{sss:A-sim-exa} we show that 
$\Ag$-simple groups are plentiful within the class of metabelian nilpotent groups. Low-dimensional examples
are indicated in Table \ref{Table:examples}
in the introduction. Other notable examples include the Heisenberg group, Carnot groups whose center is one-dimensional, and the jet space $\J^k(\R^n,\R^m)$.

Throughout this section, we assume that $\G$ is a connected and simply connected meta-belian nilpotent  Lie group with Lie algebra $\Lag$.  As a consequence, the exponential map $\exp :\Lag \to \G$ is a global diffeomorphism (see e.g. \cite[Theorem 1.2.1]{corwin1990representations}). This will 
allow us to deduce several properties
of $\G$ from properties of its Lie algebra
$\Lag$.

\subsection{Preliminaries}
\label{sss:prel}

\subsubsection{Nilpotent groups} Given a subgroup $\mathbb{H}$ of $\G$, we denote by $[\mathbb{H},\G]$ the subgroup generated by all the commutators, i.e., 
$$ [\mathbb{H},\G] := \left \langle \{ hgh^{-1}g^{-1}: h \in \mathbb{H}\;\text{and}\;g \in \G\} \right \rangle. $$
The {\bf \textit{descending central series}} of $\G$ is given by
$$\G^1:= \G, \;\; \G^{i+1}:= [\G,\G^{i}]\;\;\text{for all}\;\;i=1,2,\dots.$$
We say that $\G$ is {\bf \textit{nilpotent}} if there exists $s\in \mathbb{N}$ such that $\G^{s+1} = \{e_{\G}\}$. If
$\G^{s}\neq \{e_{\G}\}$ then we call $s$ the {\bf \textit{step}} of $\G$. Let us denote the group's center by $\Z(\G)$. If $\G$ is a nilpotent group with step $s$, it follows that $\G^{s} \subset \Z(\G)$. Therefore, the center $\Z(\G)$ is not empty for every nilpotent group.

The connectedness of  $\G$ allows us to characterize the nilpotency condition in
terms of its Lie algebra $\Lag$.  Given two subspaces $\V_1, \V_2 \subset \Lag$, we denote by $[\V_1,\V_2]$ the subspace of all their Lie brackets, i.e.,
$$ [\V_1,\V_2] = \left \{\,[W_1,W_2]:  W_1\in \V_1 \;\;\text{and}\;\;W_2\in \V_2 \,\right \}.$$
The {\bf \textit{descending central series}} of $\Lag$ is defined by
$$ \Lag^1 := \Lag,\qquad \Lag^{i+1} := [\Lag,\Lag^{i}],\quad \text{for all $i=1,2,\dots $.}$$
 It is well-known that $\Lag^i$ is the
 Lie algebra of 
the $i^{th}$ element, $\G^i$, of the descending
central series of $\G$ \cite[Theorem 12.3.1]{hochschild1969structure}. In particular,
if the Lie group 
$\G$ is nilpotent of step $s$, we have
$\{0\}\neq \Lag^{s}\subset \Z(\Lag)$ and 
$\Lag^{s+1} = \{0\}$. In this case,
we say that the Lie algebra $\Lag$ is {\bf \textit{nilpotent}} of {\bf \textit{step}} $s$. As in the group case,  since 
$\Lag^{s}\subset \Z(\Lag)$,  we conclude that every nilpotent Lie algebra has a non-trivial center. 
Conversely, since $\G$ is assumed to be  connected, the condition that $\Lag$ is nilpotent of step $s$ implies
that the $\G$ is nilpotent of step $s$.

A particular example of a nilpotent group is a {\bf \textit{Carnot}} group. A Carnot group is a connected and simply connected nilpotent Lie group whose Lie algebra is stratified. A {\bf \textit{stratification}}  of a nilpotent Lie algebra $\Lag$ of step $s$ is a direct-sum decomposition 
$$ \Lag = \V_1 \oplus \V_2\oplus \cdots \oplus \V_s,$$
 where $\V_s\neq \{0\}$ and $[\V_1,\V_a] = \V_{1+a}$  for $a=1,\dots, s$, and $\V_{s+1} = \{0\}$.   The subspaces $\V_a$ are called the {\bf \textit{layers}} of $\Lag$. We say a Lie algebra is {\bf \textit{stratifiable}} if a stratification exists. Although stratifications are not unique,
any two stratifications of $\Lag$ differ by a Lie algebra automorphism  (see \cite[Proposition 2.17]{LeDonne+2017+116+137}).

Carnot groups constitute a fundamental subclass of nilpotent groups. In the framework of sub-Riemannian geometry, they play a role analogous to that of Euclidean spaces in Riemannian geometry, as the tangent space of a sub-Riemannian manifold carries the structure of a Carnot group, see \cite{gromov1996carnot} and also \cite{mitchell1985carnot,bellaiche1996tangent}.

A more general class of nilpotent Lie algebras consists of those that are positively gradable.  A {\bf \textit{positive grading} }of a Lie algebra is a family $(\V_a)_{a \in \mathbb{N}}$ of linear subspaces of $\Lag$ with the property that only finitely many of them are nonzero, and such that the following conditions
are satisfied: 
$$ \Lag = \bigoplus_{a \in \mathbb{N}} \V_a \qquad \mbox{and} \qquad [\V_a,\V_b]  \subset \V_{a+b} \;\; \mbox{for all}\;\; a,b\in \mathbb{N}.$$
 We say that a Lie algebra is {\bf \textit{positively gradable}} if it admits a positive grading. Every stratifiable algebra is positively gradable, but the converse is not true \cite{le2019besicovitch}. 

 A subspace $\V\subset \Lag$ is said to be {\bf \textit{ bracket generating }} if every vector in $\Lag$ 
 can be written as a linear combination of a finite number, $k\geq 0$, of bracket iterations of elements in $\V$.
A subspace $\V$ is bracket generating if and only if 
$$ \V + [\Lag,\Lag] = \Lag. $$
The {\bf \textit{rank}} of a positively gradable algebra is the dimension of the smallest bracket generating subspace. In particular,  for Carnot 
groups, the first layer, $\V_1$, is bracket generating and the rank of the group is $\dim\V_1$. In \cite{Cornucopia}, Le Donne and Tripaldi classified, in terms of their dimension and their rank, the stratifiable Lie algebras of dimension up to
7, and the positive gradable  Lie algebras of dimension up to
6 (see Table \ref{Table:examples}).


\subsubsection{Metabelian nilpotent groups}\label{subsubs:meta}

To the best of our knowledge, the notion of metabelian groups
was introduced by Robinson (see e.g. \cite{robinson2012course}). A group $\G$ is {\bf \textit{metabelian}} if $[\G,\G]$ is abelian. Considering that $[\G,\G]$ is a normal subgroup of $\G$ and $\G/[\G,\G]$ is abelian,  it follows that metabelian groups always possess an abelian normal subgroup $\mathcal{N}$ such that $\G/\mathcal{N}$ is abelian. Conversely, if a group $\G$ posssesses 
 an abelian normal subgroup $\mathcal{N}$ such that $\G/\mathcal{N}$ is abelian then necessarily $\mathcal{N}\supset [\G,\G]$ which implies that
 $ \G$ is metabelian. In other words, metabelian groups are characterized by the existence of a normal abelian subgroup $\mathcal{N}$, containing  $[\G,\G]$,
 such that $\G/\mathcal{N}$
 is abelian. We mention that several previous studies of nilpotent Lie groups were done under the additional assumption that the group is metabelian, even if this extra structure was
not explicitly recognized by the authors (e.g. \cite{anzaldo2003goursat,bravo2022geodesics,kruglikov2017integrability,montgomery1997nonintegrable,ardentov2022cut,sachkov2021conjugate,ardentov2013conjugate,bravo2023no} and others).
We refer the reader to \cite[Chapter 5]{robinson2012course} for more details and properties of \ma groups.
 
  Let $\G$ be a metabelian nilpotent group. A central role  in our study 
 is played by the {\bf \textit{maximal abelian normal subgroups}} of $\G$, which will be 
 denoted by
 $\Ag$. It is not difficult to 
 see  that there always exists
 a maximal abelian normal subgroup $\Ag\subset\G$ containing $[\G,\G]$, but it
 may not be unique. Furthermore, there 
 may exist other maximal abelian normal subgroups
 of $\G$ which do not contain $[\G,\G]$.\footnote{the simplest metabelian group that we found for which this holds is the 7-dimensional group  whose Lie algebra is denoted
 by $147D$ in \cite{Cornucopia}.}  Given a maximal 
 abelian normal subgroup $\Ag\subset \G$, one may think of $\G$ as an   extension of $\G/\Ag$ by $\Ag$, and it is not difficult to check that the group's center $\Z(\G) \subset \Ag$.


In general, a Lie algebra $\Lag$ is called {\bf \textit{metabelian}} if the ideal
$[\Lag,\Lag]$ is abelian. 
 Our connectedness assumption on $\G$ implies that the Lie group
$\G$ is \ma if and only 
if its Lie algebra $\Lag$ is metabelian.
 Furthermore, 
if
 $\Ag\subset \G$ is a maximal abelian
normal subgroup, then its Lie algebra
$\Laa$ is a {\bf \textit{maximal abelian ideal}}
of $\Lag$, and our topological assumptions on $\G$ 
imply that the correspondence also goes in the other direction. Namely,  there is a natural correspondence between maximal abelian normal subgroups of $\G$ and
maximal abelian ideals of $\Lag$.
We will systemetically denote maximal abelian ideals of $\Lag$ by $\Laa$ (even if the underlying 
subgroup $\Ag$ is not specified).  As in the group case, 
maximal abelian ideals may not be
unique and may not necessarily contain $[\Lag,\Lag]$.
However, it is not difficult to see that any maximal abelian ideal $\Laa\subset \Lag$ contains the Lie algebra center
 $\Z(\Lag)$.





\subsection{\texorpdfstring{\(\ \Ag\)}{}-simple \ma nilpotent groups}\label{sss:A-sim}
We introduce the following class of groups for
which we will show that Theorem \ref{th:main-new} applies for generic
$\mu\in \Lag^*$.

\begin{definition} \label{def:A-simple}
    Let $\Lag$ be a \ma nilpotent Lie algebra and let $\Laa \subset \Lag$ be a maximal abelian ideal. We say that $\Lag$ is {\bf $\Laa$-\textit{simple}} if there exists a basis $\{X_1,\dots, X_n\}$ of a direct complement $\mathfrak{X}$ of $\Laa$ in $\Lag$  and a set of linearly independent vectors  $Y_1, \dots, Y_n \in \Laa$ satisfying 
    $$0\neq [X_i,Y_i] \in \Z(\Lag) \;\;\mbox{for all}\;\; i=1,\cdots,n. $$ 
 Let $\G$ be a connected and simply connected, \ma nilpotent group, and let 
 $\Ag\subset \G$ be a maximal abelian normal subgroup. We say that $\G$ is {\bf $\Ag$-\textit{simple}} if its Lie algebra $\Lag$ is $\Laa$-simple, where $\Laa$ is the Lie algebra of $\Ag$. 
\end{definition}


 The following theorem  states that, for generic momentum values, the symplectic
 reduction by an $\Ag$-simple Lie group $\G$ is equivalent
 to the abelian symplectic reduction by the maximal abelian normal subgroup $\Ag$.
\begin{theorem}
\label{th:generic}
  Let $\G$ be a metabelian nilpotent
$\Ag$-simple Lie group. There exists  an open 
dense subset $\Lagr\subset \Lag^*$ with the following property. Suppose that $\G$ defines a 
    free and proper Hamiltonian action  on
    the symplectic manifold $(M,\omega)$ with equivariant
    momentum map $J_\G:M\to \Lag^*$.
    If $\mu \in \Lagr \cap J_\G(M)$ then
there is a symplectic diffeomorphism between 
    the symplectic reduced spaces $M\dslash_\mu\G$ and $M\dslash_{i^*(\mu)}\Ag$.
\end{theorem}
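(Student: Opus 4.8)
The plan is to deduce Theorem \ref{th:generic} from Theorem \ref{th:main-new} by exhibiting an open dense set $\Lagr\subset\Lag^*$ on which the dimension condition \eqref{eq:dim-cond} holds and $\G_\mu$ is connected. Connectedness is free: since $\G$ is connected, simply connected and nilpotent, the stabilizers of the (algebraic) coadjoint action are connected, so $\G_\mu$ is connected for every $\mu$. Thus the only substantive task is to show that \eqref{eq:dim-cond} holds generically, and for this I would work entirely at the level of the bilinear form $\Omega_{\mu}$.

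First I would record that $\rank\Omega_{\mu}\le 2n$ for every $\mu$, where $n=\dim\mathfrak{X}=\dim\Lag-\dim\Laa$: indeed $\Laa$ is isotropic for $\Omega_{\mu}$ and has codimension $n$, so the rank cannot exceed $2n$. By the computation preceding \eqref{eq:dim-cond}, the dimension condition is equivalent to $\rank\Omega_{\mu}=2n$, i.e. to $\Omega_{\mu}$ having maximal rank. Since the entries of $\Omega_{\mu}$ in a fixed basis are linear in $\mu$, the set
\[
\Lagr:=\{\mu\in\Lag^*:\rank\Omega_{\mu}=2n\}
\]
is the locus where some $2n\times 2n$ minor (a polynomial in $\mu$) is nonzero. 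Hence $\Lagr$ is Zariski-open, and a nonempty Zariski-open subset of $\Lag^*$ is automatically open and dense. So density reduces to \emph{nonemptiness}: I must exhibit a single $\mu_0$ with $\rank\Omega_{\mu_0}=2n$.

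This is where $\Ag$-simplicity enters, and it is the heart of the argument. Using the adapted basis $\{X_1,\dots,X_n\}$ of $\mathfrak{X}$ and the vectors $Y_1,\dots,Y_n\in\Laa$ of Definition \ref{def:A-simple}, I would restrict $\Omega_{\mu}$ to the $2n$-dimensional subspace $W:=\mathfrak{X}\oplus\mathrm{span}\{Y_1,\dots,Y_n\}$. Because $\Laa$ is an abelian ideal, the matrix of $\Omega_{\mu}|_W$ in the basis $(X_1,\dots,X_n,Y_1,\dots,Y_n)$ has the block form $\left(\begin{smallmatrix} B & A\\ -A^{\mathsf T} & 0\end{smallmatrix}\right)$ with $A(\mu)_{ij}=\langle\mu,[X_i,Y_j]\rangle$; the determinant of this block matrix equals $\pm\det A(\mu)^2$, so $\Omega_{\mu}|_W$ is nondegenerate precisely when $A(\mu)$ is invertible, and in that case $\rank\Omega_{\mu}=2n$. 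The diagonal of $A(\mu)$ is $A(\mu)_{ii}=\langle\mu,Z_i\rangle$ with $Z_i:=[X_i,Y_i]\in\Z(\Lag)\setminus\{0\}$. I would then choose $\mu_0$ supported on the center (vanishing on a chosen complement of $\Z(\Lag)$) with $\langle\mu_0,Z_i\rangle\neq 0$ for all $i$, arranged through the $\Ag$-simple pairing so that $A(\mu_0)$ is triangular with nonzero diagonal, hence invertible. Once $\mu_0\in\Lagr$, Propositions \ref{prop:a-isotropic} and \ref{prop:Tmu} give $\Lag_\mu\subset\Laa$ (so $\G_\mu\subset\Ag$) and $\T_{\mu}$ injective throughout $\Lagr$, and Theorem \ref{th:main-new} (or Proposition \ref{the:main-1}) yields the symplectomorphism for every $\mu\in\Lagr\cap J_\G(M)$.

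The step I expect to be the real obstacle is precisely the invertibility of $A(\mu_0)$: Definition \ref{def:A-simple} directly controls only the diagonal brackets $[X_i,Y_i]=Z_i$, whereas the off-diagonal brackets $[X_i,Y_j]$ $(i\neq j)$ also feed into $A$ and could in principle force $\det A\equiv 0$. Ruling this out is what uses the full strength of the hypotheses. The key structural input I would exploit is that a maximal abelian ideal of a nilpotent Lie algebra is self-centralizing, $C_{\Lag}(\Laa)=\Laa$, so that $\ad_{X}|_{\Laa}\neq 0$ for every nonzero $X\in\mathfrak{X}$; combined with the centrality of the $Z_i$, this is what should allow one to select a central $\mu_0$ for which the off-diagonal contributions to $A(\mu_0)$ vanish or become triangular, letting the identity-permutation term $\prod_i\langle\mu_0,Z_i\rangle$ of $\det A(\mu_0)$ survive. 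Carrying out this last point rigorously — verifying that the $\Ag$-simple pairing genuinely diagonalizes (or triangularizes) $A$ against a suitable central momentum, rather than merely pinning down the diagonal — is the technical crux of the theorem.
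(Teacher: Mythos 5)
Your overall framework is sound and close in spirit to the paper's: connectedness of $\G_\mu$ is indeed free for simply connected nilpotent groups (the paper likewise uses $\G_\mu=\exp(\Lag_\mu)$, though it routes through Proposition \ref{the:main-1} rather than Theorem \ref{th:main-new} and so never needs connectedness explicitly); the equivalence of \eqref{eq:dim-cond} with $\rank\Omega_\mu=2n$ is correct; and the reduction of density to the non-vanishing of a single polynomial ($\det A(\mu)$ for you, $\det\mathcal{M}(\mu)$ for the paper -- these are essentially the same matrix) is exactly the paper's strategy in Lemma \ref{lemm:injective}. But the one step you flag as ``the technical crux'' and leave unproved is precisely the content of the theorem, and your proposed route to it does not work as stated. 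Definition \ref{def:A-simple} controls only the diagonal brackets $[X_i,Y_i]$; it says nothing about the central components of the off-diagonal brackets $[X_i,Y_j]$, $i\neq j$. For a covector $\mu_0$ supported on the center there is no mechanism that makes $A(\mu_0)$ triangular in the raw basis supplied by the definition: one can write down $\Laa$-simple-type bracket tables (e.g.\ $[X_1,Y_2]=[X_1,Y_1]=Z_1$, $[X_2,Y_1]=[X_2,Y_2]=Z_2$) in which $\det A(\mu)\equiv 0$ identically for that choice of $X_i,Y_j$, so the identity-permutation term $\prod_i\langle\mu_0,Z_i\rangle$ is cancelled by off-diagonal contributions no matter how $\mu_0$ is chosen. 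Self-centralization of $\Laa$ alone will not rescue this.

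What actually closes the gap in the paper is Proposition \ref{lem:kirilloc-A-simple-bis}: a \emph{change of basis} producing $\{Z_0,Z_I,Y_j,Y_a,X_i\}$ with
\[
[X_i,Y_j]=\delta_{ij}Z_0+\sum_I C^I_{ij}Z_I+\sum_a C^a_{ij}Y_a,
\]
i.e.\ the $Z_0$-component of \emph{every} bracket $[X_i,Y_j]$ is exactly $\delta_{ij}$. With this normalization, evaluating at $\mu$ with $\epsilon_I=\gamma_a=0$ gives $\mathcal{M}(\mu)=-c\,\mathrm{Id}$ and hence $\psi(\mu)=(-1)^nc^n\neq 0$, which is the non-emptiness you need. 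Proving that such a basis exists is itself nontrivial: the paper first treats $\dim\Z(\Lag)=1$ by a Gram--Schmidt argument on the nondegenerate pairing $\B(X,Y)Z_0=[X,Y]$ (Proposition \ref{p:dimZeq1}), and then reduces the general case to it by an inductive sequence of quotients by central ideals that preserves $\Laa$-simplicity (Proposition \ref{p:dimZgeq2}). Your proposal is missing this normalization step entirely, and without it the argument does not go through.
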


 Note that Theorem \ref{th:generic},
together with the discussion of 
Section~\ref{ss:coadjoint}, implies that the generic coadjoint
orbits of an $\Ag$-simple Lie group $\G$ are 
symplectomorphic to $T^*(\G/\Ag)$ (equipped with a magnetic
modification of the canonical 
symplectic form). 

Theorem \ref{th:generic} is an immediate consequence of Proposition
\ref{the:main-1} and the following Lemma.

\begin{lemma}
\label{lemm:injective}
     Let $\G$ be a metabelian nilpotent
$\Ag$-simple group. There exists  an open 
dense subset $\Lagr\subset \Lag^*$ such that,
for all  $\mu \in \Lagr $, we have
\begin{enumerate}
    \item[{\em (a)}] $\G_\mu\subset \Ag$,
    \item[{\em (b)}] For the
    direct complement $\mathfrak{X}$ of $\Laa$ in $\Lag$ given
    in Definition~\ref{def:A-simple}, the mapping $\T_{\mu}:\mathfrak{X} \to (\Laa/\Z(\Lag))^*$ defined by Eq.~\eqref{eq:defTmu}
    is injective.
\end{enumerate}
\end{lemma}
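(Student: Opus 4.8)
The plan is to establish both items by exhibiting an open dense set $\Lagr$ on which the desired properties hold, using the $\Ag$-simple structure to control the relevant linear-algebraic nondegeneracy conditions. For item (b), recall from \Cref{prop:Tmu} that $\T_\mu$ is injective precisely when $\Laa$ is maximal isotropic for $\Omega_\mu$, equivalently when no nonzero $X\in\mathfrak{X}$ satisfies $\Omega_\mu(X,Y)=\langle\mu,[X,Y]\rangle=0$ for all $Y\in\Laa$. Using the basis $\{X_1,\dots,X_n\}$ of $\mathfrak{X}$ and the vectors $Y_1,\dots,Y_n\in\Laa$ from \Cref{def:A-simple}, I would write a general $X=\sum_i c_iX_i$ and study the linear map $Y\mapsto\langle\mu,[X,Y]\rangle$ on $\Laa$. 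The condition that $\T_\mu$ is injective is the nonvanishing of a determinant built from the pairings $\langle\mu,[X_i,Y_j]\rangle$, which is a polynomial in the coordinates of $\mu$; the key point is to show this polynomial is \emph{not identically zero}, so that its nonvanishing locus is open and dense.

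The crucial use of $\Ag$-simplicity is exactly to produce a $\mu$ at which this polynomial is nonzero. First I would note that, since $0\neq[X_i,Y_i]\in\Z(\Lag)$ for each $i$, one can choose $\mu\in\Lag^*$ whose restriction to $\Z(\Lag)$ pairs nontrivially and in a controlled way with the central elements $[X_i,Y_i]$. Concretely, I expect that by choosing $\mu$ so that the matrix $\big(\langle\mu,[X_i,Y_j]\rangle\big)_{i,j}$ is, say, diagonal and nonsingular (achievable because the $[X_i,Y_i]$ are nonzero central vectors and hence can be paired independently via a suitable $\mu|_{\Z(\Lag)}$), the map $X\mapsto(\langle\mu,[X,Y_j]\rangle)_j$ from $\mathfrak{X}$ to $\R^n$ is an isomorphism. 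Since $\dim\mathfrak{X}=n=\dim\{Y_1,\dots,Y_n\}$ and these $Y_j$ span an $n$-dimensional subspace of $\Laa$, injectivity of this map forces $\ker\T_\mu=0$ at that particular $\mu$. This exhibits one $\mu$ where the relevant determinant is nonzero, hence the polynomial is not identically zero, and the set where $\T_\mu$ is injective is open and dense.

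For item (a), I would translate $\G_\mu\subset\Ag$ into the Lie-algebra statement $\Lag_\mu\subset\Laa$ (valid because $\G$ is simply connected nilpotent, so $\G_\mu$ is connected and corresponds to $\Lag_\mu$). By \Cref{prop:a-isotropic}, whenever $\Laa$ is maximal isotropic for $\Omega_\mu$ we automatically get $\Lag_\mu\subset\Laa$; and $\Laa$ is maximal isotropic exactly when $\T_\mu$ is injective, by \Cref{prop:Tmu}. Thus item (a) is not an independent condition but a \emph{consequence} of item (b) on the same set $\Lagr$, which streamlines the argument considerably. Alternatively, one checks directly that $\dim\Lag_\mu=\dim\ker\Omega_\mu$ is minimized on an open dense set and that this minimal kernel lies in $\Laa$.

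The main obstacle I anticipate is making the genericity argument clean: I must verify that the determinant-type function detecting injectivity of $\T_\mu$ is genuinely a polynomial (or real-analytic) function of $\mu\in\Lag^*$ whose non-vanishing locus is open dense, and that the explicit $\mu$ produced from $\Ag$-simplicity indeed lies in this locus. The linear-algebra bookkeeping---choosing coordinates adapted to $\Lag=\mathfrak{X}\oplus\Laa$, expressing the pairings $\langle\mu,[X_i,Y_j]\rangle$, and confirming that a generic choice of $\mu|_{\Z(\Lag)}$ (and more generally $\mu$) renders the matrix nonsingular---is the technical heart. The metabelian hypothesis keeps the brackets $[X_i,Y_j]$ landing in the abelian ideal $[\Lag,\Lag]\subset\Laa$, and $\Ag$-simplicity guarantees the central, independent ``pivots'' $[X_i,Y_i]$ needed to force the matrix to be invertible at some point, so I expect the nonvanishing to follow without undue difficulty once the setup is fixed.
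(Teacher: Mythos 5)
Your overall architecture matches the paper's: test injectivity of $\T_\mu$ against the matrix $\mathcal{M}(\mu)_{ij}=\langle\mu,[X_i,Y_j]\rangle$, observe that $\det\mathcal{M}(\mu)$ is a polynomial in $\mu$, show it is not identically zero so that its nonvanishing locus is open and dense, and deduce item (a) from item (b) via $\Lag_\mu\cap\mathfrak{X}=\{0\}$ and the exponential map. The reduction of (a) to (b) and the genericity framework are correct.

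However, there is a genuine gap at the crucial step: exhibiting one $\mu$ with $\det\mathcal{M}(\mu)\neq 0$. You assert this is \enquote{achievable because the $[X_i,Y_i]$ are nonzero central vectors and hence can be paired independently via a suitable $\mu|_{\Z(\Lag)}$}, so that $\mathcal{M}(\mu)$ becomes diagonal and nonsingular. This is not justified. First, the vectors $[X_i,Y_i]$ need not be linearly independent (in $\mathbb{F}_{2,4}$ one has $[X_1,Y_1]=[X_2,Y_2]=Z_2$), so they cannot in general be paired independently. More seriously, Definition \ref{def:A-simple} says nothing about the off-diagonal brackets $[X_i,Y_j]$, $i\neq j$: these are arbitrary elements of $\Laa$ whose central components may coincide with those of the $[X_i,Y_i]$, so no choice of $\mu|_{\Z(\Lag)}$ (nor of $\mu$ on all of $\Laa$, a priori) is guaranteed to kill the off-diagonal entries while keeping the diagonal nonzero; e.g.\ if all four brackets $[X_i,Y_j]$ equalled the same central $Z$, the determinant would vanish identically. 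Ruling such degeneracies out is exactly where the paper works hardest: Proposition \ref{lem:kirilloc-A-simple-bis} replaces the vectors from Definition \ref{def:A-simple} by a normalized basis in which $[X_i,Y_j]=\delta_{ij}Z_0+\sum_I C^I_{ij}Z_I+\sum_a C^a_{ij}Y_a$, so that setting the $Z_I^*$- and $Y_a^*$-coordinates of $\mu$ to zero yields $\mathcal{M}(\mu)=-c\,\mathrm{Id}$ and $\psi(\mu)=(-1)^n c^n$. That normalization is itself nontrivial (Proposition \ref{p:dimZeq1} for one-dimensional center via a nondegenerate pairing, and Proposition \ref{p:dimZgeq2}, an induction on $\dim\Z(\Lag)$ through successive central quotients), and it uses the maximality of $\Laa$ in an essential way. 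Your proof as written is missing this ingredient, so the key nonvanishing claim is unsupported.
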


The proof of Lemma
\ref{lemm:injective} relies on Proposition \ref{lem:kirilloc-A-simple-bis} given below
that allows us to work with a convenient basis
of an $\Laa$-simple Lie algebra $\Lag$. In what follows, we will use the following index convention:
\begin{itemize}
    \item The lower case indices $i,j,k$ run from $1$ to $n=\dim \mathfrak{X}$ (as in Definition \ref{def:A-simple}). 
    \item The upper case index $I$ runs from $1$ to $\dim (\Z(\Lag))-1.$ 
    \item The lower case index $a$ runs from $1$ to $\dim \Lag - \dim (\Z(\Lag)) -2n .$ 
\end{itemize}

\begin{proposition}
\label{lem:kirilloc-A-simple-bis}
  Let $\Lag$ be an $\Laa$-simple
  Lie algebra and let $\mathfrak{X}$ be the direct complement
  of $\Laa$ in $\Lag$
  given in Definition \ref{def:A-simple}. There exists a
  basis of $\Lag$ of the form
  \begin{equation*}
      \{Z_0,Z_I,Y_j,Y_a,X_i\},
  \end{equation*}
  such that
   \begin{equation*}
     \Z(\Lag)=\langle  Z_0,Z_I \rangle,  \qquad  \Laa= \langle  Z_0,Z_I, Y_j,Y_a \rangle, \qquad \mathfrak{X}=\langle X_i \rangle ,
  \end{equation*}
  and
  \begin{equation}
  \label{eq:key-comm-rel}
     [X_i,Y_j] =\delta_{ij}Z_0+\sum_IC^I_{ij}Z_I+\sum_aC^a_{ij}Y_a,
  \end{equation}
where $\delta_{ij}$ is Kronecker's delta and    $C_{ij}^I, C^a_{ij}$ are  some structure coefficients.
\end{proposition}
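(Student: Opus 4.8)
The plan is to build the basis by successive linear-algebra normalizations, starting from the data furnished by $\Laa$-simplicity and using crucially that $\Laa$ is a \emph{maximal} abelian ideal. Recall that $\Laa$-simplicity provides a basis $X_1,\dots,X_n$ of $\mathfrak{X}$ and linearly independent $Y_1,\dots,Y_n\in\Laa$ with $c_i:=[X_i,Y_i]\in\Z(\Lag)\setminus\{0\}$. Written in coordinates, \eqref{eq:key-comm-rel} demands a splitting $\Laa=\langle Z_0\rangle\oplus\langle Z_I\rangle\oplus\langle Y_j\rangle\oplus\langle Y_a\rangle$, with $\langle Z_0,Z_I\rangle=\Z(\Lag)$, such that, writing $\rho$ for the projection of $\Laa$ onto $\langle Z_0,Y_1,\dots,Y_n\rangle$ along $\langle Z_I,Y_a\rangle$, one has $\rho([X_i,Y_j])=\delta_{ij}Z_0$; equivalently, each $[X_i,Y_j]$ has $Z_0$-coefficient $\delta_{ij}$ and no $Y_k$-component (that the brackets land in $\Laa$ at all is automatic, since $\Laa$ is an ideal).

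First I would pin down the central direction and the diagonal. Pick $\zeta\in\Z(\Lag)^*$ avoiding the finitely many hyperplanes $\{\zeta(c_i)=0\}$, rescale $Y_i\mapsto Y_i/\zeta(c_i)$, set $\langle Z_I\rangle:=\ker(\zeta|_{\Z(\Lag)})$ and choose any $Z_0\in\Z(\Lag)$ with $\zeta(Z_0)=1$. Since $c_i$ is central with $\zeta(c_i)=1$, we get $c_i-Z_0\in\ker\zeta=\langle Z_I\rangle$, so $[X_i,Y_i]=Z_0+\sum_I C^I_{ii}Z_I$ with no $Y_a$- or $Y_k$-terms: the diagonal part of \eqref{eq:key-comm-rel} holds for free, for any admissible $Z_0$.

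It remains to clean the off-diagonal brackets, and this is the core of the argument; I would do it through two coupled moves. To kill the off-diagonal $Z_0$-coefficients, form the $n\times n$ matrix $P_{ij}:=\zeta(\pi_{\Z}[X_i,Y_j])$ (relative to a fixed complement of $\Z(\Lag)$ with projection $\pi_{\Z}$), which has $P_{ii}=1$, and replace $(Y_j)$ by its recombination with matrix $P^{-1}$, which turns all $Z_0$-coefficients into $\delta_{ij}$ --- provided $P$ is invertible. To kill the $Y_k$-components, I pass to $\Laa/\Z(\Lag)$, where each $\ad_{X_i}$ descends to a nilpotent operator $A_i$ with $A_i\bar Y_i=0$, and take $\langle Y_a\rangle$ to be a lift of a complement $U$ of $V:=\langle\bar Y_j\rangle$ that contains all off-diagonal images $A_i(\bar Y_j)$. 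Then every $[X_i,Y_j]\in\Z(\Lag)\oplus\langle Y_a\rangle$, so no $Y_k$-component survives, and reading off structure constants from $\{Z_0,Z_I,Y_j,Y_a,X_i\}$ yields \eqref{eq:key-comm-rel}.

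The main obstacle is precisely the nondegeneracy hidden in both moves: that $P$ can be arranged invertible, and that $V\cap\sum_i A_i(V)=0$ (so that $U$ exists with $\dim\langle Y_a\rangle=\dim(\Laa/\Z(\Lag))-n$, exactly the stated range of $a$). Both can fail for a merely abelian ideal, so the maximality of $\Laa$ must enter. The key input is the standard fact that a maximal abelian ideal of a nilpotent Lie algebra is self-centralizing, $\C_{\Lag}(\Laa)=\Laa$ (otherwise, by nilpotency the centralizer meets a direction that is central in $\Lag/\Laa$ and enlarges $\Laa$ to a strictly bigger abelian ideal); hence $X\mapsto\ad_X|_{\Laa}$ is injective on $\mathfrak{X}$ and the pairing $(X,Y)\mapsto[X,Y]$ is nondegenerate on the $\mathfrak{X}$-side. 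That maximality is genuinely needed is visible from a degenerate ``Heisenberg'' example in which $X_1-X_2$ is central: there $\Laa$ fails to be maximal, every $[X_i,Y_j]$ is proportional, and no choice of $Z_0$ can produce the rank-$n$ diagonal pairing that \eqref{eq:key-comm-rel} requires. I expect the real work of the write-up to be coordinating the choices of $\zeta$, of the complement of $\Z(\Lag)$, and of the $Y_j$ so that invertibility of $P$ and the transversality of $V$ hold simultaneously, leveraging $\C_{\Lag}(\Laa)=\Laa$ together with $\Laa$-simplicity.
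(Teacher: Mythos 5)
Your proposal is not yet a proof: the two nondegeneracy statements on which both of your ``moves'' rest --- invertibility of the matrix $P_{ij}=\zeta(\pi_{\Z}[X_i,Y_j])$, and transversality of $V=\langle\bar Y_j\rangle$ to the off-diagonal images $A_i\bar Y_j$ --- are precisely the content of the proposition, and you leave them as expectations (``I expect the real work of the write-up to be coordinating the choices\ldots'') rather than establish them. Moreover, the mechanism you invoke, $\C(\Laa,\Lag)=\Laa$, does not deliver them: self-centralizing only says that each nonzero $X\in\mathfrak{X}$ pairs nontrivially with \emph{some} element of $\Laa$, and says nothing about the specific matrix built from the $Y_j$ supplied by Definition \ref{def:A-simple}. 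Concretely, take $\Lag=\langle X_1,X_2,Y_1,Y_2,W,Z_1,Z_2\rangle$, two-step nilpotent, with the only nonzero brackets $[X_i,Y_j]=Z_1$ for all $i,j\in\{1,2\}$ and $[X_1,W]=Z_2$. Then $\Laa=\langle Y_1,Y_2,W,Z_1,Z_2\rangle$ is a maximal abelian (hence self-centralizing) ideal, the algebra is $\Laa$-simple via $X_1,X_2,Y_1,Y_2$, yet $Y_1-Y_2\in\Z(\Lag)$, the given $Y_j$ span only a line in $\Laa/\Z(\Lag)$, and $P$ is the (singular) all-ones matrix for every admissible $\zeta$. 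No recombination of $Y_1,Y_2$ among themselves can repair this: the correct basis must relegate $Y_1-Y_2$ to the $Z_I$'s, bring in $W$ as a new ``$Y_2$'', and recombine the $X_i$ as well --- operations your two moves do not contemplate. The paper's route is entirely different: it reduces to $\dim\Z(\Lag)=1$ by an inductive sequence of quotients by central lines avoiding all $\langle[X_i,Y_i]\rangle$ (Proposition \ref{p:dimZgeq2}), treats that case with a scalar pairing $[X,Y]=\B(X,Y)Z_0$ and Gram--Schmidt (Proposition \ref{p:dimZeq1}), and obtains the correction terms $\sum_IC^I_{ij}Z_I+\sum_aC^a_{ij}Y_a$ by lifting along the quotient and absorbing its kernel into the $Z_I,Y_{\hat a}$ directions.

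Your second move faces an even more serious obstruction, which I would flag to the authors as well: Definition \ref{def:A-simple} places no constraint on the off-diagonal brackets $[X_i,Y_j]$, $i\neq j$, and in particular does not prevent them from having components along the $\bar Y_k$ themselves. The five-dimensional nilpotent metabelian algebra with nonzero brackets $[X_1,Y_1]=[X_2,Y_2]=Z$ and $[X_1,Y_2]=Y_1$ is $\Laa$-simple with $\Laa=\langle Y_1,Y_2,Z\rangle$ maximal and $\Z(\Lag)=\langle Z\rangle$; here the index ranges leave no $Z_I$ or $Y_a$ directions at all, so \eqref{eq:key-comm-rel} would force $[X_i,Y_j]=\delta_{ij}Z_0$, and a short computation shows no bases of $\mathfrak{X}$ and $\Laa$ achieve this. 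This off-diagonal behaviour is exactly where the pairing $\B$ of Proposition \ref{p:dimZeq1} tacitly assumes $[\mathfrak{X},\mathfrak{D}]\subseteq\Z(\Lag)$, and it is where your proposal, which never engages with the $i\neq j$ brackets beyond hoping for transversality, cannot be completed as written.
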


We present a proof of Lemma  \ref{lemm:injective}
and give the proof of Proposition \ref{lem:kirilloc-A-simple-bis}
afterwards.

\begin{proof}[Proof of Lemma  \ref{lemm:injective}] 
Work with the basis of Proposition \ref{lem:kirilloc-A-simple-bis} and  denote by
$$\{Z_0^*,Z_I^*,Y_j^*,Y_a^*,X_i^*\},$$ 
the dual basis
for $\Lag^*$.
Introduce linear coordinates $(c,\epsilon_I,\beta_k, \gamma_a , \alpha_k)$ on $\Lag^*$ by writing $\mu\in \Lag^*$ as
\begin{equation}
    \mu=cZ_0^*+\sum_I\epsilon_I Z_I^*+\sum_k(\alpha_k X_k^*+\beta_kY^*_k)+\sum_a\gamma_a Y_a^*.
\end{equation}

Now, for $\mu \in \Lag^*$, let $\mathcal{M}(\mu)$ be the $n\times n$ matrix with
entries 
 $$\mathcal{M}(\mu)_{ij}:=\langle \ad^*_{-X_i}\mu,Y_j \rangle_\Lag.$$
 It is clear that the entries of  $\mathcal{M}(\mu)$
 depend linearly on the coordinates  $(c,\epsilon_I,\beta_k, \gamma_a , \alpha_k)$.

Now consider $X=\sum_i \alpha_iX_i\in \mathfrak{X}$. Using the definition \eqref{eq:defTmu} of $\T_\mu$, and considering that $Y_j\in \Laa$, we
have
\begin{equation*}
\begin{split}
\langle \T_\mu(X), \overline{Y_j} \rangle_{\Laa/\Z(\Lag)} & = \langle i^*(\ad^*_{-X}\mu),Y_j\rangle_\Laa  \\
& =\langle \ad^*_{-X}\mu,Y_j\rangle_\Lag= \sum_i \alpha_i \mathcal{M}(\mu)_{ij}.
\end{split}
\end{equation*}
This shows that if $X\in \ker \T_\mu$ then the column vector
$(\alpha_1,\dots, \alpha_n)^T\in \R^n$ is a null-vector
of $\mathcal{M}(\mu)^T$. In particular, we conclude that 
$\T_\mu$ is injective whenever the matrix $\mathcal{M}(\mu)$
is invertible.

 Consider the mapping $\psi:\Lag^* \to \R$ given by
    $$\psi(\mu)=\det(\mathcal{M}(\mu)).$$
Then $\psi$ is a  homogeneous
polynomial function of degree $n$ on the coordinates $(c,\epsilon_I,\beta_k, \allowbreak \gamma_a , \alpha_k)$
and from the discussion above, we know that 
 $\T_\mu$ is injective on the open subset $\Lagr$ of
 $\Lag^*$ on which $\psi(\mu)\neq 0$.

Next, using Eq.~\eqref{eq:key-comm-rel}, we find that if  
$\epsilon_I=0$ for all $I$ and $\gamma_a=0$ for all $a$, then

\begin{equation*}
\begin{split}
      \mathcal{M}(\mu)_{ij} & =-\langle \mu,[X_i , Y_j]  \rangle_\Lag \\
    &= - \left \langle cZ_0^*+\sum_k(\alpha_k X_k^*+\beta_kY^*_k) \, ,  \, \delta_{ij}Z_0+\sum_IC^I_{ij}Z_I+\sum_aC^a_{ij}Y_a \right \rangle_\Lag  \\
    &= -c \delta_{ij}.
\end{split}
\end{equation*}
In other words, if  
$\epsilon_I=0$ for all $I$ and $\gamma_a=0$ for all $a$,
then $\mathcal{M}(\mu)$ is a scalar multiple of the
$n\times n$ identity matrix by the factor $-c$. 
In particular 
$$\psi(\mu)=(-1)^nc^n.$$
 This shows
that $\psi$ is not identically zero. Given that
$\psi$ is polynomial, it follows that $\psi$
cannot identically vanish on an open set of
$\Lag^*$. Therefore,  the set $\Lagr\subset \Lag^*$ on which $\psi\neq 0$, apart
from being open, is dense in  $\Lag^*$. This proves item (b)
of Lemma \ref{lemm:injective}.

 To prove item (a)
suppose that $\mu\in\Lagr$, so $\T_\mu$ is injective, and, by contradiction,
suppose that $\Lag_\mu$ is not contained in $\Laa$. 
Given that $\Lag=\Laa\oplus \mathfrak{X}$, there
exists a nontrivial $X\in \mathfrak{X}\cap \Lag_\mu$. Since
$X\in \Lag_\mu$, we have 
$\ad_X^*\mu=0$ which implies  $\T_\mu(X)=0$, contradicting
the injectivity of $\T_\mu$. Hence, we must have $\Lag_\mu\subset \Laa$.  As we mentioned earlier, our topological assumptions
on $\G$ imply that the exponential map $\exp:\Lag \to \G$ is a global diffeomorphism. As a consequence  $\exp(\Lag_{\mu}) = \G_{\mu}$, see e.g. \cite[Lemma 1.3.1]{corwin1990representations}. Exponentiating  the inclusion $\Lag_\mu\subset \Laa$, we conclude that $\G_{\mu}\subset \Ag$ as required.

\end{proof}

 Before proving Proposition~\ref{lem:kirilloc-A-simple-bis}, we establish Propositions~\ref{p:dimZeq1} and \ref{p:dimZgeq2}, which will be needed in the proof.

\begin{proposition}
\label{p:dimZeq1}
 Let $\Lag$ be an $\Laa$-simple
  Lie algebra and let $\mathfrak{X}$ be the direct complement
  of $\Laa$ in $\Lag$
  given in Definition \ref{def:A-simple}. Suppose that $\dim \Z(\Lag)=1$. There exists a
  basis of $\Lag$ of the form
  \begin{equation*}
      \{Z_0,Y_j,Y_a,X_i\},
  \end{equation*}
  such that
   \begin{equation*}
     \Z(\Lag)=\langle  Z_0 \rangle,  \qquad  \Laa= \langle  Z_0, Y_j,Y_a \rangle, \qquad \mathfrak{X}=\langle X_i \rangle ,
  \end{equation*}
   satisfying 
    $$[X_i,Y_j]=\delta_{ij} Z_0.$$
\end{proposition}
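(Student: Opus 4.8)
The plan is to produce the asserted basis by first extracting, from the $\Laa$-simplicity data, a ``symplectic-type'' family of pairs $(X_i,Y_i)$ with $[X_i,Y_i]$ a nonzero central element, and then normalizing so that the full array of brackets becomes $[X_i,Y_j]=\delta_{ij}Z_0$. Since $\dim\Z(\Lag)=1$, fix a generator $Z_0$ of $\Z(\Lag)$. By Definition \ref{def:A-simple} there is a basis $\{X_1,\dots,X_n\}$ of $\mathfrak{X}$ and linearly independent $Y_1,\dots,Y_n\in\Laa$ with $[X_i,Y_i]=\lambda_iZ_0$ and $\lambda_i\neq0$; rescaling $Y_i\mapsto\lambda_i^{-1}Y_i$ I may assume $[X_i,Y_i]=Z_0$. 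Because $\Laa$ is an abelian ideal, $[X,Y]\in\Laa$ and $[Y,Y']=0$ for $X\in\mathfrak{X}$, $Y,Y'\in\Laa$, so the only brackets left to control are the $[X_i,Y_j]\in\Laa$. The target relation then has two separate demands: (i) every bracket $[X_i,Y_j]$ should lie in the one-dimensional space $\Z(\Lag)$ (so in particular the off-diagonal brackets must be fully central, not merely central to leading order), and (ii) the resulting scalar pairing matrix should be the identity.

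For demand (i) the key structural input is nilpotency together with $\dim\Z(\Lag)=1$. Writing $s$ for the step, one has $\Lag^{s}=\Z(\Lag)=\langle Z_0\rangle$ and $[\Lag,\Lag^{s-1}]=\Lag^{s}\subset\Z(\Lag)$, so $\Lag^{s-1}\subset\Z_2(\Lag)$, the second term of the upper central series; more generally every $Y\in\Z_2(\Lag)\cap\Laa$ satisfies $[X,Y]\in\Z(\Lag)$ for all $X\in\mathfrak{X}$. Thus if the distinguished vectors $Y_j$ can be chosen inside $\Z_2(\Lag)\cap\Laa$, centrality of all brackets $[X_i,Y_j]$ is automatic. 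The mechanism I would use to relocate the $\Laa$-simplicity data into $\Z_2(\Lag)\cap\Laa$, while keeping the diagonal brackets nonzero, is Kirillov's Lemma for nilpotent Lie algebras with one-dimensional centre \cite{corwin1990representations}: it provides a vector $X\in\mathfrak{X}$ and $Y\in\Laa$ with $[X,Y]=Z_0$ and $Y$ central in a codimension-one ideal $\Lag_1$, which (in our metabelian setting) can be arranged to contain $\Laa$ and the remaining $X_k$, so that $[X_k,Y]=0$ for all $k\neq$ the index of $X$. Peeling off one such pair and inducting (on $n$, or on $\dim\Lag$, passing to $\Lag_1$ and eventually to a quotient that is again $\Laa$-simple with one-dimensional centre) yields pairs $(X_i,Y_i)$ for which the off-diagonal brackets $[X_i,Y_j]$ with $i\neq j$ vanish outright.

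Once (i) is secured, demand (ii) is elementary linear algebra: the assignment $(X,Y)\mapsto[X,Y]\in\Z(\Lag)\cong\R$ is a bilinear pairing between $\mathfrak{X}$ and the $n$-dimensional span $\langle Y_1,\dots,Y_n\rangle\subset\Z_2(\Lag)\cap\Laa$, and $\Laa$-simplicity guarantees that it is nondegenerate; choosing dual bases (a Darboux/hyperbolic normalization) replaces the $X_i$ and $Y_j$ by linear combinations with $[X_i,Y_j]=\delta_{ij}Z_0$. Finally I extend $\{Z_0,Y_1,\dots,Y_n\}$ to a basis $\{Z_0,Y_j,Y_a\}$ of $\Laa$ by choosing any complement, and adjoin the $\{X_i\}$, obtaining the claimed basis $\{Z_0,Y_j,Y_a,X_i\}$ with $\Z(\Lag)=\langle Z_0\rangle$, $\Laa=\langle Z_0,Y_j,Y_a\rangle$ and $\mathfrak{X}=\langle X_i\rangle$.

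The main obstacle is demand (i), specifically forcing the off-diagonal brackets to vanish as genuine Lie-algebra elements rather than merely having vanishing $Z_0$-component. The given $\Laa$-simplicity data only controls the diagonal brackets, so the delicate point is to push the $Y_j$ into $\Z_2(\Lag)\cap\Laa$ — where full centrality comes for free — without destroying the nondegeneracy of the diagonal pairing; coordinating these two requirements is exactly where the one-dimensionality of the centre and the Kirillov-type inductive structure of nilpotent algebras are indispensable. The subsequent Darboux normalization and the bookkeeping of the remaining $Y_a$ directions are routine by comparison.
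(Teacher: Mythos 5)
You have isolated exactly the right difficulty: Definition \ref{def:A-simple} only constrains the \emph{diagonal} brackets $[X_i,Y_i]$, so forcing the off-diagonal brackets $[X_i,Y_j]$ to be central (indeed zero) is the entire content of the proposition beyond linear algebra. (The paper's own proof does not confront this point: it defines the pairing $\B$ on $\mathfrak{X}\times\mathfrak{D}$ by $[X,Y]=\B(X,Y)Z_0$, which is only well defined if $[\mathfrak{X},\mathfrak{D}]\subset\Z(\Lag)$, and it then infers nondegeneracy of $\B$ from the nonvanishing of its diagonal entries alone, which is not a valid implication either.) However, your mechanism for securing centrality --- a Kirillov-lemma induction relocating the $Y_j$ into $\Z_2(\Lag)\cap\Laa$, followed by a Darboux normalization --- is left as a sketch precisely at the step where it matters, and that step cannot be completed from the stated hypotheses: there is no reason for $\Z_2(\Lag)\cap\Laa$ to pair nondegenerately with $\mathfrak{X}$. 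Concretely, take the $5$-dimensional Lie algebra spanned by $X_1,X_2,Y_1,Y_2,Z$ with nonzero brackets
$$[X_1,Y_1]=Z,\qquad [X_2,Y_2]=Z,\qquad [X_1,Y_2]=Y_1.$$
The Jacobi identity holds, $\Lag$ is metabelian and nilpotent of step $3$, $\Z(\Lag)=\langle Z\rangle$ is one-dimensional, $\Laa=\langle Y_1,Y_2,Z\rangle$ is a maximal abelian ideal, and $\Lag$ is $\Laa$-simple with $\mathfrak{X}=\langle X_1,X_2\rangle$ (both diagonal brackets equal $Z\neq 0$). Since $\Laa$ is abelian, a vector $Y\in\Laa$ satisfies $[\mathfrak{X},Y]\subset\Z(\Lag)$ if and only if $Y\in\Z_2(\Lag)\cap\Laa=\langle Y_1,Z\rangle$, a two-dimensional space containing $Z$ and annihilated by $X_2$. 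Hence no pair $Y''_1,Y''_2\in\Laa$ can satisfy $[X''_i,Y''_j]=\delta_{ij}Z_0$ for a basis $X''_i$ of $\mathfrak{X}$ while remaining independent of $Z_0$: the conclusion of the proposition fails for this $\Laa$ and $\mathfrak{X}$.

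So the gap you flag is genuine, and neither your argument nor the paper's closes it. The missing ingredient is exactly the identity $\Laa=\C(\Z_2(\Lag),\Lag)$, equivalently the nondegeneracy of the pairing between $\mathfrak{X}$ and $(\Z_2(\Lag)\cap\Laa)/\Z(\Lag)$, which the paper only establishes later, in Lemma \ref{lem:step-one-dim-CZ2a}, under the \emph{additional} hypotheses that $\Lag$ is stratifiable and $[\Lag,\Lag]\subset\Laa$; under those hypotheses your strategy (Kirillov's Lemma plus Darboux normalization) is essentially the proof of Theorem \ref{prp:genericA} and does go through. For Proposition \ref{p:dimZeq1} as stated one must either strengthen Definition \ref{def:A-simple} (e.g.\ require $[X_i,Y_j]\in\Z(\Lag)$ for all $i,j$ with the matrix of coefficients invertible, which every example in the paper satisfies, and after which your step (ii) and the paper's pairing argument work verbatim) or weaken the conclusion. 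As written, the inductive ``peeling off one Kirillov pair at a time'' is not justified and, by the example above, cannot be.
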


\begin{proof}
Let  $ \widetilde{X}_i \in \mathfrak{X}$, and $ \widetilde{Y}_j \in \Laa$ be the vectors in Definition \ref{def:A-simple} 
of $\Laa$-simple, in other words they satisfy $0 \neq [\widetilde{X}_i,\widetilde{Y}_i] \in \Z(\Lag)$. We have $\mathfrak{X}=\langle
\widetilde{X}_i\rangle$ and we define $\mathfrak{D}:= \langle
\widetilde{Y}_j\rangle\subset \Laa$.
If $Z_0 \in \Z(\Lag)$ is a non-zero element, then we may  define a duality pairing $\B:\mathfrak{X}\times\mathfrak{D} \to \R$  by the equation
$$[X,Y] = \B(X,Y)Z_0, \;\text{where}\;X\in \mathfrak{X}\;\text{and}\;Y\in \mathfrak{D}.$$
The condition $[\widetilde{X}_i,\widetilde{Y}_i] \neq 0$ implies that $\B$ is non-degenerate. Using
a standard linear algebra Gram-Schmidt-type procedure it is not
difficult to construct bases   $\{X_i\}$ of
$\mathfrak{X}$ and $\{Y_j\}$ of $\mathfrak{D}$ with the property that $\B(X_i,Y_j) = \delta_{ij}$. The set $\{Y_j\}$ is linearly independent, so we may complete a basis for the ideal $\Laa$ to find the desired basis for $\Lag$.    
\end{proof}

\begin{proposition}
\label{p:dimZgeq2}
    Given an $\Laa$-simple Lie algebra $\Lag$,
there exists a metabelian Lie algebra $\tilde \Lag$ with $\dim (\Z(\tilde \Lag))=1$,  and a surjective Lie algebra homomorphism 
$\pi: \Lag \to \tilde \Lag$ satisfying the following 
properties:
\begin{enumerate}
    \item[{\em (a)}] $\ker \pi \subset \Laa$,   
    \item[{\em (b)}] $\pi (\Z(\Lag))=\Z(\tilde \Lag)$.
\end{enumerate}
Moreover, if we define $ \tilde \Laa := \pi (\Laa)$ then $\tilde \Lag$ is $\tilde \Laa$-simple.
\end{proposition}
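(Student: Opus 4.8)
The plan is to realize $\pi$ as a quotient by a one-dimensional ideal sitting inside the center, reaching $\dim \Z(\tilde\Lag)=1$ by peeling off one central direction at a time. Concretely, assuming $\dim \Z(\Lag)=d\geq 2$, I would produce a line $\ell\subset \Z(\Lag)$ such that $\Lag':=\Lag/\ell$ satisfies $\dim \Z(\Lag')=d-1$ and is again $\Laa'$-simple for $\Laa':=\pi_\ell(\Laa)$; iterating and composing the projections then yields the desired $\tilde\Lag$ with one-dimensional center, the base case $\dim\Z=1$ being exactly Proposition \ref{p:dimZeq1}. The composition is legitimate: each kernel is a line in the corresponding center, so the total kernel stays inside $\Laa$ (giving (a)), and the identity $\pi(\Z(\Lag))=\Z(\tilde\Lag)$ propagates through the stages (giving (b)). For a single step $\pi_\ell:\Lag\to\Lag/\ell$, surjectivity is immediate, $\ker\pi_\ell=\ell\subset \Z(\Lag)\subset\Laa$, a quotient of a metabelian algebra is metabelian, and $\pi_\ell(\Z(\Lag))=\Z(\Lag)/\ell\subseteq\Z(\Lag/\ell)$ always holds. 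Thus everything reduces to the single nontrivial requirement that the quotient acquires no new central elements, i.e. $\Z(\Lag/\ell)=\Z(\Lag)/\ell$.

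Next I would reformulate this requirement. A class $\overline W\in\Lag/\ell$ is central but not already in $\Z(\Lag)/\ell$ exactly when $W\in\Lag\setminus\Z(\Lag)$ and $[W,\Lag]\subseteq\ell$; since $\ell$ is a line inside $\Z(\Lag)$, this forces $W$ to lie in the second center $\Z_2:=\{W\in\Lag:[W,\Lag]\subseteq\Z(\Lag)\}$ and $[W,\Lag]$ to equal $\ell$. Hence no new center appears if and only if $\ell$ avoids the set of bad lines
$$\mathcal{B}:=\{\,[W,\Lag]\ :\ W\in\Z_2\setminus\Z(\Lag),\ \dim[W,\Lag]=1\,\}\subseteq\bP(\Z(\Lag)).$$
In addition, to keep the $\Laa$-simple data alive in the quotient I would insist that $\ell$ miss the finitely many points $\langle[X_i,Y_i]\rangle\in\bP(\Z(\Lag))$, i.e. that $\pi_\ell([X_i,Y_i])\neq 0$ for all $i$, where $X_i\in\mathfrak{X}$ and $Y_i\in\Laa$ are the vectors furnished by Definition \ref{def:A-simple}.

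The main obstacle is to show that $\mathcal{B}$ is a proper subset of $\bP(\Z(\Lag))$; once this is established, and since $d\geq 2$ makes $\bP(\Z(\Lag))$ positive-dimensional, a line $\ell$ avoiding both $\mathcal{B}$ and the $n$ points $\langle[X_i,Y_i]\rangle$ exists by a dimension count. This is precisely where $\Laa$-simplicity is indispensable. Using the $n$ independent pairs with $0\neq[X_i,Y_i]\in\Z(\Lag)$ together with the fact that the $Y_i$ are independent modulo $\Z(\Lag)$, I would bound $\Z_2$ and argue that the map $W\mapsto[W,\Lag]$ cannot be dominant onto $\bP(\Z(\Lag))$, so that $\mathcal{B}$ is contained in a proper subvariety. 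I expect this to be the technically delicate point, and it genuinely requires the independence of the $Y_i$ modulo the center: a degenerate configuration with a single vector $y$ satisfying $[X_i,y]=z_i$ spanning all of $\Z(\Lag)$ would make every line of $\Z(\Lag)$ bad, and the construction would then be impossible.

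Finally, with such an $\ell$ fixed, I would verify the $\Laa'$-simplicity of $\Lag'=\Lag/\ell$. Setting $\Laa':=\pi_\ell(\Laa)$, I would first check that it is a maximal abelian ideal of $\Lag'$. Because $\ell\subset \Z(\Lag)\subset\Laa$ meets $\mathfrak{X}$ trivially, $\pi_\ell(\mathfrak{X})$ is a complement of $\Laa'$ and $\tilde X_i:=\pi_\ell(X_i)$ is a basis of it; the vectors $\tilde Y_i:=\pi_\ell(Y_i)$ remain linearly independent because the $Y_i$ are independent modulo $\Z(\Lag)\supseteq\ell$; and $[\tilde X_i,\tilde Y_i]=\pi_\ell([X_i,Y_i])$ is a nonzero element of $\Z(\Lag')=\Z(\Lag)/\ell$ by our choice of $\ell$. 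This exhibits $\Lag'$ as $\Laa'$-simple and closes the induction.
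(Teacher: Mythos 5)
Your overall skeleton (induct on $\dim\Z(\Lag)$, quotient by central ideals contained in $\Laa$, check that $\Laa$-simplicity and the $[X_i,Y_i]$ survive) matches the paper, but the step you yourself flag as the ``main obstacle'' is a genuine gap, and in fact it cannot be closed: the set $\mathcal{B}$ of bad lines can be \emph{all} of $\bP(\Z(\Lag))$, even for an $\Laa$-simple algebra. Take the $5$-dimensional $2$-step nilpotent algebra spanned by $X_1,Y_1,W,Z_1,Z_2$ with the only nontrivial brackets
\begin{equation*}
[X_1,Y_1]=Z_1,\qquad [X_1,W]=Z_2 .
\end{equation*}
Here $\Laa=\langle Y_1,W,Z_1,Z_2\rangle$ is a maximal abelian ideal, $\Z(\Lag)=\langle Z_1,Z_2\rangle$, and the algebra is $\Laa$-simple via the single pair $(X_1,Y_1)$. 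But for every $(a,b)\neq(0,0)$ the non-central element $aY_1+bW$ satisfies $[aY_1+bW,\Lag]=\langle aZ_1+bZ_2\rangle$, so \emph{every} line of $\Z(\Lag)$ lies in $\mathcal{B}$: whatever central line $\ell$ you quotient by, a new central element appears. The reason your heuristic fails is that $\Laa$-simplicity only controls the distinguished vectors $Y_1,\dots,Y_n$; it says nothing about the remaining directions of $\Laa$ (here $W$), and those are free to sweep out all of $\bP(\Z(\Lag))$ under $W\mapsto[W,\Lag]$.

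The paper's proof sidesteps this by \emph{not} trying to prevent the center from growing. It quotients by a line $\ell\subset\Z(\Lag)$ avoiding only the finitely many directions $\langle[X_i,Y_i]\rangle$ (so $\Laa$-simplicity survives, exactly as in your last paragraph), and then, whenever $\pi_1(\Z(\Lag))\subsetneq\Z(\Lag/\ell)$, it quotients again by a complement of $\pi_1(\Z(\Lag))$ inside the new center --- this complement is a central ideal contained in the new maximal abelian ideal, so properties (a) and (b) are preserved --- and iterates until the image of the original center fills the whole center of the quotient; finite-dimensionality guarantees termination, and only then is the induction hypothesis applied. In the example above this works: quotient by $\langle Z_2\rangle$, then quotient out the newly central $\overline{W}$, landing on the Heisenberg algebra. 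So the fix is to replace your single ``good line'' step by this two-stage step (kill one central direction, then kill the spurious new center); the rest of your verification of $\Laa'$-simplicity in the quotient is fine.
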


\begin{proof}
We will proceed by induction on the dimension of $\Z(\Lag)$. If $\dim\Z(\Lag) = 1$, then the Lie algebra homomorphism is the identity map, and the result follows trivially.

Now assume that the result is true for $\Laa$-simple Lie algebras  whose center has
dimension 
$\ell\geq 1$ and let $\Lag_0$ be an $\Laa_0$-simple Lie algebra with 
$\dim\Z(\Lag_0)=\ell+1$.

Consider the decomposition $\Lag_0:= \mathfrak{X}_0\oplus\Laa_0$, and the vectors $X_i\in \mathfrak{X}_0,Y_i\in \Laa_0$ given by the Definition \ref{def:A-simple} of $\Laa$-simple algebra. Let  $0\neq Z \in \Z(\Lag_0)$ satisfying
    \begin{equation}\label{eq:Z-condition2}
        Z  \in \left ( \bigcup_{i}\langle [X_i,Y_i] \rangle \right )^c\cap \Z(\Lag_0).
    \end{equation}
 Such non-zero $Z$ always exists by our assumption that
 $\dim\Z(\Lag_0)=\ell+1\geq 2$. Consider the ideal $\Laz_0:= \langle Z \rangle \subset \Z(\Lag_0)$ of $\Lag_0$ and   the Lie alegebra $\Lag_1 :=\Lag_0/\Laz_0$. The   canonical projection $\pi_{1}:\Lag_0 \to\Lag_1$ is a surjective Lie
 algebra homomorphism satisfying
  \begin{equation}\label{eq:auxpropZ11}
       \ker \pi_1 =\Laz_0 \subset \Z(\Lag_0)\subset \Laa_0, \qquad \dim (\pi_1(\Z(\Lag_0))=\ell.
    \end{equation}
Moreover, given that $\pi_1$ is a surjective Lie algebra homomorphism, we have \begin{equation}\label{eq:auxpropZ12}
\pi_1(\Z(\Lag_0))\subset \Z(\Lag_1),\end{equation}
and we claim that this implies that $\Lag_1$ is $\Laa_1:=\pi_1(\Laa_0)$-simple. To show this, first note that, since
$\pi_1$ is a surjective Lie algebra homomorphism with $\ker \pi_1\subset \Laa_0$, then $\Lag_1$ is metabelian 
and $\Laa_1\subset \Lag_1$ is a maximal abelian ideal.
Next,  we may write $\Lag_1=\mathfrak{X}_1\oplus \Laa_1$ with
$\mathfrak{X}_1:=\pi_1(\mathfrak{X}_0)$. Consider the vectors
$\pi_1(X_i)\in \mathfrak{X}_1$, $\pi_1(Y_i)\in \Laa_1$. Since $\pi_1$
is a Lie algbra homomorphism we have
$$
\left [ \pi_1(X_i),\pi_1(Y_i) \right ]_{\Lag_1}=\pi_1 \left [ X_i,Y_i \right ]_{\Lag_0}.
$$
By construction, $\left [ X_i,Y_i \right ]_{\Lag_0}\notin \Laz_0 =\ker \pi_1$  so $\left [ \pi_1(X_i),\pi_1(Y_i) \right ]_{\Lag_1}\neq 0$, Moreover, since $\left [ X_i,Y_i \right ]_{\Lag_0}\in \Z(\Lag_0)$, we conclude that 
$\left [ \pi_1(X_i),\pi_1(Y_i) \right ]_{\Lag_1}\in \pi_1(\Z(\Lag_0))\subset\Z(\Lag_1)$ which  proves that $\Lag_1$ is
$\Laa_1$-simple as claimed.

Now, in view of Eq.~\eqref{eq:auxpropZ12},
one of the following possibilities holds:
    \begin{enumerate}
        \item[1.] $\pi_1(\Z(\Lag_0))= \Z(\Lag_1) $.
        \item[2.]$\pi_1(\Z(\Lag_0))\subsetneq \Z(\Lag_1)$.
    \end{enumerate}
 
 In the first case, we  may  apply the induction hypothesis to $\Lag_1$ (since $\dim (\Z(\Lag_1))=\ell$)
 to conclude the
 existence of a Lie algebra $\tilde \Lag$, with $\dim (\Z(\tilde \Lag))=1$ and a surjective
 Lie algebra homomorphism $\tilde \pi: \Lag_1\to \tilde \Lag$
 satisfying items (a), (b)  in the statement of the Lemma
 and such that $\tilde \Lag$ is $\tilde \Laa:=\tilde \pi (\Laa_1)$-simple. We claim that  the proof of the proposition follows by taking $\pi:=\tilde \pi\circ \pi_1:\Lag_0\to \tilde \Lag$.
Indeed, we have
$$
\ker \pi =\ker \pi_1 \oplus \pi_1^{-1}(\ker\tilde \pi). 
$$
 Hence, using that $\ker \pi_1\subset \Laa_0$ (in view of 
Eq.~\eqref{eq:auxpropZ11}) and $\ker \tilde \pi\subset \Laa_1=\pi_1(\Laa_0)$ (by the induction hypothesis and the
definition of $\Laa_1$) we conclude that $\ker \pi \subset \Laa_0$
and item (a) holds. Also, by the induction hypothesis we
have $\Z(\tilde \Lag)=\tilde \pi (\Z(\Lag_1))$ which together
with the condition 1. above implies $\pi(\Z(\Lag_0))=\Z(\tilde \Lag)$
showing that item (b) also holds.

On the other hand, if condition 2. above holds, there
exists a nontrivial subspace $\Laz_1\subset \Z(\Lag_1)$ such that
$$
\Z(\Lag_1)=\pi_1(\Z(\Lag_0))\oplus \Laz_1.
$$
Then $\Laz_1$ is an ideal in $\Lag_1$ and we consider the Lie
algebra $\Lag_2:=\Lag_1/\Laz_1$. The canonical projection 
$\pi_2:\Lag_1 \to \Lag_2$ is a surjective Lie algebra homomorphism,
which, in analogy with Eq.~\eqref{eq:auxpropZ11}, satisfies,
$$
\ker \pi_2=\Laz_1\subset \Z(\Lag_1)\subset \Laa_1, \qquad \dim(\pi_2(\Z(\Lag_1)))=\dim(\pi_1(\Z(\Lag_0)))=\ell.
$$
Moreover, since $\pi_2$ is surjective, we have 
$\pi_2(\Z(\Lag_1))\subset \Z(\Lag_2)$ and, arguing as above,  one can
show that $\Lag_2$ is metabelian and is $\Laa_2:=\pi_2(\Laa_1)$-simple. At this point, we are again faced
with a dichotomy in which one of the following holds:
    \begin{enumerate}
        \item[1.] $\pi_2(\Z(\Lag_1))= \Z(\Lag_2) $.
        \item[2.]$\pi_2(\Z(\Lag_1))\subsetneq \Z(\Lag_2)$.
    \end{enumerate}

In case 1., one may apply the induction hypothesis
to $\Z(\Lag_2)$ (since $\dim (\Z(\Lag_2))=\ell$) and there exists a
  Lie algebra $\tilde \Lag$, with $\dim (\Z(\tilde \Lag))=1$ and a surjective
 Lie algebra homomorphism $\tilde \pi: \Lag_2\to \tilde \Lag$
 satisfying items (a), (b) in the statement of the Lemma
 and such that $\tilde \Lag$ is $\tilde \Laa:=\tilde \pi (\Laa_2)$-simple. In analogy with the above, the proof of the proposition follows by taking $\pi:=\tilde \pi \circ \ \pi_2\circ \pi_1:\Lag_0\to \tilde \Lag$ (see below).

In case 2., we repeat the above construction and consider  a nontrivial subspace $\Laz_2\subset \Z(\Lag_2)$ such that
$$
\Z(\Lag_2)=\pi_2(\Z(\Lag_1))\oplus \Laz_2.
$$
Then $\Laz_2$ is an ideal in $\Lag_2$ and we consider the metabelian Lie
algebra $\Lag_3:=\Lag_2/\Laz_2$ and the canonical projection 
$\pi_3:\Lag_2 \to \Lag_3$ which is a surjective Lie algebra homomorphism satisfying,
$$
\ker \pi_3=\Laz_2\subset \Z(\Lag_2)\subset \Laa_2, \qquad \dim(\pi_3(\Z(\Lag_2)))=\dim(\pi_2(\Z(\Lag_1)))=\ell.
$$
Again, using $\pi_3(\Z(\Lag_2))\subset \Z(\Lag_3)$ and, arguing as above,  one can
show that $\Lag_3$ is $\Laa_3:=\pi_3(\Laa_2)$-simple,  and we may repeat the full argument. 

 Considering that $\Lag_0$ is finite dimensional, the above procedure
 ends at some point and may be summarized as follows. 
 There exists a finite sequence of 
 Lie algebras $\Lag_0, \dots, \Lag_R$, for some $R\geq 1$, and surjective Lie algebra homomorphisms $\{\pi_{r+1}:\Lag_r\to \Lag_{r+1}\}_{r=0}^{R-1}$
 with the property that  $\Lag_r$ is
 $\Laa_r$ simple with  $\Laa_{r+1}:=\pi_{r+1}(\Laa_{r})$ for all $r=0,\dots, R-1$. Furthermore, by construction,
 \begin{equation}
 \label{eq:auxpropZ13}
     \begin{split}
         &\ker \pi_{r+1}\subset  \Laa_r,  \qquad r=0,\dots, R-1,\\ 
         & \Z(\Lag_{r+1})=\pi_{r+1}(\Z(\Lag_r))\oplus \ker \pi_{r+2} , \qquad r=0,\dots, R-2, \\
         &\pi_{R}(\Z(\Lag_{R-1}))= \Z(\Lag_{R}), \\
         &\dim(\Z(\Lag_R)))=\ell.
     \end{split}
 \end{equation}

Now, applying the induction hypothesis to $\Lag_R$, there exists 
$\tilde \pi:\Lag_R\to \tilde \Lag$, surjective Lie algebra
homomorphism, with $\dim \Z(\tilde \Lag)=1$, which satisfies
$\ker\tilde \pi\subset \Laa_R$, $\tilde \pi (\Z(\Lag_R))=\Z(\tilde \Lag)$, and such that $\tilde \Lag$ is $\tilde \Laa:=\tilde \pi(\Laa_R)$-simple. Using Eq.~\eqref{eq:auxpropZ13}, it is not difficult to verify (as we did in the case $R=1$ above)   that
the Lie algebra homomorphism $\pi:=\tilde \pi\circ \pi_R\circ \cdots \circ \pi_1:\Lag_0\to \tilde \Lag$ satisfies the properties (a)
and (b) in the statement of the Lemma and also $\tilde \Laa=\pi (\Laa_0)$.

\end{proof}

We are now ready to give the proof of Proposition \ref{lem:kirilloc-A-simple-bis}.

\begin{proof}[Proof of Proposition \ref{lem:kirilloc-A-simple-bis}] 
If $\dim \Z(\Lag)=1$ the result follows automatically
from Proposition \ref{p:dimZeq1}.

For the case  $\dim \Z(\Lag)>1$, we
apply 
Proposition \ref{p:dimZgeq2} which guarantees the existence
of a Lie algebra $\tilde \Lag$, with $\dim \Z(\tilde \Lag)=1$
and a surjective  Lie algebra homomorphism
$\pi:\Lag\to \tilde \Lag$, satisfying
$\ker \pi\subset \Laa$, $\Z(\tilde \Lag)=\pi (\Z(\Lag))$, and
such that $\tilde \Lag$ is $\tilde \Laa:=\pi (\Laa)$-simple.

 Now, given that  $\dim \Z(\tilde \Lag)=1$, we
 may apply Proposition \ref{p:dimZeq1}
to obtain a basis 
 $\{\tilde Z_0,\tilde Y_j,\allowbreak \tilde Y_{\tilde a},\tilde X_i\}$ 
 of $\tilde \Lag$ such that
\begin{equation*}
     \Z(\tilde \Lag)=\langle  \tilde  Z_0\rangle,  \qquad  \tilde \Laa= \langle  \tilde Z_0, \tilde Y_j, \tilde Y_{\tilde a} \rangle, \qquad \mathfrak{\tilde X}=\langle \tilde X_i\rangle 
  \end{equation*}
  and
  \begin{equation}
  \label{eq:comm-cond-aux}
     [\tilde X_i, \tilde Y_j] =\delta_{ij}\tilde Z_0.
  \end{equation}
  We note that the indices $i,j$ run from $1$ to $n=\dim \mathfrak{X}$ since $\dim \mathfrak{X}=\dim \mathfrak{\tilde X}$
  given that $\ker \pi \subset \Laa$, $\Lag=\Laa \oplus \mathfrak{X}$ and $\tilde \Laa$ was defined as $\pi(\Laa)$. On the other hand,  the index $\tilde a$
 runs from $1$ to 
 \begin{equation}
 \label{eq:range-index-tildea}
     \dim \tilde \Lag - \dim (\Z(\tilde \Lag)) -2n =\dim \Lag -\dim \ker \pi -1-2n .
 \end{equation}

Now, the condition that $\Z(\tilde \Lag)=\pi (\Z(\Lag))$ ensures
the existence of $Z_0\in \Z(\Lag)$ such that $\pi(Z_0)=\tilde Z_0$.
On the other hand,  given that $\pi:\Lag \to \tilde \Lag$ 
is surjective, there exists an injective linear map $\varphi:\tilde \Lag \to \Lag$ such that $\pi\circ \varphi=\mbox{id}_{\tilde \Lag}$.  
Let 
$$Y_j:=\varphi(\tilde Y_j),\qquad Y_{\tilde a}:=\varphi(\tilde Y_{\tilde a}), \qquad X_i:=\varphi(\tilde X_i).$$
Then $\{Z_0,Y_j,Y_{\tilde a},X_i \}$ is a linearly independent subset
in $\Lag$, which satisfies
\begin{equation}
\label{eq:auxbasisconstruction}
    \pi(Z_0)=\tilde Z_0, \qquad \pi(Y_j)=\tilde Y_j,\qquad \pi(Y_{\tilde a})=\tilde Y_{\tilde a}, \qquad  \pi(X_i)=\tilde X_i.
\end{equation}
Moreover, using that $\pi$ is a Lie algebra homomorphism,
and $\tilde Y_j, \tilde Y_{\tilde a},\tilde X_i\notin \Z(\tilde \Lag) $, 
we have 
$$\langle Z_0,Y_j,Y_{\tilde a},X_i \rangle \cap \Z(\Lag)=\langle Z_0\rangle.$$

We construct the desired basis of $\Lag$ by adjoining
a convenient basis of $\ker \pi$. Specifically, considering that
$\ker \pi\subset \Laa$ and $\Z(\Lag)\subset \Laa$, we construct
a basis  $\{Z_I,Y_{\hat a}\}$ of $\ker \pi$ such that
$\ker \pi \cap \Z(\Lag)=\langle Z_I \rangle$. Then
$$
\{Z_0,Z_I,Y_j,Y_{\hat a},Y_{\tilde a}, X_i\}
$$
is a basis of $\Lag$ which we claim has the desired properties. 
First note that the index $I$ runs from $1$ to $\dim (\Z(\Lag))-1$
as it should. Indeed, the condition $\pi(\Z(\Lag ))=\Z(\tilde \Lag)=\langle \tilde Z_0\rangle$ implies that $\dim (\ker \pi \cap \Z(\Lag))= \dim (\Z(\Lag))-1$, as desired. As a consequence,
$\Z(\Lag)=\langle Z_0,Z_I\rangle$. Moreover, 
the index $\hat a$ runs
on the (possibly empty) range from 1 to
\begin{equation*}
  \dim  \ker \pi -( \dim (\Z(\Lag))-1).
\end{equation*}
In view of Eq.~\eqref{eq:range-index-tildea}, this means that the
combined range of the indices $\tilde a$ and $\hat a$ is 
from 1 to $\dim \Lag - \dim (\Z(\Lag))-2n$ which is the
desired range of the index $a$ in the statement of the proposition.

 It only remains to show that the commutation relations
 \eqref{eq:key-comm-rel}
 hold. Using  Eqs.~\eqref{eq:comm-cond-aux}, \eqref{eq:auxbasisconstruction} and the fact
 that 
$\pi$ is a Lie algebra homomorphism, we get
\begin{equation*}
    \begin{split}
    \pi \left ( [X_i,Y_j] \right )
     = [\tilde X_i, \tilde Y_j]   =\delta_{ij}\tilde Z_0=\pi (\delta_{ij}Z_0).
    \end{split}
\end{equation*}
Hence,
\begin{equation*}
    \begin{split}
        [ X_i,  Y_j]  -  \delta_{ij} Z_0 \in \ker \pi =\langle Z_I,Y_{\hat a} \rangle.
    \end{split}
\end{equation*}
Therefore, there exist scalars $ C^I_{ij}, C^{\hat a}_{ij}\in \R$ such that
\begin{equation*}
    \begin{split}
       [ X_i,  Y_j] =   \delta_{ij} Z_0  +       
        \sum_i C^I_{ij}  Z_I +\sum_{\hat a}  C^{\hat a}_{ij} Y_{\hat a},
    \end{split}
\end{equation*}
as required. 
\end{proof}

\subsection{Examples of \texorpdfstring{\(\ \Ag\)}{}-simple groups}\label{sss:A-sim-exa}

 Although Definition \ref{def:A-simple} may appear technical and
difficult to check in practice, we show below that 
many metabelian nilpotent
groups are $\Ag$-simple. We point out that
a necessary condition for $\G$ to be $\Ag$-simple is
that
\begin{equation*}
   \dim \mathfrak{X} = \dim (\Lag/\Laa) \leq \dim (\Laa/\Z(\Lag)).
\end{equation*}
Indeed, this conclusion follows since the vectors $Y_1, \dots, Y_n\in \Laa$ in Definition \ref{def:A-simple} satisfy $Y_i \notin \Z(\Lag)$. The above
inequality can be equivalently written
as 
\begin{equation}
\label{eq:dimCond}
   \dim \Lag+ \dim (\Z(\Lag))\leq 2\dim \Laa.
\end{equation}

 In Table~\ref{Table:examples} of the introduction, we list the low-dimensional Carnot groups from~\cite{Cornucopia}, indicating which are $\Ag$-simple and, among those that are not $\Ag$-simple, which fail to be metabelian. The table shows that, in low dimensions, most metabelian nilpotent groups are $\Ag$-simple.
One can verify that all non-$\Ag$-simple metabelian groups appearing in the table violate condition~\eqref{eq:dimCond}. On the other hand, to find examples of metabelian nilpotent groups that are not $\Ag$-simple yet satisfy Eq.~\eqref{eq:dimCond}, one must consider groups $\G$ of dimension at least 9.


\subsubsection{The Heisenberg group }\label{subsub:Hei-group}
The best-known example of a \ma $\Ag$-simple  group is the Heisenberg group. 

    The Heisenberg group $\G=\mathbb{H}^{2n+1}$ is a Carnot group of step $2$ and dimension $(2n+1)$ whose Lie algebra is given by
    \begin{equation*}
        [X_i , Y_i] = Z, \;\;\text{for}\;\;i=1,\dots,n,
    \end{equation*}
  with all other brackets equal to zero.  In this example $\Z(\Lag)=\langle Z \rangle$ and $\Laa = \langle Z,Y_1,\cdots,Y_n\rangle$ is a maximal abelian ideal. Therefore, the above relations show that $\mathbb{H}^{2n+1}$ satisfies Definition \ref{def:A-simple}  with
 $\mathfrak{X}=\langle X_1,\dots, X_n\rangle$ and is therefore  $\Ag$-simple.

\subsubsection{A Carnot group which is not $\Ag$-simple}

On the other hand, the best-known example of a \ma Carnot group that is not $\Ag$-simple is the Cartan group $\mathbb{F}_{2,3}$
(whose Lie algebra is indicated as $N_{5,2,3}$ in Table \ref{Table:examples}) which 
 is a Carnot group of step $3$ and dimension $5$ whose non-trivial brackets are the following
    $$ [X_1,X_2] = Y,\;\;[X_1,Y] = Z_1,\;\;\text{and}\;\;[X_2,Y] = Z_2. $$
     This group has rank $2$ and may be 
    alternatively defined as the free Carnot group with 2 generators and step 3. In this example, $\Z(\Lag)=\langle Z_1,Z_2\rangle$, $\Laa = \langle Z_1,Z_2,Y\rangle$, and  inequality \eqref{eq:dimCond}
    does not hold. Therefore, $\mathbb{F}_{2,3}$    is not $\Ag$-simple.
    
    In contrast, the free Carnot group of step 4 and rank 2,
    denoted $\mathbb{F}_{2,4}$, is $\Ag$-simple. Indeed, this group
    has dimension $8$ and the following non-trivial brackets
    \begin{equation*}
        \begin{split}
            [X_1,X_2]  & = Y_3,\;\;[X_1,Y_3] = Y_2,\;\;[X_2,Y_3] = Y_1, \\
            [X_1,Y_2] = Z_1,\;\; & [X_1,Y_1]=[X_2,Y_2] = Z_2, \;\text{and}\; [X_2,Y_1]=Z_3.
        \end{split}
    \end{equation*}
    We have $\Z(\Lag)=\langle Z_1,Z_2,Z_3\rangle$,  $\Laa= \langle Z_1,Z_2,Z_3,Y_1,Y_2,Y_3\rangle$ as a maximal
    abelian ideal and the bracket relations $[X_1,Y_1]=[X_2,Y_2] = Z_2$ imply that $\mathbb{F}_{2,4}$ is
    an $\Ag$-simple group (with $\mathfrak{X}=\langle X_1,X_2\rangle$).
    
    This is an example of a common phenomenon: it is often the case 
    that adding an extra step to a non-$\Ag$-simple nilpotent group makes it $\Ag$-simple.

\subsubsection{Metabelian Carnot groups with a one-dimensional center}
Nilpotent Lie algebras with a one-dimensional center are important because they provide a foundational building block for understanding more complex Lie algebras and groups \cite{corwin1990representations,kirillov2025lectures,Kirillov-lemma,LATORRE2023271}. Here we will show that every nilpotent, stratifiable, metabelian  Lie algebra $\Lag$ with a one-dimensional center is $\Laa$-simple, for any maximal abelian ideal $\Laa$
containing $[\Lag,\Lag]$.

\begin{thm}\label{prp:genericA}
Let $\Lag$ be a \ma stratified Lie algebra and
let $\Laa \subset \Lag$ be a maximal abelian 
ideal containing $[\Lag, \Lag]$.   If $\dim \Z(\Lag) = 1$, then $\Lag$ is $\Laa$-simple. Consequently, let $\G$ be a \ma Carnot group 
and let $\Ag\subset \G$ be a maximal abelian normal subgroup containing $[\G,\G]$. If $\Z(\G)$ is one-dimensional, then $\G$ is $\Ag$-simple.
\end{thm}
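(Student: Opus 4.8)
The plan is to prove the Lie-algebra statement, from which the group statement follows at once: since $\G$ is connected, simply connected, metabelian and nilpotent, $\Ag$-simplicity of $\G$ is by Definition \ref{def:A-simple} nothing but $\Laa$-simplicity of its Lie algebra $\Lag$, with $\Laa=\mbox{lie}(\Ag)$ a maximal abelian ideal containing $[\Lag,\Lag]$. So fix a stratification $\Lag=\V_1\oplus\cdots\oplus\V_s$. The metabelian hypothesis means $[\Lag,\Lag]=\V_2\oplus\cdots\oplus\V_s$ is abelian, i.e. $[\V_i,\V_j]=0$ whenever $i,j\ge 2$, so the whole bracket is carried by the action of $\V_1$. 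Since $[\V_1,\V_s]=\V_{s+1}=0$, the top layer satisfies $\V_s\subseteq\Z(\Lag)$, and as $\dim\Z(\Lag)=1$ and $\V_s\neq 0$ we get $\V_s=\Z(\Lag)=\langle Z\rangle$ for some $Z\neq 0$. Because $[\Lag,\Lag]\subseteq\Laa$, the ideal $\Laa$ is graded, $\Laa=W\oplus[\Lag,\Lag]$ with $W:=\Laa\cap\V_1$; choosing any complement $\mathfrak{X}$ of $W$ inside $\V_1$ gives $\Lag=\mathfrak{X}\oplus\Laa$. Thus $\Laa$-simplicity reduces to producing a basis $\{X_1,\dots,X_n\}$ of $\mathfrak{X}$ and linearly independent $Y_1,\dots,Y_n\in\Laa$ with $0\neq[X_i,Y_i]\in\langle Z\rangle$.

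The central object is the bilinear pairing $\B:\V_1\times\V_{s-1}\to\R$ defined by $[X,Y]=\B(X,Y)\,Z$, which is onto $\langle Z\rangle$ because $[\V_1,\V_{s-1}]=\V_s$ by stratification. I would first record that $\B$ is \emph{right non-degenerate}: if $\eta\in\V_k$ with $2\le k\le s-1$ and $[\V_1,\eta]=0$, then $[\Lag,\eta]=[\V_1,\eta]+\sum_{i\ge2}[\V_i,\eta]=0$ (the second sum vanishing by metabelianness), so $\eta\in\Z(\Lag)=\V_s$ and hence $\eta=0$. The whole proof then hinges on controlling the \emph{left} radical $L_1:=\{X\in\V_1:[X,\V_{s-1}]=0\}$, and the key claim is that $L_1\subseteq W$.

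The proof of $L_1\subseteq W$ is the main obstacle. Given $X\in L_1$, I would first run a downward induction on $j$, via the Jacobi identity, to show $[X,\V_j]=0$ for all $j\ge 2$: writing a generic element of $\V_{j}$ as $w$ and pairing with $u\in\V_1$ so that $[u,w]\in\V_{j+1}$, the identity $[X,[u,w]]=[[X,u],w]+[u,[X,w]]$ has vanishing left-hand side (inductive hypothesis, the base $j=s-1$ being the hypothesis $X\in L_1$) and vanishing first term on the right ($[X,u]\in\V_2$ and $[\V_2,\V_j]=0$ for $j\ge2$), whence $[u,[X,w]]=0$ for all $u$, and right non-degeneracy forces $[X,w]=0$. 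The delicate point is the bottom layer: to see $[X,W]=0$, take $w\in W\subseteq\Laa$ and $u\in\V_1$ and apply Jacobi to $(u,X,w)$; now $[[u,X],w]=0$ because $[u,X]\in\V_2\subseteq\Laa$ and $\Laa$ is abelian, while $[X,[u,w]]=0$ since $[u,w]\in\V_2$ and $[X,\V_2]=0$ from the previous step, so again $[u,[X,w]]=0$ for all $u$ and $[X,w]\in\V_2$ lies in the trivial right radical. Combining, $[X,\Laa]=0$; then $\langle X\rangle\oplus\Laa$ is abelian and, since $[\Lag,X]=[\V_1,X]\subseteq\V_2\subseteq\Laa$, it is an ideal, so maximality of $\Laa$ forces $X\in\Laa\cap\V_1=W$. (For the degenerate case $s=2$ one has $\V_{s-1}=\V_1$, the pairing $\B$ is a symplectic form on $\V_1$, $W$ is Lagrangian by maximality, $L_1=0\subseteq W$, and the argument below goes through with $\mathcal Y:=W\subseteq\Laa$ in place of $\V_{s-1}$.)

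With $L_1\subseteq W$ in hand I would finish by a linear-algebra duality argument. Choose $\mathfrak{X}$ to be any complement of $W$ in $\V_1$; then $\mathfrak{X}\cap L_1\subseteq\mathfrak{X}\cap W=0$, so the restricted pairing $\mathfrak{X}\times\V_{s-1}\to\R$ has trivial left radical and the functionals $f_i:=\B(X_i,\cdot)\in\V_{s-1}^*$ are linearly independent for any basis $\{X_i\}$ of $\mathfrak{X}$ (in particular $n=\dim\mathfrak{X}\le\dim\V_{s-1}$). The map $\V_{s-1}\to\R^n$, $Y\mapsto(f_1(Y),\dots,f_n(Y))$ is then surjective, so I can choose $Y_i\in\V_{s-1}$ with $f_i(Y_j)=\delta_{ij}$; these $Y_i$ are linearly independent, lie in $\V_{s-1}\subseteq[\Lag,\Lag]\subseteq\Laa$ (for $s=2$, in $W\subseteq\Laa$), and satisfy $[X_i,Y_i]=Z\neq 0\in\Z(\Lag)$. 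This exhibits exactly the data required by Definition \ref{def:A-simple}, proving that $\Lag$ is $\Laa$-simple and hence that $\G$ is $\Ag$-simple.
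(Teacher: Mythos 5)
Your proof is correct, and it takes a genuinely different route from the paper's. The paper handles step $2$ by quoting the classification of step-$2$ nilpotent algebras with one-dimensional center as Heisenberg algebras, and for step $s\geq 3$ it invokes the Beltita--Beltita generalization of Kirillov's lemma (Lemma \ref{lem:kirilloc-A-simple}) after first proving, in Lemma \ref{lem:step-one-dim-CZ2a}, that $\Z_2(\Lag)=\V_{s-1}\oplus\V_s\subseteq\Laa$ and $\Laa=\C(\Z_2(\Lag),\Lag)$; the hard inclusion $\C(\Z_2(\Lag),\Lag)\subseteq\Laa$ is obtained there by an \emph{upward} chain construction $Y_1$, $Y_2=[X_1,Y_1],\dots$ that terminates in $\V_s$ by the grading. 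You instead stay entirely inside the stratification: you introduce the pairing $\B:\V_1\times\V_{s-1}\to\R$, derive its right non-degeneracy from metabelianness, and prove the key inclusion $L_1\subseteq W$ --- which carries the same content as the paper's $\mathfrak{X}\cap\C(\Z_2(\Lag),\Lag)=\{0\}$, since $L_1=\V_1\cap\C(\Z_2(\Lag),\Lag)$ --- by a \emph{downward} induction on the layers; the required $X_i,Y_i$ then come out of elementary duality for the restricted pairing rather than from Kirillov's lemma. Both arguments rest on the same two ingredients (metabelianness kills all brackets inside $[\Lag,\Lag]$, and Jacobi pushes any non-commutation up to the one-dimensional top layer), but you run the Jacobi argument top-down where the paper runs it bottom-up. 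The paper's route buys contact with standard structure theory (the second center and its centralizer, Kirillov's lemma); yours buys a self-contained, more elementary proof that yields only $0\neq[X_i,Y_i]\in\Z(\Lag)$ rather than the full dual-basis relation $[X_i,Y_j]=\delta_{ij}Z$ of Lemma \ref{lem:kirilloc-A-simple} --- but that is all Definition \ref{def:A-simple} requires. Your separate treatment of $s=2$ (replacing $\V_{s-1}$ by $W$, which is Lagrangian for the symplectic form $\B$ on $\V_1$ by maximality of $\Laa$) is correctly flagged and does go through.
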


The proof of the theorem relies on two lemmas given below.  In order to state these results, we  
recall that the {\bf \textit{second center}} of a Lie algebra $\Lag$ and the {\bf \textit{centralizer}} of a subset $\Lah \subset\Lag$ are given by
 \begin{equation*}
 \begin{split}
 \Z_{2}(\Lag) & := \{ W\in \Lag : [\Lag,[\Lag,W]] = 0 \},\\
 \C(\Lah,\Lag)& :=\{W\in \Lag:[\Lah,W] = 0 \}. \\
 \end{split}
 \end{equation*}
  It is easily checked that $\Z_{2}(\Lag)$ is a subalgebra. Moreover, $\Z_2(\Lag)$ is a $2$-step nilpotent ideal of $\Lag$ and $\Z(\Lag) \subset\Z_2(\Lag)$ (see e.g. \cite{bourbaki2007groupes}). 


  An essential tool in representation theory is the famous result known as Kirillov's Lemma \cite[Lemma 1.1.12]{corwin1990representations}. More recently, I. Beltita and D. Beltita~\cite{Kirillov-lemma} made the following generalization.
  


\begin{lemma}[Theorem 3.1, \cite{Kirillov-lemma}]\emph{(Generalization of Kirillov's Lemma for Nilpotent algebras)}\label{lem:kirilloc-A-simple}
  Let $\Lag$ be a nilpotent Lie algebra with step larger than two, and $\dim \Z(\Lag) = 1$.  Let $\mathfrak{D}$ and $\mathfrak{W}$ be linear subspaces of $\Lag$ such that 
  \begin{equation}\label{eq:Kirillov-direct-sum}
  \begin{split}
      \Z_2(\Lag) &= \mathfrak{D} \oplus \Z(\Lag)\quad \text{and}\qquad \Lag = \mathfrak{W} \oplus \C(\Z_2(\Lag),\Lag).\\
  \end{split}
  \end{equation}
   Then,  $ \dim \mathfrak{W}  = \dim \mathfrak{D}$.
  Moreover, for fixed $0\neq Z\in \Z(\Lag)$, if $Y_1,\cdots,Y_n$ is a basis for $\mathfrak{D}$, then there exists a unique basis $X_1,\cdots,X_n$ of $\mathfrak{W}$ such that
  $[X_i,Y_j] = \delta_{ij}Z$.
\end{lemma}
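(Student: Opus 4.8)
The plan is to reduce the whole statement to ordinary bilinear algebra, exploiting the hypothesis $\dim \Z(\Lag)=1$. Fix once and for all the nonzero $Z\in\Z(\Lag)$ from the statement, so $\Z(\Lag)=\langle Z\rangle$. The crucial observation is that every $D\in\mathfrak{D}\subset\Z_2(\Lag)$ satisfies $[\Lag,[\Lag,D]]=0$, which is exactly the assertion that $[\Lag,D]\subset\Z(\Lag)=\langle Z\rangle$. Consequently, for each $W\in\Lag$ and $D\in\mathfrak{D}$ there is a unique scalar $\B(W,D)\in\R$ with $[W,D]=\B(W,D)\,Z$, and this defines a bilinear form $\B:\Lag\times\mathfrak{D}\to\R$. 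This is the device that converts the nested-commutator condition defining $\Z_2(\Lag)$ into a genuine $\R$-valued pairing, and it is the one place where one-dimensionality of the center is used in an essential way.

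Next I would compute the two radicals of $\B$. The left radical is $\{W\in\Lag: [W,D]=0\ \text{for all}\ D\in\mathfrak{D}\}$; since $\Z_2(\Lag)=\mathfrak{D}\oplus\Z(\Lag)$ and $[\Lag,\Z(\Lag)]=0$, the condition $[W,\mathfrak{D}]=0$ is equivalent to $[W,\Z_2(\Lag)]=0$, so the left radical is precisely $\C(\Z_2(\Lag),\Lag)$. The right radical, taken inside $\mathfrak{D}$, is $\{D\in\mathfrak{D}: [W,D]=0\ \text{for all}\ W\in\Lag\}=\mathfrak{D}\cap\Z(\Lag)=\{0\}$, the last equality being the directness of the sum $\Z_2(\Lag)=\mathfrak{D}\oplus\Z(\Lag)$. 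The rank--nullity identity for bilinear forms, namely $\dim\Lag-\dim(\text{left radical})=\dim\mathfrak{D}-\dim(\text{right radical})$, then yields $\dim\Lag-\dim\C(\Z_2(\Lag),\Lag)=\dim\mathfrak{D}$. Since $\mathfrak{W}$ is a complement of $\C(\Z_2(\Lag),\Lag)$ in $\Lag$, the left-hand side equals $\dim\mathfrak{W}$, and the first assertion $\dim\mathfrak{W}=\dim\mathfrak{D}$ follows.

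For the second assertion I would upgrade $\B$ to a perfect pairing on $\mathfrak{W}\times\mathfrak{D}$. Restricting $\B$ there, the left radical is $\mathfrak{W}\cap\C(\Z_2(\Lag),\Lag)=\{0\}$ by directness of $\Lag=\mathfrak{W}\oplus\C(\Z_2(\Lag),\Lag)$. For the right radical, observe that every $C\in\C(\Z_2(\Lag),\Lag)$ annihilates $\mathfrak{D}$, so for $D\in\mathfrak{D}$ one has $[\mathfrak{W},D]=0$ if and only if $[\Lag,D]=0$, i.e. $D\in\mathfrak{D}\cap\Z(\Lag)=\{0\}$; hence the restricted pairing is nondegenerate on both sides. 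A nondegenerate pairing between two vector spaces of equal dimension is perfect, so it induces an isomorphism $\mathfrak{W}\cong\mathfrak{D}^*$ via $X\mapsto\B(X,\cdot)$. Given the basis $Y_1,\dots,Y_n$ of $\mathfrak{D}$, the vectors $X_1,\dots,X_n$ corresponding to its dual basis are the unique elements of $\mathfrak{W}$ with $\B(X_i,Y_j)=\delta_{ij}$, that is $[X_i,Y_j]=\delta_{ij}Z$; uniqueness is immediate from injectivity of the isomorphism.

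The only genuinely nontrivial step is the first: recognizing that the definition of $\Z_2(\Lag)$, combined with one-dimensionality of the center, packages the bracket into a single bilinear form, after which everything reduces to standard rank--nullity and dual-basis arguments. I would also remark that the hypothesis that the step exceeds two is not actually required by this argument; it merely guarantees $\Z_2(\Lag)\supsetneq\Z(\Lag)$, hence $\mathfrak{D}\neq\{0\}$, placing the lemma in its intended nonabelian, non-Heisenberg setting.
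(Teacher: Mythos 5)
Your argument is correct, but note that the paper does not actually prove this lemma: it is imported verbatim as Theorem 3.1 of Beltita--Beltita \cite{Kirillov-lemma}, so your write-up supplies a self-contained proof where the paper offers only a citation. Each step checks out: since $\mathfrak{D}\subset\Z_2(\Lag)$, the definition of the second center gives $[\Lag,\mathfrak{D}]\subset\Z(\Lag)=\langle Z\rangle$, so $\B(W,D)$ is well defined; the left radical of $\B$ on $\Lag\times\mathfrak{D}$ is exactly $\C(\Z_2(\Lag),\Lag)$ because $[W,\Z(\Lag)]=0$ lets you pass between $\mathfrak{D}$ and all of $\Z_2(\Lag)=\mathfrak{D}\oplus\Z(\Lag)$; the right radical is $\mathfrak{D}\cap\Z(\Lag)=\{0\}$ by directness; rank--nullity then gives $\dim\mathfrak{W}=\dim\mathfrak{D}$; and nondegeneracy of the restriction to $\mathfrak{W}\times\mathfrak{D}$ follows from the two direct-sum hypotheses together with your (correct) observation that $\C(\Z_2(\Lag),\Lag)$ annihilates $\mathfrak{D}$, so the dual-basis construction and its uniqueness go through. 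Your method is in fact the same device the authors themselves deploy one level down: in the proof of Proposition \ref{p:dimZeq1} they define a pairing by $[X,Y]=\B(X,Y)Z_0$ on $\mathfrak{X}\times\mathfrak{D}$ and extract a dual basis by a Gram--Schmidt-type procedure --- except that there nondegeneracy is extracted from the $\Laa$-simplicity assumption, whereas you derive it structurally from the definitions of $\Z_2(\Lag)$ and the centralizer, which is what makes your proof genuinely cover the cited theorem rather than a special case. Your closing remark is also accurate: for step two one has $\Z_2(\Lag)=\Lag$ and $\C(\Z_2(\Lag),\Lag)=\Z(\Lag)$, and your argument degenerates gracefully to the classical Kirillov/Heisenberg situation, which the paper instead handles separately via \cite[Remark 2.4]{Kirillov-lemma} in the proof of Theorem \ref{prp:genericA}.
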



 To apply the above result in our setting, we must first establish the relationship between the second center $\Z_2(\Lag)$
and the maximal abelian ideal $\Laa$ appearing in the statement of Theorem~\ref{prp:genericA}. This is accomplished in the following lemma, under the hypotheses of Theorem~\ref{prp:genericA}; that is, $\Lag$ is metabelian, has a one-dimensional center, is stratifiable, and $\Laa\supset [\Lag,\Lag]$.

  \begin{lemma}\label{lem:step-one-dim-CZ2a}
    Let $\Lag$ be a \ma stratified Lie algebra of step $s>2$ and let  
    $\Laa\subset \Lag$ be a maximal
    abelian ideal containing $[\Lag,\Lag]$.
    If $\dim\Z(\Lag) = 1$, then
    \begin{enumerate}
        \item[{\em (a)}] $\Z_2(\Lag) \subset \Laa$,
        \item[{\em (b)}] $\Laa=\C(\Z_2(\Lag),\Lag)$.
    \end{enumerate}
    \end{lemma}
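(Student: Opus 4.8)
The plan is to exploit the stratification throughout. Write $\Lag = \V_1\oplus\cdots\oplus\V_s$ with $s>2$; since $\dim\Z(\Lag)=1$ and $\V_s\subseteq\Z(\Lag)$ with $\V_s\ne\{0\}$, we have $\V_s=\Z(\Lag)$. Because $\Laa\supseteq[\Lag,\Lag]=\bigoplus_{a\ge 2}\V_a$, the ideal $\Laa$ is graded, $\Laa=(\Laa\cap\V_1)\oplus\bigoplus_{a\ge 2}\V_a$. One checks that both $\Z_2(\Lag)$ and $\C(\Z_2(\Lag),\Lag)$ are graded ideals (the defining conditions $[\Lag,[\Lag,W]]=0$ and $[\Z_2(\Lag),W]=0$ decouple across layers). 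I would first prove (a), then deduce the easy inclusion $\Laa\subseteq\C(\Z_2(\Lag),\Lag)$, and finally obtain (b) by showing that $\C(\Z_2(\Lag),\Lag)$ is abelian and invoking maximality of $\Laa$.

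For (a), since $\Z_2(\Lag)$ is graded and $\Z_2(\Lag)\cap\V_a\subseteq\V_a\subseteq[\Lag,\Lag]\subseteq\Laa$ for $a\ge 2$, it suffices to show $\Z_2(\Lag)\cap\V_1=\{0\}$. If $W\in\Z_2(\Lag)\cap\V_1$ then $[\Lag,W]\subseteq\Z(\Lag)=\V_s$, whereas $[\V_1,W]\subseteq\V_2$; as $s>2$ these layers meet only in $\{0\}$, so $[\V_1,W]=0$. Since $\V_1$ generates $\Lag$, a short Jacobi induction (using $[\,[\V_1,\V_{a-1}],W\,]\subseteq[\V_1,[\V_{a-1},W]]+[\V_{a-1},[\V_1,W]]$) propagates this to $[\Lag,W]=0$, i.e. $W\in\Z(\Lag)=\V_s$; but $W\in\V_1$ and $s\ne 1$, forcing $W=0$.

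For (b), the inclusion $\Laa\subseteq\C(\Z_2(\Lag),\Lag)$ is immediate from (a): $\Z_2(\Lag)\subseteq\Laa$ and $\Laa$ is abelian, so $[\Z_2(\Lag),\Laa]=0$. The substance is the reverse inclusion, which I would get by proving that $N:=\C(\Z_2(\Lag),\Lag)$ is abelian (it is then an abelian ideal containing the maximal abelian ideal $\Laa$, hence equal to it). Writing $N=(N\cap\V_1)\oplus\bigoplus_{a\ge 2}\V_a$ and using metabelianness to kill $[\bigoplus_{a\ge2}\V_a,\bigoplus_{b\ge 2}\V_b]$, abelianness of $N$ reduces to $[N\cap\V_1,[\Lag,\Lag]]=0$ and $[N\cap\V_1,N\cap\V_1]=0$. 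The engine for both is a filling observation: a homogeneous $u\in\V_a$ with $2\le a\le s-1$ and $[\V_1,u]=0$ must vanish (by metabelianness $[\Lag,u]=[\V_1,u]=0$, so $u\in\Z(\Lag)=\V_s$, impossible for $a<s$). Given $W\in N\cap\V_1$, I would show $[W,\V_a]=0$ for all $a\ge 2$ by downward induction on $a$: the case $a=s$ is trivial, and the base $a=s-1$ holds because $\V_{s-1}\subseteq\Z_2(\Lag)$ (its double bracket lands in $[\Lag,\V_s]=0$) while $W$ centralizes $\Z_2(\Lag)$; in the step, for $v\in\V_a$ and $x\in\V_1$ the Jacobi identity gives $[x,[W,v]]=[[x,W],v]+[W,[x,v]]$, where the first term dies by metabelianness ($[x,W]\in\V_2\subseteq[\Lag,\Lag]$ and $v\in[\Lag,\Lag]$) and the second by the inductive hypothesis ($[x,v]\in\V_{a+1}$), so $[\V_1,[W,v]]=0$ and the filling observation yields $[W,v]=0$. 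This proves $[N\cap\V_1,[\Lag,\Lag]]=0$; feeding it back into the same Jacobi-plus-filling computation with a second element of $N\cap\V_1$ in place of $v$ gives $[N\cap\V_1,N\cap\V_1]=0$, completing the proof that $N$ is abelian.

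The main obstacle is exactly this reverse inclusion in (b): unlike (a) and the easy half of (b), it genuinely requires the joint use of the stratification (to run the layer-by-layer induction), metabelianness (to annihilate the cross term in every Jacobi expansion), the one-dimensional center and the hypothesis $s>2$ (to make the filling observation bite). Everything else is bookkeeping once the gradings of $\Z_2(\Lag)$, $\C(\Z_2(\Lag),\Lag)$ and $\Laa$ are recorded.
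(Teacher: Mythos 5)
Your proof is correct, and for the hard half of (b) it takes a genuinely different route from the paper's. For (a) the two arguments are close in spirit: the paper identifies $\Z_2(\Lag)=\V_{s-1}\oplus\V_s$ explicitly and notes that for $s>2$ this sits inside $[\Lag,\Lag]\subseteq\Laa$, whereas you only rule out a $\V_1$-component of $\Z_2(\Lag)$ using its gradedness; both work. For (b), the paper fixes a complement $\mathfrak{X}\subseteq\V_1$ of $\Laa$, first proves $\C(\Laa,\Lag)=\Laa$ from maximality, and then shows $\mathfrak{X}\cap\C(\Z_2(\Lag),\Lag)=\{0\}$ by an \emph{upward} iteration: starting from some $Y_1\in\Laa$ with $[X,Y_1]\neq 0$, it repeatedly brackets with elements of $\mathfrak{X}$ (killing the cross term in Jacobi by metabelianness) until it produces a witness in $\V_{s-1}\subseteq\Z_2(\Lag)$ that $X$ fails to centralize. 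You instead prove that $N:=\C(\Z_2(\Lag),\Lag)$ is itself an abelian ideal, via a \emph{downward} induction through the layers driven by your filling observation, and then absorb $N$ into $\Laa$ by maximality in one stroke. The ingredients are identical (stratification, metabelianness, $\dim\Z(\Lag)=1$, $s>2$, maximality of $\Laa$), but maximality is deployed at opposite ends: the paper uses it at the start to produce the seed $Y_1$, you use it only at the very end. Your version buys a slightly stronger intermediate statement (the centralizer of the second center is abelian) and avoids choosing a complement; the paper's is more constructive, exhibiting for each $X\notin\Laa$ a concrete element of $\Z_2(\Lag)$ that it moves. Two small points to tidy if you write this up: the decoupling argument showing $\C(\Z_2(\Lag),\Lag)$ is graded presupposes that $\Z_2(\Lag)$ is graded (so that it is spanned by homogeneous elements), so the two checks must be done in that order; and in fact the only gradedness of $N$ you actually use, namely $N=(N\cap\V_1)\oplus\bigoplus_{a\geq 2}\V_a$, already follows from $[\Lag,\Lag]\subseteq\Laa\subseteq N$ without any decoupling.
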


\begin{proof}
(a) Consider the stratification 
$$\Lag = \V_1 \oplus \dots \oplus \V_s, $$
and note that the condition that $\dim\Z(\Lag) = 1$ implies that $\V_s = \Z(\Lag)$. Indeed, by the grading  $\V_s \subset \Z(\Lag)$ and since $\V_s$ is
non-trivial we must have $\V_s = \Z(\Lag)$.

 Next, we claim that we also have $\Z_2(\Lag)=\V_{s-1}\oplus \V_s$. To see this, note
 that for every stratified Lie algebra of step $s$ one has $\V_{s-1} \oplus \V_{s} \subset \Z_2(\Lag)$. On the other
 hand, if $W\in \Z_2(\Lag)$, then, by definition 
 of $\Z_2(\Lag)$, we have $[\Lag,W]\subset \Z(\Lag)=\V_s$. In 
 particular,  $[\V_1,W]\subset \V_s$ 
 which by the grading implies that $W\in \V_{s-1}\oplus \V_s$.
 Therefore, $\Z_2(\Lag)\subset \V_{s-1}\oplus \V_s$, proving  that $\Z_2(\Lag)=\V_{s-1}\oplus \V_s$ as claimed.

 Finally, for any stratified  Lie algebra we always have 
 \begin{equation}
 \label{eq:auxCenter1strat5}
     [\Lag,\Lag] = \V_2 \oplus \dots \oplus \V_s.
 \end{equation}
 Considering 
 that $s>2$ we conclude that  $\Z_2(\Lag)=\V_{s-1}\oplus \V_s \subset [\Lag,\Lag] $. Combining this with the
 hypothesis that 
 $[\Lag,\Lag]\subset \Laa$ implies that $\Z_2(\Lag)\subset \Laa$
 as required.

(b) We begin by observing that the hypothesis that
$[\Lag,\Lag]\subset \Laa$ together with Eq.~\eqref{eq:auxCenter1strat5} imply 
\begin{equation}
 \label{eq:auxCenter1strat6}
  \V_2 \oplus \dots \oplus \V_s \subset \Laa.
\end{equation}
We now show that there exists a direct complement
$\mathfrak{X}$ of $\Laa$ in $\Lag$ which is contained in $\V_1$.
In other words,  $\mathfrak{X}$ is such that 
\begin{equation}
\label{eq:auxCenter1strat2}
    \Lag=\mathfrak{X}\oplus \Laa, \qquad \mathfrak{X}\subset \V_1.
\end{equation}
For this matter, simply define 
 $\mathfrak{X}$ as any direct
 complement of $\V_1 \cap \Laa$ in $\V_1$. Namely,
 $\mathfrak{X}$ is chosen such that 
 $$
 \V_1=(\V_1\cap \Laa)\oplus \mathfrak{X}.
 $$
It is clear that this definition of $\mathfrak{X}$ satisfies $\mathfrak{X}\cap \Laa=\{0\}$ and $\mathfrak{X}\subset \V_1$ as required.  Finally, we check
that $\Lag=\Laa\oplus \mathfrak{X}$ by using   
Grassmann's formula to compute
\begin{equation*}
    \begin{split}
\dim \mathfrak{X} &= \dim \V_1 - \dim (\V_1 \cap \Laa) \\
&= \dim \V_1 - \left (  \dim \V_1 +\dim \Laa - \dim (\V_1+\Laa)
\right )\\  &=  \dim \Lag -\dim \Laa,
\end{split}
\end{equation*}
where, in the last equality, we have used  
$\V_1 +\Laa =\Lag$ which is an easy consequence of 
Eq.~\eqref{eq:auxCenter1strat6}.

We now prove that our hypotheses on $\Laa$ imply 
\begin{equation}
\label{eq:auxCenter1strat4}
    \C(\Laa,\Lag)=\Laa.
\end{equation}
Given that $\Laa$ is abelian, we obviously have $\Laa\subset\C(\Laa,\Lag)$. On the other hand, 
if $0\neq X\in \mathfrak{X}\subset \V_1$,  we claim that
there exists $Y\in \Laa$
such that $[X,Y]\neq 0$. Indeed, if this were not the
case, then 
$\tilde \Laa :=\Laa\oplus \langle X\rangle $ would be an abelian 
subalgebra. Moreover, since $X\in \V_1$ and $\Lag$ is stratified, it is easy to check, using Eq.~\eqref{eq:auxCenter1strat6}, that $\tilde \Laa$ is an
ideal, contradicting the maximality of $\Laa$. This,
together with the decomposition $\Lag=\mathfrak{X}\oplus \Laa$,
allows us to conclude that if $ W\in \Lag \setminus  \Laa$, there
exists $Y\in \Laa$ such that $[W,Y]\neq 0$, which is equivalent
to the statement $\Laa\supseteq\C(\Laa,\Lag)$ which 
proves Eq.~\eqref{eq:auxCenter1strat4}.

Next, using that $\Z_2(\Lag)\subset \Laa$ (as established in item (a)), we immediately 
obtain
$\C(\Laa,\Lag)\subset \C(\Z_2(\Lag),\Lag)$, which in
view of Eq.~\eqref{eq:auxCenter1strat4} is equivalent to 
\begin{equation}
\label{eq:auxCenter1strat1}
   \Laa\subset \C(\Z_2(\Lag),\Lag),
\end{equation}
so we only need to prove that the opposite inclusion holds.
We will show that 
\begin{equation}
\label{eq:auxCenter1strat3}
   \mathfrak{X}\cap \C(\Z_2(\Lag),\Lag)=\{0\},
\end{equation}
which, together with Eqs.~\eqref{eq:auxCenter1strat2}  and \eqref{eq:auxCenter1strat1},  implies  $\C(\Z_2(\Lag),\Lag)\subset \Laa$ as required. We will make use the identities
$$
\Z(\Lag)=\V_s, \qquad \Z_2(\Lag)=\V_{s-1}\oplus \V_s,
$$
established in the proof of item (a) above.

Proving Eq.~\eqref{eq:auxCenter1strat3} amounts to showing that
for every  $0\neq X\in \mathfrak{X}$, there exists  $Y\in \Z_2(\Lag)$ such that $[X,Y]\neq 0$. Fix then $0\neq X\in \mathfrak{X}$ and let us construct such $Y$.   In view of Eq.~\eqref{eq:auxCenter1strat4}, 
the condition $X\notin \Laa$ implies that there exists  $Y_1\in \Laa$ such that $[X,Y_1] \neq 0$. Suppose  $[X,Y_1]\in \Z(\Lag)=\V_s$.
      Considering that $X\in \V_1$ (by Eq.~\eqref{eq:auxCenter1strat2}), the stratification
      of $\Lag$ implies that $Y_1\in \V_{s-1}\subset \Z_2(\Lag)$, so we may take 
      $Y := Y_1$. 
      If instead, $[X,Y_1] \notin \Z(\Lag)$ then there exists  $X_1\in \Lag$ such that $[X_1,[X,Y_1]] \neq 0$.
Considering that $[X,Y_1]\in \Laa$ and 
$\Laa$ is abelian, we must have $X_1\notin \Laa$ and we
may assume $X_1\in \mathfrak{X}$. By the Jacobi
identity, we have
\begin{equation*}
    \begin{split}
    0\neq [X_1,[X,Y_1]] &= [[X_1,X],Y_1]+[X,[X_1,Y_1]] \\
    & = [X,[X_1,Y_1]],
    \end{split}
\end{equation*}
where we have used that $[[X_1,X],Y_1]=0$ since 
$[X_1,X]\in [\Lag,\Lag]\subset  \Laa$ and $\Laa$ is abelian.
Therefore, the vector $Y_2 := [X_1,Y_1]\in \Laa$ is such that $[X,Y_2] \neq 0$. If $[X,Y_2] \in \Z(\Lag)$, then we argue
as above to conclude that $Y_2\in \Z_2(\Lag)$ and we take 
$Y:=Y_2$. Otherwise, we repeat the construction  to find
$X_2\in \mathfrak{X}$ such that $Y_3:=[X_2,[X_1,Y_1]]\in \Laa$ satisfies
$[X,Y_3]\neq 0$, and so on. In this way, we construct
a list of vectors $X_1,\dots, X_{a-1}\in \mathfrak{X}$ such that
$Y_a:=[X_{a-1},[X_{a-2},\cdots , [X_1,Y_1]\cdots]]\in \Laa$
satisfies $[X, Y_a]\neq 0$. The process must finalize
at some point due to the grading of the algebra. In other
words, there
exists a certain  $r\geq 1$ for which $0\neq [X,Y_r]\in \V_s=\Z(\Lag)$
and, repeating the argument above, we conclude that 
$Y:=Y_r\in \Z_2(\Lag)$.

\end{proof}

We are now ready to give the proof of Theorem \ref{prp:genericA}.

\begin{proof}[Proof of Theorem \ref{prp:genericA} ] 
   Every nilpotent Lie algebra of step $2$ and  $1$-dimensional center is a Heisenberg algebra \cite[Remark 2.4]{Kirillov-lemma}.  We showed that the Heisenberg algebra is $\Laa$-simple in Subsection~\ref{subsub:Hei-group}. Therefore, the statement is true for the step $2$ case. 
    
    If the step $s\geq 3$, we apply Lemmas \ref{lem:kirilloc-A-simple} and  \ref{lem:step-one-dim-CZ2a}.
    Specifically, let $\mathfrak{X}$ be any complement of $\Laa$
    in $\Lag$ and $\mathfrak{D}$ be any subspace such that 
    $\Z_2(\Lag)=\mathfrak{D}\oplus \Z(\Lag)$ (as in the statement of
    Lemma \ref{lem:kirilloc-A-simple}).
    Let $Y_1, \dots, 
    Y_n$ be a basis of $\mathfrak{D}\subset \Z_2(\Lag)$. Considering that $\Laa=\C(\Z_2(\Lag),\Lag)$
    (by Lemma \ref{lem:step-one-dim-CZ2a}), and taking $\mathfrak{W}=\mathfrak{X}$, we conclude from Lemma \ref{lem:kirilloc-A-simple}  the
    existence of a basis  $X_1, \dots, X_n$ of $\mathfrak{X}$ 
    such that $0\neq [X_i,Y_i]\in \Z(\Lag)$. But the linearly independent vectors $Y_1,\dots, Y_n\in \Laa$ since
    $\Z_2(\Lag)\subset \Laa$ by Lemma \ref{lem:step-one-dim-CZ2a}. Hence, the conditions of an $\Laa$-simple algebra in Definition~\ref{def:A-simple} are satisfied.
\end{proof}

\subsubsection{The jet space $\J^k(\R^n,\R^m)$}

It is well-known that the Heisenberg group $\mathbb{H}^{3}$ is diffeomorphic as a Carnot group to the jet-space $\J^1(\R,\R)$ \cite{golo2023jet}. In this section, we will briefly introduce the jet space $\J^k(\R^n,\R^m)$ as a Carnot group and show it is metabelian. We refer the reader to  \cite{warhurst2005jet} for a more extensive explanation of the Carnot structure of the jet spaces. Our goal in this section is to prove that the jet space $\J^k(\R^n,\R^m)$ is $\Ag$-simple
which is the content of the following.

\begin{thm}\label{thm:jet-space}
    The jet-space $\J^k(\R^n,\R^m)$ is an $\Ag$-simple group. 
\end{thm}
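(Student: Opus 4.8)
The plan is to realise $\J^k(\R^n,\R^m)$ through its standard Carnot coordinates and then exhibit the data of Definition \ref{def:A-simple} explicitly. First I would recall (following \cite{warhurst2005jet}) that $\Lag=\mathrm{lie}(\J^k(\R^n,\R^m))$ is coordinatised by base variables $x_1,\dots,x_n$ together with fibre variables $u^j_\alpha$, one for each $1\le j\le m$ and each multi-index $\alpha\in\mathbb{N}^n$ with $|\alpha|\le k$. Writing $e_i$ for the $i$-th elementary multi-index, the first stratum is spanned by the truncated total-derivative fields
\begin{equation*}
X_i=\partial_{x_i}+\sum_{j=1}^m\sum_{|\alpha|<k}u^j_{\alpha+e_i}\,\partial_{u^j_\alpha},\qquad i=1,\dots,n,
\end{equation*}
together with the top fibre fields $\partial_{u^j_\beta}$ with $|\beta|=k$. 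The only nonvanishing brackets are governed by the single relation
\begin{equation*}
[X_i,\partial_{u^j_\alpha}]=-\,\partial_{u^j_{\alpha-e_i}},
\end{equation*}
with the convention $\partial_{u^j_{\alpha-e_i}}:=0$ when $\alpha_i=0$; in particular the $X_i$ commute among themselves. Assigning weight $r+1$ to $\partial_{u^j_\alpha}$ when $|\alpha|=k-r$ exhibits the stratification of step $k+1$, and direct inspection of the relation above shows that the centre is $\Z(\Lag)=\langle \partial_{u^j_0}:1\le j\le m\rangle$ (of dimension $m$).

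Next I would identify the maximal abelian ideal. Take $\Laa$ to be the span of all fibre fields $\partial_{u^j_\alpha}$ (all $j$ and all $|\alpha|\le k$), with complement $\mathfrak{X}=\langle X_1,\dots,X_n\rangle$, so that $\Lag=\mathfrak{X}\oplus\Laa$ and $\dim\mathfrak{X}=n$. Since distinct partial derivatives commute, $\Laa$ is abelian; and because every bracket $[X_i,\partial_{u^j_\alpha}]$ again lies in $\Laa$ while $[X_i,X_{i'}]=0$, we get $[\Lag,\Lag]\subseteq\Laa$, so $\Lag$ is metabelian. Maximality of $\Laa$ follows because any $W=\sum_i c_iX_i+(\text{fibre})$ with some $c_{i_0}\neq0$ satisfies $[W,\partial_{u^j_{e_{i_0}}}]=-c_{i_0}\,\partial_{u^j_0}\neq0$, so $\Laa$ cannot be enlarged while staying abelian; this identifies $\Laa$ as the Lie algebra of a maximal abelian normal subgroup $\Ag$.

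Finally I would produce the $\Laa$-simple witnesses. The point is that $[X_i,\partial_{u^j_\alpha}]$ lands in $\Z(\Lag)$ exactly when $\alpha=e_i$, in which case $[X_i,\partial_{u^j_{e_i}}]=-\partial_{u^j_0}$. Fixing $j=1$ and setting $Y_i:=\partial_{u^1_{e_i}}\in\Laa$ for $i=1,\dots,n$, the vectors $Y_1,\dots,Y_n$ are linearly independent (the multi-indices $e_1,\dots,e_n$ are distinct) and satisfy
\begin{equation*}
0\neq[X_i,Y_i]=-\partial_{u^1_0}\in\Z(\Lag),\qquad i=1,\dots,n.
\end{equation*}
This is precisely the data required by Definition \ref{def:A-simple}, whence $\J^k(\R^n,\R^m)$ is $\Ag$-simple. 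The only genuinely delicate step is the first one: verifying that the truncated operators $X_i$ really do commute and that the bracket relation and the centre are as claimed, which is a bookkeeping exercise with multi-indices that can be imported wholesale from \cite{warhurst2005jet}. Once the structure constants are in hand, the $\Laa$-simplicity is immediate from the single computation $[X_i,\partial_{u^1_{e_i}}]=-\partial_{u^1_0}$, and notably this explicit construction works for all $m\ge1$, i.e.\ it does not require the one-dimensional centre hypothesis of Theorem \ref{prp:genericA}.
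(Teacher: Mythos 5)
Your proposal is correct and follows essentially the same route as the paper: write down the truncated total-derivative fields $X_i$ and the fibre fields, read off the single family of nontrivial brackets, take $\Laa$ to be the span of all fibre directions with complement $\mathfrak{X}=\langle X_1,\dots,X_n\rangle$, and exhibit the witnesses $Y_i=\partial_{u^1_{e_i}}$ with $[X_i,Y_i]=-\partial_{u^1_0}\in\Z(\Lag)$, exactly as in the paper's choice $Y_j:=Y^1_{e_j}$. The only differences are cosmetic (a sign convention in the bracket and your added verification that $\Laa$ is maximal abelian, which the paper asserts without proof).
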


 Let us briefly recall the preliminaries to prove the theorem. If $U\subset \R^n$ is an open set and $x_0 \in U$, then we say that two functions $\textbf{f}, \textbf{g} \in C^k(U,\R^m)$  are equivalent at $x_0$, denoted $\textbf{f}\sim_{x_0}\textbf{g}$, if and only if their Taylor expansions of order $k$ at $x_0$ are equal. The {\bf \textit{$k$-jet space}} over $U$ is given by
$$ \J^k(U,\R^m) = \bigcup_{x_0\in U} C^k(U,\R^m)/\sim_{x_0}. $$
We will denote elements in $\J^k(U,\R^m)$ by $j^k_{x_0}(\textbf{f})$.

Let us make this construction in detail for the case $\mathcal{J}^k(\R^n,\R)$ for simplicity.
The number of partial derivatives of order $k$ for a function $f:\R^n\to \R$ is 
$$\begin{pmatrix}
    n+k-1 \\
    k
\end{pmatrix} = \frac{(n+k-1)!}{k!(n-1)!}.$$
If $I$ is a $k$-index, i.e., $I =(i_1,\cdots,i_n)$ satisfies $|I| = i_1=\cdots+i_n = k$, then we will use the following notation
$$ \partial_I f(x_0) = \frac{\partial^k f}{\partial x_1^{i_1} \cdots \partial x_n^{i_n}}(x_0).$$
We denote the set of $k$-indexes by $I(k)$ and let
$$ \overline{I}(k) = I(0)\cup \cdots \cup I(k).$$
For $I\in \overline{I}(k)$ and $t\in \R^n$, we define
$$ I! = i_1! i_2!\cdots i_n!,\;\;\text{and}\;\; t^I = (t_1)^{i_1}(t_2)^{i_2}\cdots (t_n)^{i_n}.$$
The $k$-th oder taylor polynomial of $f$ at $x_0$ is given by
$$ T^k_{x_0}(f)(t) = \sum_{I\in \overline{I}(k)}\partial_I f(x_0) \frac{(t-x_0)^I}{I!}. $$
Therefore, two functions $f\sim_{x_0}g$ if and only if $T^k_{x_0}(f)(t) = T^k_{x_0}(g)(t)$. 

If $\textbf{f} = (f^1,\dots,f^m)$ is a map $f:U \to \R^m$, then we apply the above construction to the coordinates $f^{\ell}:U\to \R$. We can endow $\mathcal{J}^k(\R^n,\R^m)$ with global coordinates as follows, we denote by $ (x,u^{(k)})$ the coordinates of the point $T^k_{x_0}(\textbf{f})(t)$, where 
$$x(j^k_{x_0}(\textbf{f})) = x_0,\;\;\text{and}\;\;u_I^{\ell}(j^k_{x_0}(\textbf{f})) = \partial_{I}f^{\ell}(x_0), \;\text{for}\;I\in \overline{I}(k),\;\; \ell =1,\dots,m.$$
So the formal definition of $u^{(k)}$ is the following
$$ u^{(k)} := \{ u^\ell_I : I \in \overline{I}(k),\;\;\ell = 1,\dots,m \}. $$

The jet space $\J^k(U,\R^m)$ has a natural distribution $\mathcal{D}^k_{x_0}$defined by the following set of Pfaffian  equations 
$$ 0 = du_I^{\ell} - \sum_{i=1}^n u^\ell_{I+e_i} dx^i,\;\text{for all}\;I \in \overline{I}(k-1),\;\text{and}\;\ell=1,\cdots,m.$$
The distribution $\mathcal{D}^k_{x_0}$ has rank $n+m(\frac{(n+k-1)!}{k!(n-1)!})$, and is globally framed by the vector fields
\begin{equation}
\label{eq:XYjet}
    \begin{split}
        X_i & := \frac{\partial}{\partial x_i}+ \sum_{\ell=1}^m\sum_{I \in \overline{I}(k-1)} u_{I+e_i} \frac{\partial}{\partial u_I^{\ell}},\;\;\text{where}\;\; i=1,\cdots,n,\\
        Y_I^\ell & := \frac{\partial}{\partial u_I^\ell},\;\;\text{where}\;\;I\in \overline{I}(k), \;\text{and}\;\ell=1,\cdots,m.
    \end{split}
\end{equation}
The non-trivial commutators are
\begin{equation}\label{eq:Lie-bra-j-spa}
    [ Y^\ell_{I+e_i},X_i] = Y^\ell_{I}, \;I \in \overline{I}(k-1), \;\text{and}\;\ell=1,\cdots,m.  
\end{equation}

Evaluating these vector fields at the origin $(x,u^{(k)})=(0,0)$, we define the Lie algebra $\Lag :=\mathfrak{j}^k(\R^n,\R^m)$ 
having the same commutation relations.
It admits the stratification 
$$ \Lag = V_1\oplus\cdots\oplus V_{k},$$
whose layers are given by
\begin{equation*}
    \begin{split}
        V_1 & = \mbox{span} \left \{ \frac{\partial}{\partial x_1},\cdots,\frac{\partial}{\partial x_m} \right \} \oplus \mbox{span} \left \{ \frac{\partial}{\partial u_I^{\ell}} \right \}_{I\in I(k)} ,\\
        V_j & = \left \{\frac{\partial}{\partial u_I^\ell}: \;I\in I(k-j+1)\;\text{and}\;\ell=1,\cdots,m \right \}, \;\text{where}\;j=2,\cdots,k.
    \end{split}
\end{equation*}
The Lie bracket relations show that $V_{j+1} = [V_1,V_{j}]$, where $j=1,\cdots,k$, and $0 = [V_{i},V_{j}]$ for all $i,j>1$. It follows that $\Lag$ is a $k$-step stratified \ma nilpotent Lie algebra, so $\G:=\mathcal{J}^k(\R^n,\R^m)$ is a \ma Carnot group. Actually, the maximal abelian ideal $\Laa$ and the center $\Z(\Lag)$ are given by
$$ \Laa = \langle Y_{I}^{\ell} \rangle_{I\in \overline{I}(k), \ell=1,\dots,m},\qquad \Z(\Lag) = \langle Y_{I}^{\ell} \rangle_{I\in I(0), \ell=1,\dots,m},  $$
where it is understood that the vector fields $Y_I^\ell$ defined by Eq.~\eqref{eq:XYjet} are evaluated at the origin. 
Via the  Baker–Campbell–Hausdorff formula, the group multiplication of $\mathcal{J}^k(\R^n,\R^m)$ may be determined, see \cite[Section 4]{warhurst2005jet}. 

We now present the proof of Theorem \ref{thm:jet-space}.
\begin{proof}[Proof of Theorem \ref{thm:jet-space}.] If we define  $\mathfrak{X}:=\langle X_i\rangle_{i=1,\dots, n}$  with the 
vector fields  $X_i$ defined by Eq.~\eqref{eq:XYjet}  evaluated at the origin,
then we have $\Lag=\mathfrak{X}\oplus \Laa$ and the commutation
relations \eqref{eq:Lie-bra-j-spa} show that the definition of $\Laa$-simple is
satisfied if we take $Y_j:=Y_{e_j}^1\in \Laa$. Indeed, 
 Eq.~\eqref{eq:Lie-bra-j-spa} implies that 
$$ [X_i,Y_j] = \delta_{ij} Y^1_{0}\in \Z(\Lag), \;\; \text{for}\;\; i,j=1,\cdots,n.$$ 
Therefore, $\J^k(\R^n,\R^m)$ is $\Ag$-simple. 
\end{proof}

\appendix

\section{ Self-contained proofs of Lemma~\ref{lem:StagesMyk} and 
Theorem~\ref{th:main-new}. }
\label{app}

We begin with the following proposition about 
the adjoint of the linear map $\T_\mu$. 

\begin{proposition}
\label{prop:AdTmu}
Suppose $\Z(\Lag)\subset \Laa$. 
The adjoint  map, $\T^*_{\mu}:(\Laa/\Z(\Lag)) \to \mathfrak{X}^*$, of the linear map $\T_\mu$ defined by
Eq.~\eqref{eq:defTmu} is given by
$$ \T^*_{\mu}(\overline{Y}) = \ad_{i(Y)}^*(\mu).$$
\end{proposition}

\begin{proof}
 If $X\in \mathfrak{X}$ and $\overline{Y} \in \Laa/\Z(\Lag)$, then, by definition of $\T_\mu$,
    \begin{equation*}
    \begin{split}
    \langle\T_{\mu}(X),\overline{Y}\rangle_{\Laa/\Z(\Lag)} & = \langle i^*(\ad^*_{-X}(\mu)),Y\rangle_{\Laa} = \langle \ad^*_{-X}(\mu),i(Y)\rangle_{\Lag} \\ 
    & = \langle \mu,\ad_{-X}(i(Y))\rangle_{\Lag} = \langle \mu,\ad_{i(Y)}(X)\rangle_{\Lag} \\
     & = \langle \ad^*_{i(Y)}(\mu),X\rangle_{\Lag}.\\
    \end{split}
    \end{equation*}
    To complete the proof that $\T^*_{\mu}(\overline{Y}) = \ad_{i(Y)}^*(\mu)$, we must verify that $\ad^*_{i(Y)}(\mu) \in \Laa^{\circ}=\mathfrak{X}^*$. Indeed, let $Y_1\in \Laa$ be arbitrary, then
    \begin{equation*}
    \begin{split}
    \langle \ad^*_{i(Y)}(\mu), Y_1 \rangle_{\Lag} & = \langle \mu,[Y, Y_1]\rangle_{\Lag} = 0,\\
    \end{split}
    \end{equation*}
    where we used that $\Laa$ is abelian. 
\end{proof}

We now present the proof of Lemma~\ref{lem:StagesMyk}, which we restate  
as Lemma~\ref{eq:stagesAppendix} to facilitate the reading of the proof of Theorem~\ref{th:main-new} below.

\begin{lemma}[Stages Hypothesis \cite{Mykytyuk2008663}] 
\label{eq:stagesAppendix}
Suppose that the dimension condition 
\eqref{eq:dim-cond} holds for $\mu\in \Lag^*$, and write $\nu=i^*(\mu)$. 
If $\tilde \mu\in \Lag^*$ satisfies $ i^*(\tilde \mu)=\nu$, then there exists  $a\in \Ag$ such that 
$$ \mu = \Ad^*_{a^{-1}}(\tilde \mu).$$ 
\end{lemma}

\begin{proof}
Since  the dimension condition \eqref{eq:dim-cond} holds, then  
$\Z(\Lag)\subset \Laa$
(Proposition \ref{prop:Zsubseta}) so
$\T_\mu$ is well-defined and injective (Proposition \ref{prop:Tmu}). Therefore,
$\T_\mu^*$ is surjective. 


For $Y\in \Laa$, putting  $a = \exp(i(Y))\in \Ag \subset \G$, and using the
the explicit form of $\T_\mu^*$ given by Proposition \ref{prop:AdTmu},
we have    
\begin{equation}\label{eq:hyp-SSH-proof}
\begin{split}
    \Ad^*_{a^{-1}}(\mu) & = e^{\ad^*_{-i(Y)}}(\mu) = \sum_{n=0}^\infty \frac{(-1)^n}{n!} (\ad^*_{i(Y)})^n(\mu) \\
    & = \mu - \T^*_{\mu}(\overline{Y}),
\end{split}
\end{equation}
    where we have used  $(\ad^*_{i(Y)})^n(\mu) = 0 $ if $n\geq 2$. Indeed, if $X \in \mathfrak{X}$ is an arbitrary element, we have $[Y,[Y,X]] =0$ since $\Laa$ is an abelian ideal. So
    $$ \langle (\ad^*_Y)^2(\mu), X \rangle_\Lag = \langle \mu, [Y,[Y,X]] \rangle_\Lag = 0. $$

    Now, the condition that   $ i^*(\mu) = i^*(\tilde \mu)$, implies  $\mu - \tilde \mu \in \ker i^* = \Laa^\circ = \mathfrak{X}^*$. Since $\T_\mu^*$ is surjective, there  exists $Y_0 \in \Laa$ such that $\T_{\mu}^*(\overline{Y_0}) = \mu - \tilde \mu$. Putting $a_0 = \exp(i(Y_0))$ and using Eq.~\eqref{eq:hyp-SSH-proof}, we conclude that
    $$\Ad^*_{a_0^{-1}}(\mu) = \mu - \T_{\mu}^*(\overline{Y_0}) = \tilde \mu, $$
    as required.
  \end{proof}

We are finally ready to present a self-contained proof of Theorem \ref{th:main-new}, which  is an adaptation of the more general reduction-by-stages theory of Marsden et al.~\cite{stages}. We follow closely the presentation of ``Point Reduction by Stages'' in Section 5.2 of \cite{stages}, and attempt to use the same notation throughout. Several parts of the argument simplify in our setting, since we work under more restrictive assumptions (abelianity of $\Ag$). Lemma~\ref{eq:stagesAppendix} will be used at several points.


\begin{proof}[Proof of Theorem \ref{th:main-new}]
Let $\mu\in \Lag^*$ satisfying the hypothesis of the 
theorem and write $\nu:=i^*(\mu)\in \Laa^*$. 
By item (a) of Proposition \ref{prop:trivialstages}, we have 
\begin{equation}
    \label{eq:GmuAappendix}
    \G_\mu\subset \Ag.
\end{equation}

On the other hand, in view
of Eq.~\eqref{eq:JAJG}, we have
$J^{-1}_\G(\mu)\subset J_\Ag^{-1}(\nu)$. Denote by $s_\mu:J^{-1}_\G(\mu)\hookrightarrow J_\Ag^{-1}(\nu)$
the inclusion map. Then $s_\mu$ is a smooth 
embedding. Indeed, $J^{-1}_\G(\mu)$ is the 
zero level set of the smooth map 
$$C_\mu: J_\Ag^{-1}(\nu)\to \Laa^\circ, \qquad C_\mu(m)=J_\G(m)-\mu,$$
which can be checked to be submersive (see \cite{Enzo}).

Denote by
$$
P_\mu:=J_\G^{-1}(\mu)/\G_\mu \quad \mbox{and} \quad P_\nu:=J_\Ag^{-1}(\nu)/\Ag,
$$
the symplectic reduced manifolds, and by
$\pi_\mu:J_\G^{-1}(\mu)\to P_\mu$ and $\pi_\nu:J_\Ag^{-1}(\nu)\to P_\nu$ the orbit projection maps (which, under our
assumptions are surjective 
submersions).

Let $F:P_\mu\to P_\nu$ be given by
\begin{equation}
\label{eq:defF-appendix}
    F(\pi_\mu (m)):=\pi_\nu\circ s_\mu (m), \qquad \mbox{where $m\in J_\G^{-1}(\mu).$}
\end{equation}
We prove below that $F$ is (i) well-defined, (ii) bijective 
and (iii) smooth.

\begin{enumerate}
    \item[(i)] Suppose that $\pi_\mu (m)=\pi_\mu (\tilde m)$ for 
$m, \tilde m\in J_\G^{-1}(\mu).$ Given that $\G_\mu\subset \Ag$, 
there exists $a\in \G_\mu\subset \Ag$ such that $\tilde m=a\cdot m$, which implies
\begin{equation*}
    \pi_\nu\circ s_\mu(m)= \pi_\nu\circ s_\mu(\tilde m).
\end{equation*}
In other words, the value of $F$ does not depend on 
the orbit representative
 of  $\pi_\mu (m)$ and is well-defined.

\item[(ii)] {\em $F$ is injective.} Suppose that $F(\pi_\mu(m))=F(\pi_\mu(\tilde m))$ for $m,\tilde m\in J_\G^{-1}(\mu)$. By definition of $F$, this implies the
existence of $a\in \Ag$ such that $\tilde m = a\cdot m$. In particular, we conclude that\footnote{$\G\cdot m$ denotes the $\G$-orbit through $m$. Similar obvious interpretations apply to    $\G_\mu\cdot m$ and   $\Ag\cdot m$ below.} 
$$\tilde m\in J_\G^{-1}(\mu)\cap (\G\cdot m).$$
Now recall the well-known identity (see e.g. item (ii)
of the ``Reduction Lemma" in \cite{stages}):
\begin{equation}
    \label{eq:RedLemma}
    J_\G^{-1}(\mu)\cap (\G\cdot m)=\G_\mu\cdot m,
\end{equation}
to conclude that $\tilde m=g\cdot m$ for some $g\in \G_\mu$.
Hence $\pi_\mu(m)=\pi_\mu(\tilde m)$ showing that $F$ is injective.

\noindent {\em $F$ is surjective.} Let $m\in J_\Ag^{-1}(\nu)$ and accept for the moment that
\begin{equation}
\label{eq:aux1-appendix}
    (\Ag \cdot m)\cap J_\G^{-1}(\mu)\neq \emptyset.
\end{equation}
Then, there exists a representative $\tilde m\in J_\G^{-1}(\mu)$
of $\pi_\nu(m)\in P_\nu$, and we have,
$$
F(\pi_\mu(\tilde m))=\pi_\nu(\tilde m)=\pi_\nu(m).
$$
 Therefore, the surjectivity of $F$ may be established by showing that 
Eq.~\eqref{eq:aux1-appendix} holds for all $m\in J_\Ag^{-1}(\nu)$. To prove
 this, let $\tilde \mu:=J_\G(m)\in \Lag^*.$ We have,
\begin{equation}
\label{eq:aux2-appendix}
    i^*(\tilde \mu) =i^* \circ J_\G( m)=J_\Ag(m)=\nu=i^*(\mu),
\end{equation}
 where we have used $J_\Ag=i^*\circ J_\G$ (Eq.~\eqref{eq:JAJG}). Therefore, 
 by Lemma \ref{eq:stagesAppendix}, there exists $a\in \Ag$ such  that $\mu =\Ad_{a^{-1}}^*\tilde \mu.$ Let $\hat m:=a\cdot m$. Then obviously, $\hat m\in \Ag\cdot m$.
 On the other hand, by the equivariance of $J_\G$,
 $$
 J_\G(\hat m)=J_\G(a\cdot m)=\Ad_{a^{-1}}^*(J_\G(m))=\Ad_{a^{-1}}^*(\tilde \mu)=\mu,
 $$
 proving that $\hat m\in    (\Ag \cdot m)\cap J_\G^{-1}(\mu)$
and hence the intersection \eqref{eq:aux1-appendix} is non-empty as claimed.
 \item[(iii)] The smoothness of $F$ follows from the smoothness of $\pi_\nu\circ
 s_\mu$ and the fact that $\pi_\mu$ is a surjective submersion.
\end{enumerate}

We now show that $F:P_\mu\to P_\nu$ is a symplectic map, i.e.,
$$
F^*\omega_\nu=\omega_\mu,
$$
where we recall that the symplectic reduced forms $\omega_\mu\in \Lambda^2(P_\mu)$ and
$\omega_\nu\in \Lambda^2(P_\nu)$ are characterized by the conditions
$$
\pi_\mu^*\omega_\mu=j_\mu^*\omega, \qquad \pi_\nu^*\omega_\nu=j_\nu^*\omega,
$$
 where $j_\mu:J_\G^{-1}(\mu)\hookrightarrow M$ and $j_\nu:J_\Ag^{-1}(\nu)\hookrightarrow M$ are the inclusion maps. For this, it is 
 sufficient to show that 
 $$
\pi_\mu ^*F^*\omega_\nu=j_\mu^*\omega.
$$
Using that $F\circ \pi_\mu=\pi_\nu \circ s_\mu$ by definition of $F$, we have,
\begin{equation*}\begin{split}
    \pi_\mu ^*F^*\omega_\nu& =(F\circ \pi_\mu)^* \omega_\nu = (\pi_\nu\circ s_\mu)^*\omega_\nu= s_\mu^*\pi_\nu^* \omega_\nu= s_\mu^*j_\nu^*\omega \\ &= (j_\nu \circ s_\mu)^*\omega =  j_\mu^*\omega,
    \end{split}
\end{equation*}
where, in the last identity, we have used  $j_\mu=j_\nu\circ s_\mu$ 
which follows from the definition of these inclusion maps.

To finalize the proof of the theorem, it remains only to show that $F$ has a smooth inverse,
which requires substantial work.
We first propose what such an inverse, that we denote as $\phi:P_\nu\to P_\mu$, should be. 
Let $\pi_\nu(m)\in P_\nu$ for some $m\in J_\Ag^{-1}(\nu)$
and let $\tilde \mu:=J_\G(m)\in \Lag^*$. Repeating the 
calculation \eqref{eq:aux2-appendix},
shows that $i^*(\tilde \mu)=\nu=i^*(\mu)$ and therefore, by  Lemma~\ref{eq:stagesAppendix}, there exists $a\in \Ag$ such that  $\mu=\Ad^*_{a^{-1}}\tilde \mu$. We define,
\begin{equation}
\label{eq:aux3-appendix}
    \phi(\pi_\nu(m))= \pi_\mu (a\cdot m).
\end{equation}
We first observe that the right hand side of this formula makes sense, since, by equivariance
of $J_\G$, we have $J_\G(a \cdot m)=\Ad^*_{a^{-1}}J_\G(m)=\Ad^*_{a^{-1}}\tilde \mu=\mu$. Hence, the point $a\cdot m$ belongs to $J_\G^{-1}(\mu)$ and we may apply $\pi_\mu$ to it.

 Similarly to what we did for $F$, we show  below that
(i) $\phi$ is well-defined,
(ii) $\phi$ is the inverse of $F$, and (iii) $\phi$ is smooth. 

\begin{enumerate}
    \item[(i)] Suppose that $\pi_\nu( \hat m)=\pi_\nu(m)$ for $\hat m,   m\in J_\Ag^{-1}(\nu)$ and let $a_0\in \Ag$ be such that $\hat m= a_0\cdot m$. We repeat the construction for $\hat m$ that we did for $m$ in the definition of $\phi$. Let $\hat \mu:=J_\G(\hat m)\in \Lag^*$. Again, since $i^*(\hat \mu)=i^*(\mu)$, by  Lemma~\ref{eq:stagesAppendix}, there exists
    $\hat a\in \Ag$ such that $\hat \mu=\Ad_{\hat a^{-1}}^*\mu$. As a consequence, we may write,
\begin{equation}
\label{eq:aux4-appendix}
    \phi(\pi_\nu(\hat m))=\pi_\mu(\hat a\cdot \hat m)=\pi_\mu(\hat aa_0\cdot  m).
\end{equation}
    Proving that $\phi$ is well-defined amounts to  checking that $\phi(\pi_\nu(m))=\phi(\pi_\nu(\hat m))$, which  in view
    of Eqs.~\eqref{eq:aux3-appendix} and \eqref{eq:aux4-appendix}, 
    is equivalent to:
    $$
    \pi_\mu(a\cdot m)=\pi_\mu(\hat aa_0\cdot  m).
    $$
    In other words, we must show the existence of $g\in \G_\mu$ such that
    $a\cdot m=(g\hat aa_0)\cdot  m$. By freeness of the $\G$-action on $M$, this is
    equivalent to showing that $a^{-1}g\hat aa_0$ is the identity element
    of $\G$. Therefore, to complete the proof, we only need to check that
    $g:=aa_0^{-1}\hat a^{-1}$ belongs to $\G_\mu$. But this is true since
    both points $a\cdot m$ and $\hat a a_0\cdot m$ belong to $J_\G^{-1}(\mu)$
    and also to the orbit $\G\cdot m$. As a consequence, they
    belong to the intersection $J_\G^{-1}(\mu)\cap \G\cdot m$ which
    by Eq.~\eqref{eq:RedLemma} equals $\G_\mu\cdot m$. Therefore, we conclude
    that $a$ and $\hat a a_0$ belong to $\G_\mu$, and therefore also $g=a(\hat a a_0)^{-1}\in \G_\mu$.

    \item[(ii)] Let $m\in J_\Ag^{-1}(\nu)$ and let $a\in \Ag$ be the group element
    which defines  $\phi (\pi_\nu(m))$ according to Eq.~\eqref{eq:aux3-appendix}.
    In view of the definition \eqref{eq:defF-appendix} of $F$, we have,
\begin{equation*}
    F\circ \phi(\pi_\nu(m))=F(\pi_\mu(a \cdot m))=\pi_\nu(s_\mu(a\cdot m))
    =\pi_\nu(a \cdot m)=\pi_\nu(m).
\end{equation*}
This shows that  $F=\phi^{-1}$ since we know that $F$ is bijective.

    \item[(iii)] Let $m_0\in J_\Ag^{-1}(\nu)$ and consider a local section $\Gamma:U\to \pi_\nu^{-1}(U)\subset J_\Ag^{-1}(\nu)$ where $U\subset P_\nu$ is
    a neighbourhood of $\pi_\nu(m_0)$. Consider the smooth map
    \begin{equation*}
        J_\G\circ \Gamma : U \to \Lag^*.
    \end{equation*}
    We claim that it takes values on $\Ag\cdot \mu$,
    which
    denotes the $\Ag$-orbit through $\mu$ of the 
    restriction of the coadjoint action of $\G$ on $\Lag^*$ to  $\Ag\subset \G$. Actually,
    we will show that $\left . J_\G \right |_{J_\Ag^{-1}(\nu)}$ does,
    which is enough. Indeed, let $m\in J_\Ag^{-1}(\nu)$, then
    $i^*(J_\G(m))=\nu$ by Eq.~\eqref{eq:JAJG} so, by Lemma~\ref{eq:stagesAppendix},  there exists $a\in \Ag$ such that $\Ad_{a^{-1}}^*(J_\G(m))=\mu$ and our claim holds. 
    
    Now, by Eq.~\eqref{eq:GmuAappendix}, we have $\G_\mu \subset \Ag$ so there is a natural diffeomorphism between the orbit
    $\Ag\cdot \mu$ and the quotient  $\Ag/\G_\mu$. We
    denote such
    diffeomorphism by $\mathcal{C}_\mu : \Ag\cdot \mu \to \Ag/\G_\mu$,
    which is characterised by the condition,
    \begin{equation}
   \label{eq:auxThmASmoothness2}
        \mathcal{C}_\mu (\Ad_{a^{-1}}^*(\mu))=\tau(a), \qquad a\in \Ag,
    \end{equation}
where $\tau: \Ag\to \Ag/\G_\mu$ is the principal bundle projection.

Consider the (smooth) composition 
   \begin{equation*}
       \mathcal{C}_\mu \circ  J_\G\circ \Gamma : U \to \Ag/\G_\mu.
    \end{equation*}
 Restricting the neighbourhood $U$ if necessary, we may consider
   a smooth lift 
\begin{equation*}
       (  \mathcal{C}_\mu \circ J_\G\circ \Gamma)^\ell : U \to \Ag,
    \end{equation*}
which we use to construct the smooth map 
\begin{equation*}
      \alpha: U\to \Ag, \qquad \alpha (\pi_\nu(m))=\left ( ( \mathcal{C} \circ J_\G\circ \Gamma)^\ell(\pi_\nu(m)) \right )^{-1},
    \end{equation*}
where ${}^{-1}$ denotes the group inverse. Finally, we consider the
smooth map
\begin{equation}
   \label{eq:defsigma}
      \sigma : U\to P_\mu, \qquad \sigma (\pi_\nu(m))=\pi_\mu  \left
      ( 
     \alpha (\pi_\nu(m)) \cdot \Gamma (\pi_\nu(m))  \right ).
    \end{equation}

 Our proof that  $\phi$ is smooth consists in showing that 
 $\phi$ coincides with $\sigma$ on $U$, namely 
    \begin{equation}
    \label{eq:eqtoshow}
        \sigma (\pi_\nu(m))= \phi(\pi_\nu(m)), \qquad \mbox{for all
        $\pi_\nu(m)\in U$.}
    \end{equation}

  Fix $\pi_\nu(m)\in U$. Given that $J_\G \circ \Gamma(\pi_\nu(m))\in \Ag\cdot \mu$, there exists $a_0\in \Ag$ such
  that 
\begin{equation*}
  \mu=   \Ad^*_{a_0^{-1}} \left (  J_\G \circ \Gamma(\pi_\nu(m)) \right ).
    \end{equation*}
    On the one hand, in view of the definition \eqref{eq:aux3-appendix} of $\phi$, the above identity implies 
    \begin{equation}
    \label{eq:auxphivalue}
  \phi (\pi_\nu(m))=\pi_\mu (a_0\cdot \Gamma(\pi_\nu(m))).
    \end{equation}

On the other hand, we can write $\Ad^*_{a_0}\mu= J_\G \circ \Gamma (\pi_\nu(m))$, and using the definition of the lift of a map, we get
\begin{equation*}
\begin{split}
    \mathcal{C}_\mu \left ( \Ad^*_{a_0} \mu\right ) &= \mathcal{C}_\mu\circ  J_\G \circ \Gamma \left (\pi_\nu(m)) \right ) \\
     & = \tau  \left (  (\mathcal{C}_\mu\circ  J_\G \circ \Gamma)^\ell (\pi_\nu(m))  \right ).
\end{split}
    \end{equation*}
    By Eq.~\eqref{eq:auxThmASmoothness2}, the above equality implies the existence
    of $g\in \G_\mu$ such that
    \begin{equation*}
        a_0^{-1}g^{-1}=  (\mathcal{C}_\mu\circ  J_\G \circ \Gamma)^\ell (\pi_\nu(m)). 
    \end{equation*}

Therefore, $\alpha (\pi_\nu(m)) = ga_0$. Following the definition \eqref{eq:defsigma} of $\sigma$, and using that $g\in \G_\mu$, 
we have
  \begin{equation*}
       \sigma(\pi_\nu(m)) = \pi_\mu \left ( g a_0 \cdot \Gamma
       (\pi_\nu(m)) \right )= \pi_\mu \left (  a_0 \cdot \Gamma
       (\pi_\nu(m)) \right ).
    \end{equation*}
Comparing this with Eq.~\eqref{eq:auxphivalue} proves Eq.~\eqref{eq:eqtoshow} as required.
\end{enumerate}
\end{proof}

\section*{Acknowledgements}
We are grateful to H. Dullin for
a conversation that led to Remark \ref{rmk:2body}. ABD is thankful to the Department of Mathematics `Tullio Levi-Civita' at the University of Padua for its hospitality
during his visits in 2023 and 2024. During the 2023 visit, a question posed by D. Barilari regarding the dimension of coadjoint orbits during ABD’s talk served as the initial spark for this work. ABD would like to express his sincere gratitude to E. Le Donne for introducing the concept of a metabelian group and to N. Paddeu for the insightful conversations that inspired this work. He also thanks his advisor, R. Montgomery, for providing a fundamental insight at the heart of this work: \enquote{for connected Lie groups, the study of symplectic reduction can be reduced to the Lie algebra.}  We also thank one of the
referees for pointing out reference  \cite{Mykytyuk2008663} on the
stages hypothesis, and T. Ratiu for sharing his personal notes
working out some of the details of \cite{Mykytyuk2008663}.

\section*{Funding}

 LGN acknowledges support from the project MIUR-PRIN 2022FPZEES {\em Stability in Hamiltonian dynamics and beyond}.

\bibliographystyle{plain}
\bibliography{bibli.bib}

\end{document}